\newif\ifdraft
\definecolor{labelkey}{gray}{0.5}
\newlength{\myarrowsize} 
\newenvironment{diagram*}[2]{%
\[%
\begin{tikzpicture}[>=cmto,baseline=(current bounding box.center),%
	to/.style={->,font=\scriptsize,cap=round},%
	into/.style={cmhook->,font=\scriptsize,cap=round},%
	onto/.style={-cmonto,font=\scriptsize,cap=round},%
	math/.style={matrix of math nodes, row sep=#2, column sep=#1,%
		text height=1.5ex, text depth=0.25ex}]%
}{%
\end{tikzpicture}%
\]%
\ignorespacesafterend%
}
\newcommand{\Dmod}{\mathscr{D}}
\newcommand{\Mmod}{\mathcal{M}}
\newcommand{\Pmod}{\mathcal{P}}
\newcommand{\Nmod}{\mathcal{N}}
\newcommand{\derR}{\mathbf{R}}
\newcommand{\derL}{\mathbf{L}}
\newcommand{\ZZ}{\mathbb{Z}}
\newcommand{\QQ}{\mathbb{Q}}
\newcommand{\RR}{\mathbb{R}}
\newcommand{\CC}{\mathbb{C}}
\newcommand{\PP}{\mathbb{P}}
\DeclareMathOperator{\Spec}{Spec}
\DeclareMathOperator{\gr}{gr}
\DeclareMathOperator{\DR}{DR}
\DeclareMathOperator{\Pic}{Pic}
\newcommand{\shf}[1]{\mathscr{#1}}
\def\overbar#1#2#3{{%
	\setbox0=\hbox{\displaystyle{#1}}%
	\dimen0=\wd0
	\advance\dimen0 by -#2 
	\vbox {\nointerlineskip \moveright #3 \vbox{\hrule height 0.3pt width \dimen0}%
		\nointerlineskip \vskip 1.5pt \box0}%
}}
\newcommand{\shO}{\shf{O}}
\let\@@seccntformat\@seccntformat
\renewcommand*{\@seccntformat}[1]{%
  \expandafter\ifx\csname @seccntformat@#1\endcsname\relax
    \expandafter\@@seccntformat
  \else
    \expandafter
      \csname @seccntformat@#1\expandafter\endcsname
  \fi
    {#1}%
}
\newcommand*{\@seccntformat@subsection}[1]{%
  \textbf{\csname the#1\endcsname.}
}
\let\@paragraph\paragraph
\renewcommand*{\paragraph}[1]{%
	\vspace{0.3\baselineskip}%
	\@paragraph{\textit{#1}}%
}
\newtheorem{theorem}{Theorem} [subsection]
\newtheorem*{theorem*}{Theorem}
\newtheorem{lemma}[theorem]{Lemma}
\newtheorem*{lemma*}{Lemma}
\newtheorem{corollary}[theorem]{Corollary}
\newtheorem{proposition}[theorem]{Proposition}
\newtheorem*{proposition*}{Proposition}
\newtheorem{variant}[theorem]{Variant}
\theoremstyle{definition}
\newtheorem{definition}[theorem]{Definition}
\newtheorem*{definition*}{Definition}
\newtheorem{remark}[theorem]{Remark}
\newtheorem{question}[theorem]{Question}
\newtheorem{example}[theorem]{Example}
\newtheorem*{example*}{Example}
\newtheorem*{problem*}{Problem}
\theoremstyle{plain}
\newcommand{\theoremref}[1]{\hyperref[#1]{Theorem~\ref*{#1}}}
\newcommand{\lemmaref}[1]{\hyperref[#1]{Lemma~\ref*{#1}}}
\newcommand{\definitionref}[1]{\hyperref[#1]{Definition~\ref*{#1}}}
\newcommand{\propositionref}[1]{\hyperref[#1]{Proposition~\ref*{#1}}}
\newcommand{\conjectureref}[1]{\hyperref[#1]{Conjecture~\ref*{#1}}}
\newcommand{\corollaryref}[1]{\hyperref[#1]{Corollary~\ref*{#1}}}
\newcommand{\exampleref}[1]{\hyperref[#1]{Example~\ref*{#1}}}
\let\old@caption\caption
\renewcommand*{\caption}[1]{%
	\setcounter{figure}{\value{equation}}%
	\stepcounter{equation}%
	\old@caption{#1}\relax%
}
\newcounter{intro}
\newtheorem{intro-conjecture}[intro]{Conjecture}
\newtheorem{intro-corollary}[intro]{Corollary}
\newtheorem{intro-theorem}[intro]{Theorem}
\newcommand{\parref}[1]{\hyperref[#1]{\S\ref*{#1}}}
\newcommand*\if@single[3]{%
  \setbox0\hbox{${\mathaccent"0362{#1}}^H$}%
  \setbox2\hbox{${\mathaccent"0362{\kern0pt#1}}^H$}%
  \ifdim\ht0=\ht2 #3\else #2\fi
  }
\newcommand*\rel@kern[1]{\kern#1\dimexpr\macc@kerna}
\newcommand*\widebar[1]{\@ifnextchar^{{\wide@bar{#1}{0}}}{\wide@bar{#1}{1}}}
\newcommand*\wide@bar[2]{\if@single{#1}{\wide@bar@{#1}{#2}{1}}{\wide@bar@{#1}{#2}{2}}}
\newcommand*\wide@bar@[3]{%
  \begingroup
  \def\mathaccent##1##2{%
    \if#32 \let\macc@nucleus\first@char \fi
    \setbox\z@\hbox{$\macc@style{\macc@nucleus}_{}$}%
    \setbox\tw@\hbox{$\macc@style{\macc@nucleus}{}_{}$}%
    \dimen@\wd\tw@
    \advance\dimen@-\wd\z@
    \divide\dimen@ 3
    \@tempdima\wd\tw@
    \advance\@tempdima-\scriptspace
    \divide\@tempdima 10
    \advance\dimen@-\@tempdima
    \ifdim\dimen@>\z@ \dimen@0pt\fi
    \rel@kern{0.6}\kern-\dimen@
    \if#31
      \overline{\rel@kern{-0.6}\kern\dimen@\macc@nucleus\rel@kern{0.4}\kern\dimen@}%
      \advance\dimen@0.4\dimexpr\macc@kerna
      \let\final@kern#2%
      \ifdim\dimen@<\z@ \let\final@kern1\fi
      \if\final@kern1 \kern-\dimen@\fi
    \else
      \overline{\rel@kern{-0.6}\kern\dimen@#1}%
    \fi
  }%
  \macc@depth\@ne
  \let\math@bgroup\@empty \let\math@egroup\macc@set@skewchar
  \mathsurround\z@ \frozen@everymath{\mathgroup\macc@group\relax}%
  \macc@set@skewchar\relax
  \let\mathaccentV\macc@nested@a
  \if#31
    \macc@nested@a\relax111{#1}%
  \else
    \def\gobble@till@marker##1\endmarker{}%
    \futurelet\first@char\gobble@till@marker#1\endmarker
    \ifcat\noexpand\first@char A\else
      \def\first@char{}%
    \fi
    \macc@nested@a\relax111{\first@char}%
  \fi
  \endgroup
}
\newcommand{\I}{\mathcal{I}}
\def\ZZ{{\mathbf Z}}
\def\CC{{\mathbf C}}
\def\AAA{{\mathbf A}}
\def\RR{{\mathbf R}}
\def\QQ{{\mathbf Q}}
\def\PP{{\mathbf P}}
\def\Pic{{\rm Pic}}
\begin{document}

\vspace{\baselineskip}

\title{Hodge ideals for $\QQ$-divisors: birational approach}

\author[M. Musta\c{t}\v{a}]{Mircea~Musta\c{t}\u{a}}
\address{Department of Mathematics, University of Michigan,
Ann Arbor, MI 48109, USA}
\email{{\tt mmustata@umich.edu}}

\author[M.~Popa]{Mihnea~Popa}
\address{Department of Mathematics, Northwestern University, 
2033 Sheridan Road, Evanston, IL
60208, USA} \email{{\tt mpopa@math.northwestern.edu}}

\thanks{MM was partially supported by NSF grant DMS-1701622; MP was partially supported by NSF grant
DMS-1700819.}

\subjclass[2010]{14F10, 14J17, 32S25, 14F17}

\begin{abstract}
We develop the theory of Hodge ideals for $\QQ$-divisors by means of log resolutions, extending our 
previous work on reduced hypersurfaces. We prove local (non-)triviality criteria and a global vanishing 
theorem, as well as other analogues of standard results from the theory of multiplier ideals, and we derive 
a new local vanishing theorem. The connection with the $V$-filtration is analyzed in a sequel.
\end{abstract}

\maketitle

\makeatletter
\newcommand\@dotsep{4.5}
\def\@tocline#1#2#3#4#5#6#7{\relax
  \ifnum #1>\c@tocdepth 
  \else
    \par \addpenalty\@secpenalty\addvspace{#2}%
    \begingroup \hyphenpenalty\@M
    \@ifempty{#4}{%
      \@tempdima\csname r@tocindent\number#1\endcsname\relax
    }{%
      \@tempdima#4\relax
    }%
    \parindent\z@ \leftskip#3\relax
    \advance\leftskip\@tempdima\relax
    \rightskip\@pnumwidth plus1em \parfillskip-\@pnumwidth
    #5\leavevmode\hskip-\@tempdima #6\relax
    \leaders\hbox{$\m@th
      \mkern \@dotsep mu\hbox{.}\mkern \@dotsep mu$}\hfill
    \hbox to\@pnumwidth{\@tocpagenum{#7}}\par
    \nobreak
    \endgroup
  \fi}
\def\l@section{\@tocline{1}{0pt}{1pc}{}{\bfseries}}
\def\l@subsection{\@tocline{2}{0pt}{25pt}{5pc}{}}
\makeatother

\tableofcontents

\section{Introduction}
In this paper we continue the study of Hodge ideals initiated in \cite{MP1}, \cite{MP2}, by considering an analogous theory 
for arbitrary $\QQ$-divisors. The emphasis here is on a birational definition and study of Hodge ideals, while the companion 
paper \cite{MP3} is devoted to a study based on their connection with the $V$-filtration, inspired by \cite{Saito-MLCT}. Both 
approaches turn out to provide crucial information towards a complete understanding of these objects.

Let $X$ be a smooth complex variety. If $D$ is reduced divisor on $X$, the Hodge ideals $I_k (D)$, with $k \ge 0$, 
are defined in terms of the Hodge filtration 
on the $\Dmod_X$-module $\shO_X(*D)$ of functions with poles of arbitrary order along $D$. Indeed, this 
$\Dmod_X$-module
underlies a mixed Hodge module on $X$, and therefore comes with a Hodge filtration $F_\bullet \shO_X (*D)$, which satisfies
$$F_k \shO_X(*D) = I_k (D) \otimes \shO_X\big( (k+1)D\big), \,\,\,\,\,\,{\rm for~all}\,\,\,\, k \ge 0.$$
See \cite{MP1} for details, and for an extensive study of the ideals $I_k (D)$.

Our goal here is to provide a similar construction and study in the general case. A natural device for dealing with the 
fact that fractional divisors are not directly related to Hodge theory is to use new objects derived from covering constructions.
Let $D$ be an arbitrary effective $\QQ$-divisor on $X$. Locally, we can write $D = \alpha H$, for 
some $\alpha \in \QQ_{> 0}$ and $H = {\rm div} (h)$, the divisor of a nonzero regular function; we also denote by $Z$ the support of $D$.
A well-known construction associates to this data a twisted version of the localization $\Dmod$-module above, namely
$$\Mmod (h^{-\alpha}) : = \shO_X (*Z) h^{-\alpha},$$
that is the rank $1$ free $\shO_X(*Z)$-module 
with generator the symbol $h^{-\alpha}$, on which a derivation $D$ of $\shO_X$ acts by
$$D(wh^{-\alpha}) :=\left(D(w) - \alpha w\frac{ D(h)}{h}\right)h^{-\alpha}.$$
It turns out that this $\Dmod_X$-module can be endowed with a natural filtration $F_k \Mmod (h^{-\alpha})$, with 
$k \ge 0$, which makes it a filtered direct summand of a $\Dmod$-module underlying a mixed Hodge module on $X$; see 
\S\ref{coverings}. This plays a role 
analogous to the Hodge filtration, and just as in the reduced case one can show that $F_k \Mmod (h^{-\alpha}) \subseteq 
\shO_X (kZ ) h^{-\alpha}$. This is done in \S\ref{smooth} and \S\ref{definition}, by first analyzing the case when $Z$ is a smooth divisor 
(in this case, if $\lceil D\rceil=Z$, then
the inclusion is in fact an equality). It is therefore natural to define the \emph{$k$-th Hodge ideal} of $D$ by the formula
$$F_k \Mmod (h^{-\alpha}) = I_k (D) \otimes_{\shO_X} \shO_X (kZ ) h^{-\alpha}.$$ 

Similarly to \cite{MP1}, one of our main goals here is to study Hodge ideals of $\QQ$-divisors by means of log resolutions. 
To this end, let $f\colon Y\to X$ be a log resolution of the pair $(X, D)$ that is an isomorphism over $U=X\smallsetminus Z$,  
and denote $g=h\circ f$. There is a filtered isomorphism
$$
\big(\Mmod(h^{-\alpha}), F \big)\simeq f_+\big(\Mmod(g^{-\alpha}), F\big).
$$
Denoting $G=f^*D$ and $E = {\rm Supp}(G)$, so that $E$ is a simple normal crossing divisor, 
it turns out that there exists a complex on $Y$:
$$C^{\bullet}_{g^{-\alpha}} (- \lceil G\rceil):\,\,0\to\shO_Y(-\lceil G\rceil)\otimes_{\shO_Y} \Dmod_Y\to\shO_Y(-\lceil G\rceil)\otimes_{\shO_Y}\Omega^1_Y(\log E)\otimes_{\shO_Y}\Dmod_Y$$
$$\to\ldots\to\shO_Y(-\lceil G\rceil)\otimes_{\shO_Y}\omega_Y(E)\otimes_{\shO_Y}\Dmod_Y\to 0,$$
which is placed in degrees $-n,\ldots,0$, whose differential is described in \S\ref{complex_for_SNC}.
This complex has a natural filtration given, for $k\ge 0$, by subcomplexes 
$$F_{k-n}C^{\bullet}_{g^{-\alpha}} (- \lceil G\rceil): = 
0 \rightarrow \shO_Y(-\lceil G\rceil)\otimes  F_{k-n} \Dmod_Y \rightarrow $$
$$\to  \shO_Y(-\lceil G\rceil)\otimes\Omega_Y^1(\log E) \otimes  F_{k-n+1} \Dmod_Y \rightarrow 
\cdots \to  \shO_Y(-\lceil G\rceil)\otimes \omega_Y(E) \otimes  F_k \Dmod_Y\rightarrow 0.$$

\noindent
Extending \cite[Proposition 3.1]{MP1}, we show in Proposition \ref{prop_SNC_complex} and Proposition \ref{Hodge_ideals_SNC} 
that there is a filtered quasi-isomorphism
$$
\big(C^{\bullet}_{g^{-\alpha}} (- \lceil G\rceil), F \big) \simeq \big(\Mmod_r (g^{-\alpha}), F \big),
$$
where $\Mmod_r (g^{-\alpha})$ is the filtered right $\Dmod_Y$-module associated to $\Mmod (g^{-\alpha})$.
Thus one can use $\big(C^{\bullet}_{g^{-\alpha}} (- \lceil G\rceil), F\big)$ as a concrete representative for computing the filtered 
$\Dmod$-module pushforward of $\big(\Mmod_r (g^{-\alpha}), F\big)$, hence for computing 
the ideals $I_k (D)$. More precisely, we have 
$$R^0f_*F_{k-n}\big(C^{\bullet}_{g^{-\alpha}} (- \lceil G\rceil)\otimes_{\Dmod_Y}\Dmod_{Y\to X}\big)\simeq
h^{-\alpha}\omega_X (kZ )\otimes_{\shO_X}I_k(D).$$
See Theorem \ref{formula_log_resolution} for a complete picture regarding this push-forward operation.

This fact, together with special properties of the filtration on $\Dmod$-modules underlying mixed Hodge modules, leads to 
our main results on Hodge ideals, which are collected in the following:

\begin{intro-theorem}\label{main_list}
In the set-up above, the Hodge ideals $I_k (D)$ satisfy:

\noindent
(i) $I_0 (D)$ is the multiplier ideal $\I \big((1 - \epsilon)D\big)$, 
so in particular $I_0 (D) = \shO_X$ if and only if the pair $(X, D)$ is log canonical; see \S\ref{scn:I_0}.

\noindent
(ii) If $Z$ has simple normal crossings, then 
$$I_k (D) = I_k (Z) \otimes \shO_X (Z - \lceil D \rceil),$$
while $I_k (Z)$ can be computed explicitly as in \cite[Proposition~8.2]{MP1}; see \S\ref{SNC}.  In particular, if 
$Z$ is smooth, then $I_k (D) = \shO_X (Z - \lceil D \rceil)$ for all $k$; cf. also Corollary \ref{smoothness_equivalence}.

\noindent
(iii) The Hodge filtration is generated at level $n-1$, where $n = \dim X$, i.e. 
$$F_\ell \Dmod_X\cdot\big(I_k(D)\otimes \shO_X(kZ)h^{-\alpha}\big) = I_{k+\ell}(D)\otimes \shO_X \big((k+\ell)Z\big)h^{-\alpha}$$
for all $k \ge n-1$ and $\ell\ge 0$; see \S\ref{scn:level}.

\noindent
(iv) There are non-triviality criteria for $I_k (D)$ at a point $x \in D$ in terms of the multiplicity of $D$ at $x$; 
see \S\ref{nontriviality}.

\noindent
(v) If $X$ is projective, $I_k(D)$ satisfy a vanishing theorem analogous to Nadel Vanishing for multiplier ideals; 
see \S\ref{vanishing}.

\noindent
(vi) If $Y$ is a smooth divisor in $X$ such that $Z|_Y$ is reduced, then $I_k (D)$ satisfy 
$$I_k (D|_Y) \subseteq I_k(D) \cdot \shO_Y,$$
with equality when $Y$ is general; see \S\ref{scn:restriction} for a more general statement.

\noindent
(vii) If $ X \to T$ is a smooth family with a section $s\colon T \to X$, and $D$ is a relative divisor on $X$ that satisfies a suitable condition
(see \S\ref{scn:semicontinuity} for the precise statement) then 
$$\{ t \in T ~|~ I_k (D_t) \not\subseteq \frak{m}_{s(t)}^q\}$$
is an open subset of $T$, for each $q \ge 1$.

\noindent
(viii) If $D_1$ and $D_2$ are $\QQ$-divisors with supports $Z_1$ and $Z_2$, such that $Z_1 +  Z_2$ is also reduced, then  the subadditivity property
$$I_k (D_1 + D_2) \subseteq I_k (D_1)\cdot I_k (D_2)$$
holds; see \S\ref{scn:subadditivity} for a more general statement.
\end{intro-theorem}

For comparison, the list of properties of Hodge ideals in the case when $D$ is reduced is summarized in \cite[\S4]{Popa}. While much of the story carries over to the setting of $\QQ$-divisors -- besides of course the connection with the classical Hodge theory of the complement $U = X \smallsetminus D$, which only makes sense in the reduced case -- there are a few significant points where the picture becomes more intricate. For instance, the bounds for the generation level of the Hodge filtration can become worse.  
Moreover, we do not know 
whether the inclusions $I_k (D) \subseteq I_{k-1} (D)$ continue to hold for arbitrary $\QQ$-divisors. New phenomena appear as well: unlike in the case of multiplier ideals, for 
rational numbers $\alpha_1 < \alpha_2$, usually the ideals $I_k (\alpha_1 Z)$ and $I_k (\alpha_2 Z)$ cannot be 
compared for $k \ge 1$; see for instance Example \ref{parameter_dependence}.

It turns out however that most of these issues disappear if one works modulo the ideal of the hypersurface, at least for 
rational multiples of a reduced divisor.
This, as well as other basic facts, is addressed in the sequel \cite{MP3}, which 
studies Hodge ideals from a somewhat different point of view, namely by comparing them to
the (microlocal) $V$-filtration induced on $\shO_X$ by $h$. This is inspired by the work of Saito \cite{Saito-MLCT} in the reduced case. In the statement below we summarize some of these properties, which complement the results in Theorem \ref{main_list}, but which we do not know how to obtain with the methods of this paper.

\begin{intro-theorem}\cite{MP3}\label{other_paper}
Let $D = \alpha Z$, where $Z$ is a reduced divisor and $\alpha \in \QQ_{> 0}$. Then the following hold:
\begin{enumerate}
\item $I_k (D) + \shO_X(-Z) \subseteq I_{k-1} (D) + \shO_X(-Z)$ for all $k$.
\item If $\alpha \in (0 , 1]$, then $I_k (D) = \shO_X \iff k \le \widetilde{\alpha}_Z - \alpha$, where 
$\widetilde{\alpha}_Z$  is the negative of the largest root of the reduced Bernstein-Sato polynomial of $Z$.
\item If $I_{k-1}(D) = \shO_X$ (we say that $(X, D)$ is $(k-1)$-log canonical),
then $I_{k+1} (D) \subseteq I_k (D)$.
\item Fixing $k$, there exists a finite set of rational numbers $0 = c_0 < c_1 < \cdots < c_s < c_{s+1} = 1$ such that 
for each $0 \le i \le s$ and each $\alpha \in (c_i, c_{i+1}]$ we have 
$$I_k (\alpha Z)\cdot\shO_Z = I_k (c_{i+1} Z)\cdot\shO_Z = {\rm constant}$$
and such that 
$$I_k (c_{i+1} Z)\cdot\shO_Z\subsetneq I_k (c_i Z)\cdot\shO_Z.$$
\end{enumerate}
\end{intro-theorem}

Going back to the description of Hodge ideals by means of log resolutions, the strictness of the Hodge filtration 
for the push-forwards of (summands of) mixed Hodge modules leads to the following local Nakano-type vanishing 
result for $\QQ$-divisors:

\begin{intro-corollary}\label{intro_local_vanishing}
Let $D$ be an effective $\QQ$-divisor on a smooth variety $X$ of dimension $n$, and let $f\colon Y\to X$ be a log resolution of $(X,D)$ that is an isomorphism over
$X\smallsetminus {\rm Supp}(D)$. If $E= (f^*D)_{{\rm red}}$, then
$$R^q f_*\big(\Omega_Y^p(\log E)\otimes_{\shO_Y} \shO_Y(-\lceil f^*D\rceil)\big)=0\quad\text{for}\quad p+q>n.$$
\end{intro-corollary}

Note that for $p =n$ this is the local vanishing for multiplier ideals \cite[Theorem 9.4.1]{Lazarsfeld}, since $E - \lceil f^*D\rceil = - [(1- \epsilon) f^*D]$ for $0 < \epsilon \ll 1$. In general, the statement extends the case of reduced $D$ in \cite[Corollary 3]{Saito-LOG} (cf. also \cite[\S{A.5}]{Saito-MLCT}). Unlike \cite[Theorem 32.1]{MP1} regarding that case, at the moment we are unable to prove this corollary via more elementary methods.

A different series of applications, given in \cite{MP3}, uses the results proved in this paper together with the relationship between Hodge ideals of $\QQ$-divisors and the $V$-filtration, in order to describe the behavior of the invariant $\widetilde{\alpha}_Z$ described in Theorem \ref{other_paper} (called the \emph{minimal exponent} of $Z$). For instance, the triviality criterion proved here as Proposition \ref{triviality_criterion} 
leads to a lower bound \cite[Corollary ~D]{MP3} for  $\widetilde{\alpha}_Z$ in 
terms of invariants on a log resolution, addressing a question of Lichtin and Koll\'{a}r. Moreover, the results
in Theorem \ref{main_list} (vi) and (vii), and Corollary \ref{good_bound}, lead to effective bounds and to restriction and semicontinuity statements for $\widetilde{\alpha}_Z$, in analogy with well-known properties of log canonical thresholds; for details see \cite[\S6]{MP3}.

\section{Hodge ideals via log resolutions, and first properties}

Let $X$ be a smooth complex algebraic variety.
Given an effective $\QQ$-divisor $D$ on $X$, our goal is to attach to $D$ ideal sheaves $I_k(D)$ for $k \ge 0$; when $D$ is a reduced divisor, these will coincide with the Hodge ideals in \cite{MP1}.

\subsection{A brief review of Hodge modules}\label{Hodge_modules}
A key ingredient for the definition of our invariants is Saito's theory of mixed Hodge modules. In what follows,
we give a brief presentation of the relevant objects, and recall a few facts that we will need.
For details, we refer to \cite{Saito-MHM}.

Given a smooth $n$-dimensional complex algebraic variety $X$, we denote by $\Dmod_X$ the sheaf of differential operators on $X$. 
This carries the increasing filtration $F_{\bullet}\Dmod_X$ by order of differential operators. A \emph{left} or \emph{right $\Dmod$-module}
is a left, respectively right, $\Dmod_X$-module, which is quasi-coherent as an $\shO_X$-module. There is an equivalence 
between the categories of left and right $\Dmod$-modules, which at the level of $\shO_X$-modules is given by
$$\Mmod\to \Nmod:=\Mmod\otimes_{\shO_X}\omega_X\quad\text{and}\quad \Nmod\to {\mathcal Hom}_{\shO_X}(\omega_X,\Nmod).$$
For example, this equivalence maps the left $\Dmod$-module $\shO_X$ to the right $\Dmod$-module $\omega_X$.
For a thorough introduction to the theory of $\Dmod$-modules, we refer to \cite{HTT}.

A \emph{filtered left} (or \emph{right}) \emph{$\Dmod$-module} 
is a $\Dmod$-module $\Mmod$, together with an increasing filtration $F=F_{\bullet}\Mmod$ that is compatible with the
order filtration on $\Dmod_X$ and which is \emph{good}, in a sense to be defined momentarily. A morphism of filtered $\Dmod$-modules is required to be compatible with the filtrations. The equivalence between left
and right $\Dmod$-modules extends to the categories of filtered modules, with the convention that 
$$F_{p-n}(\Mmod\otimes_{\shO_X}\omega_X)=F_p\Mmod\otimes_{\shO_X}\omega_X.$$
A filtration $F_{\bullet}\Mmod$ on a coherent $\Dmod$-module $\Mmod$ is \emph{good} if the corresponding graded object 
${\rm gr}_{\bullet}^F\Mmod:=\bigoplus_kF_k\Mmod/F_{k-1}\Mmod$ is locally finitely generated over ${\rm gr}_{\bullet}^F\Dmod_X$. 
We note that
every coherent $\Dmod$-module admits a good filtration, but this is far from being unique.

We now come to the key objects in Saito's theory, the \emph{mixed Hodge modules} from \cite{Saito-MHM}. Such an object is given by the data 
$M=(\Mmod,F,\Pmod ,\varphi, W)$, where:
\begin{enumerate}
\item[i)] $(\Mmod,F)$ is a filtered $\Dmod$-module, with $\Mmod$ a holonomic left (or right) $\Dmod$-module, with regular singularities; 
$F$ is the \emph{Hodge filtration} of $\Mmod$.
\item[ii)] $\Pmod$ is a perverse sheaf of $\QQ$-vector spaces on $X$.
\item[iii)] $\varphi$ is an isomorphism between $\Pmod_{\CC} = \Pmod \otimes_{\QQ} \CC$ and ${\rm DR}(\Mmod)$, i.e. 
the perverse sheaf corresponding to $\Mmod$ via the Riemann-Hilbert correspondence.
\item[iv)] $W$ is a finite, increasing filtration on $(\Mmod,F, \Pmod,\varphi)$, the \emph{weight filtration} of the mixed Hodge module.
\end{enumerate}
For a such an object to be a mixed Hodge module, it has to satisfy 
a complicated set of conditions of an inductive nature, which we do not discuss here.
The main reference for the basic definitions and results of this theory is \cite{Saito-MHM}; see also 
\cite{Saito-YPG} for an introduction.

Given a mixed Hodge module $(\Mmod,F,\Pmod,\varphi, W)$, we say that the filtered $\Dmod$-module $(\Mmod,F)$ is a \emph{Hodge $\Dmod$-module}
(or that it
\emph{underlies} a mixed Hodge module). In fact, this is the only piece of information that we will be concerned with in this article. 
The basic example of a mixed Hodge module is ${\mathbf Q}_X^H[n]$, the trivial one. In this case, the filtered $\Dmod$-module is the structure sheaf $\shO_X$, 
with the filtration such that ${\rm gr}_p^F\shO_X=0$ for all $p\neq 0$. The corresponding perverse sheaf is $\QQ_X [n]$ and the weight filtration is such that 
${\rm gr}_p^W\shO_X=0$ for $p\neq n$. 

The mixed Hodge modules on $X$ form an Abelian category, denoted ${\rm MHM}(X)$. Morphisms in this category are strict with respect to both the Hodge
and the weight filtration. The corresponding bounded derived category is denoted ${\bf D}^b\big({\rm MHM}(X)\big)$. 

Mixed Hodge modules satisfy Grothendieck's 6 operations formalism. The relevant fact for us is that to every morphism $f\colon X\to Y$ of 
smooth complex algebraic varieties we have a corresponding push-forward functor $f_+\colon {\bf D}^b\big({\rm MHM}(X)\big)\to {\bf D}^b\big({\rm MHM}(Y)\big)$
(this is denoted by $f_*$ in \cite{Saito-MHM}). Moreover, if
$g\colon Y\to Z$ is another such morphism, we have a functorial isomorphism $(g\circ f)_+\simeq g_+\circ f_+$. 

Regarding the push-forward functor for mixed Hodge modules, we note that on the level of $\Dmod$-modules, it coincides with the usual $\Dmod$-module push-forward. 
Moreover, if $f\colon X\to Y$ is proper and if we denote by ${\rm FM}(\Dmod_X)$ the category of filtered $\Dmod$-modules on $X$ (here it is convenient to work with right $\Dmod$-modules), then
Saito defined in \cite{Saito-MHP} a functor
$$f_+ \colon {\bf D}^b \big({\rm FM}(\Dmod_X)\big) \rightarrow {\bf D}^b \big({\rm FM}(\Dmod_Y)\big).$$
This is compatible with the usual direct image functor for right $\Dmod$-modules and it is used to define
the push-forward between the derived categories of mixed Hodge modules at the level of filtered complexes. 
With a slight abuse of notation, if $(\Mmod,F)$ underlies a mixed Hodge module $M$ on $X$ and if $f\colon X\to Y$
is an arbitrary morphism, then we write $f_+(\Mmod,F)$ for the object in ${\bf D}^b\big({\rm FM}(\Dmod_Y)\big)$ underlying $f_+M$.

An important feature of the push-forward of Hodge $\Dmod$-modules with respect to proper morphisms is \emph{strictness}. 
This says that if $f\colon X\to Y$ is proper and $(\Mmod,F)$ underlies a mixed Hodge module on $X$, then 
$f_+ (\Mmod, F)$ is strict 
as an object in ${\bf D}^b \big({\rm FM}(\Dmod_Y)\big)$ (and moreover, each $H^i f_+ (\Mmod, F)$ underlies a Hodge $\Dmod_Y$-module). 
This means that the natural mapping 
\begin{equation}\label{strictness_formula}
R^i f_* \big(F_k (\Mmod \overset{\derL}{\otimes}_{\Dmod_X} \Dmod_{X\to Y}) \big) \longrightarrow 
R^i f_* (\Mmod \overset{\derL}{\otimes}_{\Dmod_X} \Dmod_{X\to Y})
\end{equation}
is injective for every $i, k\in\ZZ$. 
Taking $F_kH^if_+(\Mmod,F)$ to be the image of this map, we get the filtration on $H^if_+(\Mmod,F)$.

The push-forward with respect to open embeddings is more subtle. For example, suppose that $Z$ is an effective divisor
on the smooth variety $X$ and $j\colon U=X\smallsetminus Z\hookrightarrow X$ is the corresponding open immersion. 
Recall that $\shO_X(*Z)$
is the push-forward $j_*\shO_U$; on a suitable affine open neighborhood $V$ of a given point in $X$, this is given by localizing $\shO_X(V)$ at 
an equation defining $Z\cap V$ in $V$.
$\shO_X(*Z)$ has a natural left $\Dmod$-module structure induced by the canonical $\Dmod$-module structure on $\shO_X$. In fact, as such
we have $\shO_X(*Z)\simeq j_+\shO_U$  (in general, for a $\Dmod_U$-module $\Mmod$, the $\Dmod$-module push-forward
 $j_+\Mmod$ agrees with $j_*\Mmod$, with the induced $\Dmod_X$-module structure).
We thus see that $\shO_X(*Z)$ carries a canonical filtration such that the corresponding
filtered $\Dmod$-module underlies $j_+\QQ_U^H[n]$. This filtration is the one that leads to the Hodge ideals studied in \cite{MP1}.

\subsection{Filtered $\Dmod$-modules associated to $\QQ$-divisors}\label{coverings}
Let $X$ be a smooth complex algebraic variety, with $\dim(X)=n$. The ideals we associate to effective $\QQ$-divisors on $X$ arise from certain Hodge $\Dmod$-modules. 
The $\Dmod$-modules themselves have been extensively studied: these are the 
$\Dmod$-modules attached to rational powers of functions on $X$. We proceed to recall their definition.

Consider a nonzero $h\in\shO_X(X)$ and $\beta\in\QQ$. We denote by $Z$ the reduced divisor on $X$ with the same support as
$H={\rm div}(h)$ and let $j\colon U=X\smallsetminus {\rm Supp}(Z)\hookrightarrow X$ be the inclusion map. 
 We consider the left $\Dmod_X$-module 
$\Mmod(h^{\beta})$, which is a rank 1 free $\shO_X(*Z)$-module 
with generator the symbol $h^{\beta}$, on which a derivation $D$ of $\shO_X$ acts by
$$D(wh^{\beta}) :=\left(D(w) +  w\frac{\beta\cdot D(h)}{h}\right)h^{\beta}.$$
We will denote the corresponding right $\Dmod_X$-module by $\Mmod_r(h^{\beta})$.
This can be described as $h^{\beta}\omega_X(*Z)$, an $\shO_X$-module isomorphic to $\omega_X(*Z)$,
and such that if $x_1,\ldots,x_n$ are local coordinates, then 
$$(h^{\beta}w dx_1\cdots dx_n)\partial_i=- h^{\beta}\left(\frac{\partial w}{\partial x_i}+ w\frac{\beta}{h}\cdot \frac{\partial h}{\partial x_i}\right)dx_1\cdots dx_n$$
for every $i$ with $1\leq i\leq n$.

\begin{remark}\label{case_ell_1}
When $\beta\in\ZZ$, we have a canonical isomorphism of left $\Dmod_X$-modules
\begin{equation}\label{eq_case_ell_1}
\Mmod(h^{\beta})\simeq \shO_X(*Z),\,\,\,\,\,\,wh^{\beta}\to w \cdot h^{\beta},
\end{equation}
where on the localization $\shO_X(*Z)$ we consider the natural $\Dmod_X$-module structure induced from $\shO_X$. Note that $\shO_X(*Z)$
is also the $\Dmod$-module push-forward $j_+ \shO_U$.
\end{remark}

\begin{remark}\label{eq_renormalization_0}
For every positive integer $m$, we have a canonical isomorphism of  left $\Dmod_X$-modules
$$\Mmod(h^{\beta})\simeq\Mmod\big((h^m)^{\beta /m}\big),\,\,\,\,\,\,wh^{\beta}\to w(h^m)^{\beta/m}.$$
\end{remark}

\begin{remark}\label{twist_individual}
We can define, more generally, left $\Dmod$-modules $\Mmod(h_1^{\beta_1}\cdots h_r^{\beta_r})$, for 
nonzero regular functions $h_1,\ldots,h_r\in\shO_X(X)$ and rational numbers $\beta_1,\ldots,\beta_r$. 
If $\ell_i$ are positive integers such that
$\beta_i/\ell_i=\beta$ for all $i$ and if $h=\prod_ih_i^{\ell_i}$, then we have an isomorphism of left $\Dmod_X$-modules
$$\Mmod(h_1^{\beta_1}\cdots h_r^{\beta_r})\simeq \Mmod(h^{\beta}).$$
\end{remark}

\begin{remark}\label{twist_positive}
If $r$ is an integer, then we have an isomorphism of left $\Dmod_X$-modules
$$\Mmod(h^{\beta})\to \Mmod(h^{r + \beta}), \,\,\,\,\,\,wh^{\beta}\to (wh^{-r}) h^{r+\beta}.$$
\end{remark}

\medskip

Let now $D$ be an effective $\QQ$-divisor on $X$. We denote by $Z$ the reduced divisor with the same support as $D$. 
As above, we put $U=X\smallsetminus Z$ and let $j\colon U\hookrightarrow X$ be the inclusion map. 
We first assume that we can write $D=\alpha\cdot {\rm div}(h)$ for some nonzero $h\in\shO_X(X)$ and $\alpha\in\QQ_{>0}$ (this is of course 
always the case locally). To this data we can associate the $\Dmod_X$-module $\Mmod(h^{-\alpha})$; later it will be more convenient to 
consider equivalently (according to Remark \ref{twist_positive}) the $\Dmod_X$-module $\Mmod(h^{1-\alpha})$. 
This depends on the choice of $h$; however, if we replace $h$ by $h^m$ and $\alpha$ by $\alpha/m$, for some positive integer $m$,
the $\Dmod$-module does not change (see Remark~\ref{eq_renormalization_0}). In particular, we may always assume that $\alpha=1/\ell$,
for a positive integer $\ell$.

\begin{remark}\label{rmk_add_eff_div}
Suppose that $D'$ is a $\QQ$-divisor with the same support as $D$ and such that $D-D'={\rm div}(u)$, for some $u\in\shO_X(X)$.
Suppose that we can write
$D'=\frac{1}{\ell}\cdot {\rm div}(h')$ for some $h'\in\shO_X(X)$ and some positive integer $\ell$.
In this case we can also write
$D=\frac{1}{\ell}\cdot {\rm div}(h)$, where $h=u^{\ell}{h'}$, and  we
have an isomorphism of  $\Dmod_X$-modules
\begin{equation}\label{isom_add_1}
\Mmod(h^{-1/\ell})\to \Mmod({h'}^{-1/\ell}), \,\,\,\,\,\,gh^{-1/\ell} \to gu^{-1}{h'}^{-1/\ell}.
\end{equation}
\end{remark}

\medskip

Our first goal is to show that $\Mmod(h^{-\alpha})$ is canonically a filtered $\Dmod_X$-module. 
Let $\ell$ be a positive integer such that $\ell\alpha\in\ZZ$.
Consider the finite \'{e}tale map $p\colon V\to U$, where $V={\bf Spec}\,\shO_U[y]/(y^{\ell}-h^{-\ell\alpha})$.
Note that this fits in a Cartesian diagram
\begin{equation}\label{cart_diag1}
\begin{tikzcd}
V \rar\dar{p} & W\dar{q} \\
U\rar{j} & X,
\end{tikzcd}
\end{equation}
in which
$$W={\bf Spec}\,\shO_X[z]/(z^\ell-h^{\ell\alpha}),$$
such that the map $V\to W$ pulls $z$ back to $y^{-1}=y^{\ell-1}h^{\ell\alpha}$.

\begin{lemma}\label{lem_decomp1}
We have an isomorphism of left $\Dmod_X$-modules
\begin{equation}\label{eq1_lem_decomp1}
j_+p_+\shO_V\simeq\bigoplus_{i=0}^{\ell-1}\Mmod(h^{-i\alpha}),
\end{equation}
with the convention that the first summand is $\shO_X(*Z)$.
\end{lemma}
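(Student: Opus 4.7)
The plan is to decompose $p_+\shO_V$ on $U$ as an $\shO_U$-module using the explicit description of $V$, identify each summand with an $\Mmod_U(h^{-i\alpha})$ by a direct differentiation calculation, and then apply $j_+$ to pass from $U$ to $X$. The morphism $p$ is finite and étale: indeed $h$ is a unit on $U$, so $y^\ell-h^{-\ell\alpha}$ has invertible derivative $\ell y^{\ell-1}$ on $V$. Consequently $p_+\shO_V$ coincides with the sheaf-theoretic pushforward $p_*\shO_V$, equipped with the canonical $\Dmod_U$-module structure coming from the unique lift of derivations to the étale cover. As an $\shO_U$-module this splits as
\[
p_*\shO_V\;=\;\bigoplus_{i=0}^{\ell-1}\shO_U\cdot y^i.
\]

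The first nontrivial step is to identify this $\Dmod_U$-module structure with the twisted one on $\Mmod_U(h^{-i\alpha})=\shO_U\cdot h^{-i\alpha}$ (recall $Z\cap U=\emptyset$, so this is simply $\shO_U$ with a twisted action). Given a derivation $D$ of $\shO_U$, let $\widetilde{D}$ denote its unique lift to $\shO_V$. Differentiating the relation $y^\ell=h^{-\ell\alpha}$ yields $\ell y^{\ell-1}\widetilde{D}(y)=-\ell\alpha\, h^{-\ell\alpha-1}D(h)$, and using $y^{-(\ell-1)}=y\cdot h^{\ell\alpha}$ this simplifies to
\[
\widetilde{D}(y)\;=\;-\alpha\,y\cdot\frac{D(h)}{h},
\]
hence $\widetilde{D}(wy^i)=\bigl(D(w)-i\alpha\,w\,D(h)/h\bigr)y^i$. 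This is precisely the formula defining the $\Dmod_U$-action on $\Mmod_U(h^{-i\alpha})$ under the assignment $y^i\mapsto h^{-i\alpha}$, so we obtain a $\Dmod_U$-module isomorphism
\[
p_+\shO_V\;\simeq\;\bigoplus_{i=0}^{\ell-1}\Mmod_U(h^{-i\alpha}).
\]

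Finally, since $Z$ is a divisor, the open immersion $j\colon U\hookrightarrow X$ is affine, so $j_+$ agrees with the (underived) sheaf pushforward $j_*$ and commutes with finite direct sums. For each $i$ we have $j_*(\shO_U\cdot h^{-i\alpha})=\shO_X(*Z)\cdot h^{-i\alpha}=\Mmod_X(h^{-i\alpha})$, and for $i=0$ this is $\shO_X(*Z)$ by Remark~\ref{case_ell_1}. Putting these pieces together gives the desired decomposition.

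I do not expect a real obstacle: once $p$ is recognized as étale, everything reduces to the one derivation computation above and to the formal verification that étale pushforward coincides with $\Dmod$-module pushforward. The only point requiring care is keeping the twisted action straight — in particular remembering that $\widetilde{D}(y^i)$ has the opposite sign to what a naive reading of $y^i=h^{-i\alpha}$ might suggest, because $\alpha$ appears in the exponent of $h$ with a negative sign.
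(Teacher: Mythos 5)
Your proof is correct and follows essentially the same route as the paper's: recognize $p$ as finite \'etale, decompose $p_*\shO_V$ as $\bigoplus_i \shO\cdot y^i$, compute $\widetilde{D}(y)=-\alpha y\,D(h)/h$ by differentiating the defining relation $y^\ell=h^{-\ell\alpha}$, and match this with the twisted action on $\Mmod(h^{-i\alpha})$ via $y^i\mapsto h^{-i\alpha}$. You spell out the final $j_+$ step (affine open immersion, so $j_+=j_*$ commutes with finite sums) a bit more explicitly than the paper, which works directly on $X$, but the core computation is identical.
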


\begin{proof}
Since $p$ is finite \'{e}tale, it follows that we have a canonical isomorphism $\tau\colon p^*\Dmod_U\simeq\Dmod_V$, 
and for every $\Dmod_V$-module $\Mmod$ we have $p_+\Mmod\simeq p_*\Mmod$,
with the action of $\Dmod_U$ induced via the isomorphism $\tau$.

By mapping $gy^i$ to $gh^{-i\alpha}$, where $g$ is a section of $\shO_X$ and $0\leq i\leq\ell-1$, we obtain an isomorphism 
of $\shO_X$-modules as in
(\ref{eq1_lem_decomp1}). In order to see that this is an isomorphism of $\Dmod_X$-modules, consider a local derivation
$D$ of $\shO_X$ and note that since $y^{\ell}=h^{-\ell\alpha}$, by identifying $D$ with its pull-back to $V$ we have
$$D (y^i)=iy^{i-1}D(y)=-i\alpha y^i\frac{D(h)}{h},$$
which via our map corresponds to $D (h^{-i\alpha})$.
This implies the assertion.
\end{proof}

It follows from the lemma that the right-hand side of (\ref{eq1_lem_decomp1}) is the $\Dmod$-module corresponding to the 
mixed Hodge module push-forward $(j\circ p)_+\QQ_V^H[n]$. In particular, it 
 carries a canonical structure of filtered $\Dmod$-module.

\begin{remark}\label{rem_replace_by_multiple}
Let's see what happens if we replace $\ell$ by a multiple $m\ell$. Let $p_{\ell}\colon V_{\ell}\to U$ and $p_{m\ell}\colon V_{m\ell}\to U$ be the corresponding \'{e}tale covers.
Note that $$V_{m\ell}={\bf Spec}\,\shO_U[y]/(y^{\ell m}-h^{-\ell m\alpha})$$ decomposes as a disjoint union of $m$ copies of $V_{\ell}$, and thus we have an isomorphism of filtered $\Dmod_X$-modules (and a corresponding isomorphism of mixed Hodge modules)
\begin{equation}\label{eq_rem_replace_by_multiple}
j_+(p_{m\ell})_+\shO_{V_{m\ell}}\simeq 
\big(j_+(p_{\ell})_+\shO_{V_{\ell}}\big)^{\oplus m}.
\end{equation}
If $\eta$ is a primitive root of $1$ of order $\ell m$, and if on each side of 
(\ref{eq_rem_replace_by_multiple}) we consider the decompositions 
(\ref{eq1_lem_decomp1}), then the isomorphism maps 
$$h^{-i\alpha} \to \big(\eta^{is}h^{-c\ell\alpha}\cdot h^{-d\alpha}\big)_{0\leq s\leq m-1},$$
where we write $i=\ell c+d$, with $0\leq c\leq m-1$ and $0\leq d\leq\ell-1$.
\end{remark}

We can interpret the isomorphism in (\ref{eq1_lem_decomp1}) in terms of a suitable $\mu_\ell$-action, where $\mu_{\ell}$
is the group of $\ell$-th roots of $1$ in $\CC^*$. Note that we have a natural action of $\mu_{\ell}$ on $W$ such that
via the corresponding action on $\shO_W$, an element $\lambda\in\mu_{\ell}$ maps $z^i$ to $\lambda^iz^i$. 
If we let $\mu_{\ell}$ act trivially on $X$, then $q$ is an equivariant morphism (in fact, $q$ is the quotient morphism with respect
to the $\mu_{\ell}$-action). It is clear that $q^{-1}(Z)$ is fixed by the $\mu_{\ell}$-action and we have an induced $\mu_{\ell}$-action
on $W\smallsetminus q^{-1}(Z)=V$. This in turn induces a $\mu_{\ell}$-action on $j_+p_+\shO_V$ and the isomorphism in
(\ref{eq1_lem_decomp1}) corresponds to the isotypic decomposition of $j_+p_+\shO_V$, such that every 
$\lambda\in\mu_{\ell}$ acts on $\Mmod(h^{-i\alpha})$ by multiplication with $\lambda^{-i}$.

\begin{lemma}\label{decomp_filtration}
The filtration on $j_+p_+\shO_V$ is preserved by the $\mu_{\ell}$-action. Therefore we have an induced filtration on each 
$\Mmod(h^{-i\alpha})$ such that (\ref{eq1_lem_decomp1}) is an isomorphism of filtered $\Dmod$-modules.
\end{lemma}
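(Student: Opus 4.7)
The plan is to lift the $\mu_{\ell}$-action from the $\Dmod$-module level up to the category of mixed Hodge modules, and then invoke the fact that morphisms there automatically preserve the Hodge filtration. Since $p\colon V\to U$ is Galois \'{e}tale with group $\mu_{\ell}$, each $\lambda\in\mu_{\ell}$ yields an automorphism $\sigma_{\lambda}\colon V\to V$ over $U$, hence over $X$ via $j$. By functoriality of the canonical Hodge module structure under automorphisms, there is a tautological isomorphism $(\sigma_{\lambda})_+\QQ_V^H[n]\simeq\QQ_V^H[n]$ in $\MHM(V)$.

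Applying $(j\circ p)_+$ and using the equality $(j\circ p)\circ\sigma_{\lambda}=j\circ p$ together with the functorial isomorphism $(g\circ f)_+\simeq g_+\circ f_+$, I obtain for each $\lambda\in\mu_{\ell}$ an automorphism $\Phi_{\lambda}$ of $(j\circ p)_+\QQ_V^H[n]$ in $\mathbf{D}^b\bigl(\MHM(X)\bigr)$, with $\lambda\mapsto\Phi_{\lambda}$ a group homomorphism. The one substantive point to verify is that, when $\Phi_{\lambda}$ is viewed on the underlying $\Dmod$-module $j_+p_+\shO_V$, it coincides with the $\mu_{\ell}$-action used to define the isotypic decomposition (\ref{eq1_lem_decomp1}). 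This should be automatic, since both actions are induced by pullback along $\sigma_{\lambda}$: the latter sends $y\mapsto\lambda y$, hence acts on $y^i\leftrightarrow h^{-i\alpha}$ by multiplication by $\lambda^i$, which is (up to sign convention) exactly the isotypic weight used in the decomposition.

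Granting this identification, the result is formal. Each $\Phi_{\lambda}$ is a morphism in $\mathbf{D}^b\bigl(\MHM(X)\bigr)$, so passes to a morphism of filtered $\Dmod$-modules and in particular satisfies $\Phi_{\lambda}(F_k)\subseteq F_k$ for every $k$; since $\Phi_{\lambda^{-1}}$ is its inverse, one has $\Phi_{\lambda}(F_k)=F_k$. Thus $F_k$ is $\mu_{\ell}$-stable, and as a representation of a finite cyclic group over a field of characteristic zero it decomposes as a direct sum of its isotypic components, namely
\[
F_k\big(j_+p_+\shO_V\big)=\bigoplus_{i=0}^{\ell-1}\bigl(F_k\cap\Mmod(h^{-i\alpha})\bigr).
\]
Setting $F_k\Mmod(h^{-i\alpha}):=F_k\cap\Mmod(h^{-i\alpha})$ realizes (\ref{eq1_lem_decomp1}) as an isomorphism of filtered $\Dmod$-modules, as desired. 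The main obstacle is the compatibility between the two incarnations of the $\mu_{\ell}$-action; once this is unwound, the strictness of morphisms in $\MHM(X)$ does all the work.
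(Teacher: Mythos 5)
Your proof is correct, and it is a genuinely different route from the one taken in the paper. The paper proves the lemma by explicit computation: it replaces $W$ first by the disjoint union of its irreducible components and then by a $\mu_{\ell}$-equivariant log resolution $Y$, identifies $j_+p_+\shO_V$ with $g_+\shO_Y(*E)$, resolves $\omega_Y(*E)$ by the filtered complex $C^\bullet$ with terms $\Omega_Y^{i+n}(\log E)\otimes_{\shO_Y}\Dmod_Y$, and then observes that the $\mu_\ell$-action on $Y$ fixes $E$ and so acts on $C^\bullet$ compatibly with the filtration, from which the stability of each $F_k$ follows. Your argument instead lifts the group action to an action by automorphisms of the mixed Hodge module $(j\circ p)_+\QQ_V^H[n]$ itself, using the equivariance $(j\circ p)\circ\sigma_\lambda=j\circ p$ and the functoriality $(g\circ f)_+\simeq g_+\circ f_+$, and then quotes the structural fact that a morphism of mixed Hodge modules preserves the Hodge filtration (note that for an automorphism with inverse $\Phi_{\lambda^{-1}}$, mere filtration-compatibility already yields $\Phi_\lambda(F_k)=F_k$; strictness is not actually required). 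Your approach is shorter and avoids the equivariant resolution entirely, while the paper's more concrete route has the side benefit of simultaneously setting up the log-resolution complex $C^\bullet_{g^{-\alpha}}(-\lceil G\rceil)$ that underlies essentially all of the later computations (Propositions \ref{prop_SNC_complex}, \ref{Hodge_ideals_SNC}, Theorem \ref{formula_log_resolution}). The one place your write-up should be slightly more careful is the compatibility check you flag as ``the substantive point'': the action you obtain from the MHM formalism is by pushforward along $\sigma_\lambda$, i.e.\ by $(\sigma_\lambda^{-1})^*$, which sends $y^i\mapsto\lambda^{-i}y^i$ and hence matches the paper's convention that $\lambda$ acts on $\Mmod(h^{-i\alpha})$ by $\lambda^{-i}$; but since this is only a relabeling of eigenspaces and every eigenspace is one of the $\Mmod(h^{-i\alpha})$, the sign convention has no effect on the argument.
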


\begin{proof}
One way to see this is by using a suitable equivariant resolution of $W$. Let $W'$ be the disjoint union of the irreducible components of $W$ and
$q'\colon W'\to W$ the canonical morphism. It is clear that the $\mu_{\ell}$-action on $W$ induces an action on $W'$ such that $q'$ is equivariant.
Since $V$ is contained in the smooth locus of $W$, it has an open immersion into $W'$. 
We use equivariant resolution of singularities to construct a $\mu_{\ell}$-equivariant morphism 
$\varphi\colon Y\to W'$ that is an isomorphism over $V$ and such that $(q\circ q'\circ\varphi)^*(Z)$ is a divisor with simple normal crossings.  
Let $g=q\circ q'\circ \varphi$. If $E$ is the reduced, effective divisor supported on $g^{-1}(Z)$, then 
we have an isomorphism of filtered $\Dmod$-modules (induced by a corresponding isomorphism of mixed Hodge modules)
\begin{equation}\label{eq_ism_filtered}
j_+p_+\shO_V\simeq g_+\widetilde{j}_+\shO_V\simeq g_+\shO_Y(*E),
\end{equation}
where $\widetilde{j}\colon Y\smallsetminus {\rm Supp}(E)\hookrightarrow Y$ is the inclusion map.

We can deduce the assertion in the lemma from an explicit
computation of the filtration on $j_+p_+\shO_V$ via the isomorphism (\ref{eq_ism_filtered}), as follows. First, since we deal with
$\Dmod$-module push-forward, it is more convenient to work with right $\Dmod$-modules. We will thus compute $g_+\omega_Y(*E)$, where
$\omega_Y(*E)$ is the filtered right $\Dmod$-module corresponding to $\shO_Y(*E)$. 

Since $E$ is a simple normal crossing divisor, $\omega_Y(*E)$ has a resolution by a complex $C^\bullet$ 
of filtered right $\Dmod_Y$-modules
$$0\longrightarrow C^{-n}\longrightarrow\cdots\longrightarrow C^0\longrightarrow 0,$$
where $C^{i}=\Omega_Y^{i+n}(\log E)\otimes_{\shO_Y}\Dmod_Y$, with the filtration given by
$$F_{k-n}C^i=\Omega_Y^{i+n}(\log E)\otimes_{\shO_Y}F_{k+i}\Dmod_Y.$$ 
For a description of the maps in this complex, see the beginning of \S\ref{complex_for_SNC} below;
a proof
of the fact that it resolves $\omega_Y(*E)$ is given in \cite[Proposition~3.1]{MP1}. We can thus compute  $F_kg_+\omega_Y(*E)$ as the image of the injective map
$$R^0g_*\big(F_k(C^{\bullet}\otimes_{\Dmod_Y}\Dmod_{Y\to X})\big)\to R^0g_*(C^{\bullet}\otimes_{\Dmod_Y}\Dmod_{Y\to X})=g_+\omega_Y(*E).$$
Since $g$ is equivariant and the action of $\mu_{\ell}$ on $Y$ induces an action on $E$ (in fact, it fixes $E$), the above description implies that each
$F_kg_+\omega_Y(*E)$ is preserved by the $\mu_{\ell}$-action.
\end{proof}

\begin{remark}\label{rmk_filtration_first_piece}
We note that the filtration on $j_+p_+\shO_V$ induces the canonical filtration on the first summand $\shO_X(*Z)$.
Indeed, on $U$ we have a morphism of mixed Hodge modules $\QQ_U^H[n]\to p_+\QQ_V^H[n]$. Applying $j_+$ and only considering the underlying filtered $\Dmod$-modules, we obtain a morphism $j_+\shO_U\to j_+p_+\shO_V$, which is an isomorphism onto the first summand.
\end{remark}

\begin{definition}
Given $\alpha>0$, choose $\ell\geq 2$ such that $\ell\alpha\in\ZZ$. In this case $\Mmod(h^{-\alpha})$ appears as the second summand in the decomposition
(\ref{eq1_lem_decomp1}). We define the filtration 
$$F_k \Mmod(h^{-\alpha}) \,\,\,\,\,\,{\rm for} \,\,\,\,\,\, k \ge 0$$ 
to be the filtration induced from the canonical filtration on $j_+p_+\shO_V$. 
It is straightforward to see, using the discussion in Remark~\ref{rem_replace_by_multiple} that this filtration does not change if we replace $\ell$
by a multiple; therefore it is independent of $\ell$. Moreover, we note that if $\alpha$ is an integer, using the same Remark~\ref{rem_replace_by_multiple}, 
the isomorphism $\Mmod(h^{-\alpha})\simeq\shO_X(*Z)$ is an isomorphism of filtered $\Dmod$-modules.
\end{definition}

In this definition, a priori different covers have to be considered for each of the summands $\Mmod(h^{-i\alpha})$. However, 
we have:

\begin{lemma}\label{lem_decomp1_v2}
With the filtration defined above, the isomorphism (\ref{eq1_lem_decomp1}) is an isomorphism of filtered $\Dmod$-modules. 
\end{lemma}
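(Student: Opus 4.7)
For $i=0$ this is Remark \ref{rmk_filtration_first_piece} and for $i=1$ it holds by construction, so fix $i \in \{2,\ldots,\ell-1\}$ and set $\beta = i\alpha$. Let $F^A_\bullet$ denote the filtration inherited by the $i$-th summand of $j_+(p_\ell)_+\shO_{V_\ell}$, and $F^B_\bullet$ the canonical filtration on $\Mmod(h^{-\beta})$; the goal is $F^A = F^B$. The idea is to exhibit both filtrations as arising by Saito's mixed Hodge module pushforward $j_+$ of one and the same filtered $\Dmod_U$-module, then conclude by functoriality.

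The first move is to rewrite $F^B$ in a form parallel to $F^A$. Since $\ell\beta = i(\ell\alpha) \in \ZZ$, and the canonical filtration on $\Mmod(h^{-\beta})$ is independent of the cover degree (as remarked in the definition, via Remark \ref{rem_replace_by_multiple}), we may define $F^B$ using the cover
\[
V''_\ell = \Spec \shO_U[z]/(z^\ell - h^{-i\ell\alpha})
\]
as the filtration on the second summand of $j_+(p''_\ell)_+ \shO_{V''_\ell} \simeq \bigoplus_{j=0}^{\ell-1} \Mmod(h^{-j\beta})$. In this presentation both $F^A$ and $F^B$ are $\mu_\ell$-isotypic summands of mixed Hodge module pushforwards $(j \circ \pi)_+ \QQ_W^H[n]$ for a finite étale cover $\pi \colon W \to U$, with $W = V_\ell$ for $F^A$ and $W = V''_\ell$ for $F^B$. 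Because $\mu_\ell$ acts by Hodge module automorphisms (as in the proof of Lemma \ref{decomp_filtration}) and $j_+$ commutes with this action and with direct sums, each of $F^A,F^B$ equals $j_+$ applied to its own restriction to $U$.

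It therefore suffices to verify that $F^A|_U = F^B|_U$ as filtrations on $\Mmod(h^{-\beta})|_U$. On $U$, each is an isotypic summand of the filtered $\Dmod_U$-module $\pi_*\shO_W$ attached to the trivial MHM $\QQ_W^H[n]$ on a smooth finite étale cover of $U$; since this trivial MHM has Hodge filtration concentrated in degree zero, i.e.\ $F_0\shO_W = \shO_W$ and $F_{-1}\shO_W = 0$, and this concentration is preserved both by finite étale pushforward and by passage to $\mu_\ell$-isotypic direct summands, both restrictions $F^A|_U$ and $F^B|_U$ reduce to the filtration with $F_0 = \Mmod(h^{-\beta})|_U$ and $F_{-1} = 0$. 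Applying $j_+$ to this common filtered $\Dmod_U$-module then yields $F^A = F^B$. The main technical point is the uniqueness of the Hodge filtration produced by Saito's $j_+$ on a given filtered $\Dmod_U$-module underlying an MHM---that equal filtered restrictions to $U$ force equal filtrations on $X$---which is part of Saito's formalism and is what upgrades the identity of restrictions to the desired equality of filtrations.
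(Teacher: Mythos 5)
Your argument takes a genuinely different route from the paper's, and it has a gap at the crucial step. The paper's proof is short and concrete: it constructs a finite morphism $\psi\colon V\to V'$ of schemes over $U$, pulling back $y$ to $y^i$, where $V'$ is the degree-$\ell$ cover used in the definition of the filtration on $\Mmod(h^{-i\alpha})$. The adjunction unit gives a canonical morphism of mixed Hodge modules $\QQ_{V'}^H[n]\to\psi_+\QQ_V^H[n]$; applying $j_+p'_+$ and passing to filtered $\Dmod$-modules yields a morphism $j_+p'_+\shO_{V'}\to j_+p_+\shO_V$ that is the identity on the summand $\Mmod(h^{-i\alpha})$. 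Strictness of morphisms of mixed Hodge modules with respect to the Hodge filtration then forces the two filtrations on that summand to coincide. This is a standard, readily quotable fact.

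Your proof instead tries to dispense with any comparison morphism, arguing that both filtrations are ``$j_+$ applied to the same filtered $\Dmod_U$-module'' and invoking a uniqueness principle for Saito's open direct image. The difficulty is that the objects being compared, the $\mu_\ell$-isotypic summands $\Mmod(h^{-i\alpha})$, are not themselves mixed Hodge modules: the $\mu_\ell$-isotypic decomposition is taken over $\CC$ (or $\QQ(\zeta_\ell)$) and does not preserve the $\QQ$-rational structure, so Saito's $j_+$ and its functoriality statements for $\operatorname{MHM}$ do not directly apply to these summands. What you actually need is a determination of the Hodge filtration on the meromorphic extension purely from its filtered restriction to $U$ together with the $V$-filtration on the extended $\Dmod$-module -- essentially Saito's formula for the Hodge filtration of open direct images. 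That is true, but it is a substantial input; labeling it ``part of Saito's formalism'' and asserting that ``equal filtered restrictions to $U$ force equal filtrations on $X$'' hides the real content, and as stated (two sub-$\shO_X$-modules agreeing on a dense open set) the claim is obviously false without that input. The paper's morphism-plus-strictness argument sidesteps all of this by staying inside $\operatorname{MHM}(X)$ until the very last moment.
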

\begin{proof}
By Lemma~\ref{decomp_filtration}, we only need to show that for every $i$ with $0\leq i\leq \ell-1$, the filtration induced on $\Mmod(h^{-i\alpha})$ by that on $j_+p_+\shO_V$ coincides with the 
one given in the above definition. For $i=0$ this follows from Remark~\ref{rmk_filtration_first_piece}. If $i>0$, consider the cover used to define the filtration on 
$\Mmod(h^{-i\alpha})$, namely 
$$p'\colon V'={\bf Spec}\shO_U[y]/(y^{\ell}-h^{i\alpha\ell})\to U.$$ 
Note that we have a finite morphism
$\psi\colon V\to V'$ of varieties over $U$, that pulls-back $y$ to $y^i$. We have a canonical morphism of mixed Hodge modules
$\QQ_{V'}^H[n]\to \psi_+\QQ_V^H[n]$. Applying $j_+p'_+$ and passing to the underlying filtered $\Dmod$-modules, we obtain a morphism of filtered 
$\Dmod$-modules $j_+p'_+\shO_{V'}\to j_+p_+\shO_V$ that is the identity on the summand $\Mmod(h^{-i\alpha})$. This proves our claim.
\end{proof}

\begin{remark}\label{same_filtration_rescaling}
It is clear from definition that for every $\alpha>0$ and every positive integer $m$, the isomorphism
$$\Mmod(h^{-\alpha})\to \Mmod\big((h^m)^{-\alpha/m}\big), \quad gh^m\to g (h^m)^{-\alpha/m}$$
is an isomorphism of filtered $\Dmod$-modules. 
\end{remark}

\begin{remark}\label{rem_isom_comp_with_filtrations}
In the setting of Remark~\ref{rmk_add_eff_div},
the isomorphism (\ref{isom_add_1}) is an isomorphism of filtered $\Dmod_X$-modules.
This is clear if $\ell=1$, hence we assume $\ell\geq 2$.
Let
$p\colon V\to U$ and $p'\colon V'\to U$ be the canonical projections, where
$$V={\bf Spec}\,\shO_U[y]/(y^{\ell}-h)\quad\text{and}
\quad V'={\bf Spec}\,\shO_U[y]/(y^{\ell}-h').$$
We have an isomorphism $\varphi\colon V'\to V$ of schemes over $U$, where $\varphi^*(y)=uy$. 
This induces an isomorphism of filtered $\Dmod_X$-modules
$$j_+p_+\shO_V\simeq j_+p'_+\shO_{V'},$$
which via the identifications given by Lemma~\ref{lem_decomp1} is the direct sum
$$\bigoplus_{i=0}^{\ell-1}\Mmod(h^{-i/\ell})\simeq 
\bigoplus_{i=0}^{\ell-1}\Mmod({h'}^{-i/\ell})$$
of the isomorphisms (\ref{isom_add_1}). For $i=1$, we obtain our assertion.
\end{remark}

A special case of the above remark implies that for every $\alpha>0$ the isomorphism
$$\Mmod(h^{-\alpha})\to\Mmod(h^{-\alpha-1}),\quad gh^{-\alpha}\to (gh)h^{-\alpha-1}$$
is an isomorphism of filtered $\Dmod$-modules. We use this to put a structure of filtered
$\Dmod$-module on $\Mmod(h^{\beta})$ for every $\beta\in\QQ$, such that for every $r\in\ZZ$,
we have an isomorphism of filtered $\Dmod$-modules
$$\Mmod(h^{\beta})\to\Mmod(h^{\beta-r}),\quad gh^{\beta}\to (gh^r)h^{\beta-r}.$$
For example, we have have an isomorphism of filtered $\Dmod$-modules $\Mmod(h^0)\simeq \shO_X(*Z)$.

\begin{remark}\label{rem_indep_equations1}
Suppose that $h, \bar h \in\shO_X(X)$ are nonzero, and $\alpha,\bar{\alpha}\in\QQ_{>0}$ are 
such that we have the equality of $\QQ$-divisors
$$\alpha \cdot {\rm div}(h)=\bar{\alpha}\cdot {\rm div}(\bar{h}).$$
Let $\ell$ be  a positive integer such that $\ell\alpha,\ell\bar{\alpha}\in\ZZ$.
In this case there is $g\in\shO_X^*(X)$ such that $h^{\ell\alpha}=g{\bar{h}}^{\ell\bar{\alpha}}$.
 Suppose now that there exists $G\in\shO_X(X)$ such that $G^{\ell}=g$. (For example, this holds after pulling-back to the
\'{e}tale cover $\Spec \shO_X[z]/(z^{\ell}-g)$.)
In this case we have an isomorphism of filtered $\Dmod_X$-modules
$$\Phi\colon \Mmod(h^{-\alpha})\longrightarrow \Mmod(\bar{h}^{-\bar{\alpha}})$$
given by
$$\Phi(wh^{-\alpha})=wG^{-1}\bar{h}^{-\bar{\alpha}}.$$
Indeed, this follows from the definition of the filtrations and the isomorphism of schemes over $U$
$$\varphi\colon {\bf Spec} \shO_U[y]/(y^{\ell}-\bar{h}^{-\ell\bar{\alpha}})\to {\bf Spec} \shO_U[y]/(y^{\ell}-h^{-\ell\alpha})$$
that pulls-back $y$ to $G^{-1}y$. 
\end{remark}

\begin{remark}\label{rem_behavior_etale}
It is clear that the filtration on $\Mmod(h^{-\alpha})$ is compatible with restriction to open subsets.
More generally, it is compatible with smooth pullback, as follows.
Suppose that $h\in\shO_X(X)$ is nonzero and $\alpha \in \QQ$.
If $\varphi\colon Y\to X$ is a smooth morphism and $g=h\circ \varphi$, then there is an isomorphism of $\Dmod_Y$-modules
$$\Mmod(g^{-\alpha})\simeq\varphi^*\Mmod(h^{-\alpha}),$$
such that for every $k$ we have
$$F_k\Mmod(g^{-\alpha})\simeq\varphi^*F_k\Mmod(h^{-\alpha}).$$
Indeed, choose $\ell\geq 2$ such that $\ell\alpha\in\ZZ$ and consider the 
Cartesian diagram
$$
\begin{tikzcd}
V_Y \rar{p_Y}\dar{\psi} & U_Y\rar{j_Y}\dar & Y\dar{\varphi} \\
V\rar{p} & U\rar{j} & X,
\end{tikzcd}
$$
where $j$ and $p$ are as in Lemma~\ref{lem_decomp1} and $j_Y$ and $p_Y$ are the corresponding morphisms for $Y$ and $g$. 
Note that we have a base-change theorem that gives
\begin{equation}\label{eq_rem_behavior_etale}
\varphi^!j_+p_+\QQ^H_V[n]\simeq (j_Y)_+(p_Y)_+\psi^!\QQ^H_V[n]
\end{equation}
(see \cite[(4.4.3)]{Saito-MHM}).
Moreover, since $\varphi$ is smooth, if $d=\dim(Y)-\dim(X)$, then for every filtered $\Dmod$-module 
$(\Mmod,F)$ underlying a mixed Hodge module $M$, the filtered $\Dmod$-module
underlying $\varphi^!M$ is $(\varphi^*\Mmod,F)[d]$, where $F_k(\varphi^*\Mmod)=\varphi^*(F_{k}\Mmod)$
(see \cite[3.5]{Saito-MHP}).
This also applies to $\psi$; in particular, we have $\psi^!\QQ_V^H[n]\simeq\QQ_{V_Y}^H[n+2d]$.
By decomposing both sides of (\ref{eq_rem_behavior_etale}) with respect to the $\mu_{\ell}$-action, we
obtain our assertion.
\end{remark}

\subsection{The case of smooth divisors}\label{smooth}

Our goal now is to describe the filtrations introduced in the previous section when $Z$ is a smooth divisor. We will then use this to define
Hodge ideals for arbitrary $\QQ$-divisors. The key result in the smooth case is the following:

\begin{lemma}\label{lem_decomp2}
Let 
 $$\psi\colon Y=\Spec \CC[t]\longrightarrow X=\Spec \CC[x]$$ be
  the map given by $\psi^*(x)=t^{\ell}$. If $Z$ is the divisor on $Y$ defined by
$t$, then
we have
an isomorphism of filtered $\Dmod_X$-modules
$$\psi_+\shO_{Y}(*Z)\simeq\bigoplus_{j=0}^{\ell-1}\Mmod_j,$$
where 
$\Mmod_j\simeq\Dmod_X/\Dmod_X(\partial_x x - \frac{j}{\ell})$ 
and $F_k\Mmod_j$ is generated over $\shO_X$ by the classes of $1,\partial_x,\ldots,\partial_x^k$.
Moreover, if we consider on $Y$ the $\mu_{\ell}$-action such that every $\lambda\in\mu_{\ell}$ maps $t$ to $\lambda t$, then
$\Mmod_j$ is
the component of $\psi_+\shO_{Y}(*Z)$
on which every  $\lambda\in\mu_{\ell}$ acts by multiplication with $\lambda^j$.
\end{lemma}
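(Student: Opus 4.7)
The plan is to decompose $\psi_+\shO_Y(*Z)$ by separating the ramified and étale parts of $\psi$, identify each $\mu_{\ell}$-isotypic component as a cyclic $\Dmod_X$-module, and then transport the Hodge filtration via that identification. Let $Y^* = Y \smallsetminus \{0\}$ and $X^* = X \smallsetminus \{0\}$, with open immersions $\iota \colon Y^* \hookrightarrow Y$ and $\iota' \colon X^* \hookrightarrow X$, so that $\psi$ restricts to a finite étale Galois cover $\psi' \colon Y^* \to X^*$ of degree $\ell$. Since $\shO_Y(*Z) \simeq \iota_+\shO_{Y^*}$ and $\psi \circ \iota = \iota' \circ \psi'$, the composition of pushforwards gives
\[
\psi_+\shO_Y(*Z) \simeq \iota'_+\psi'_+\shO_{Y^*} \simeq \iota'_+\psi'_*\shO_{Y^*},
\]
the second isomorphism because $\psi'$ is étale. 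The $\mu_{\ell}$-action $\lambda \cdot t = \lambda t$ is compatible with $\psi'$ and splits $\psi'_*\shO_{Y^*} = \bigoplus_{j=0}^{\ell-1}\shO_{X^*}\cdot t^j$ into character eigenspaces, each a rank one $\Dmod_{X^*}$-module with $\partial_x \cdot t^j = (j/\ell)\, x^{-1} t^j$ (formally from $t^j = x^{j/\ell}$).

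I next identify $\iota'_+(\shO_{X^*}\cdot t^j)$ with $\Mmod_j = \Dmod_X/\Dmod_X(\partial_x x - j/\ell)$. Set $v_j := x^{-1} t^j \in \CC[x,x^{-1}]\cdot t^j$. A direct check gives $x\partial_x v_j = (j/\ell - 1) v_j$, equivalently $(\partial_x x - j/\ell) v_j = 0$, producing a $\Dmod_X$-module surjection $\Mmod_j \twoheadrightarrow \iota'_+(\shO_{X^*}\cdot t^j)$ sending the class of $1$ to $v_j$. Induction shows $\partial_x^k v_j = c_k\, x^{-(k+1)} t^j$ with $c_k = \prod_{i=1}^{k}(j/\ell - i)$, which is nonzero since $0 \leq j \leq \ell-1$ forces $j/\ell < 1 \leq i$. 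Hence $\{c_k^{-1}\partial_x^k v_j\}_{k\geq 0}$ is the pole-order basis of $\CC[x,x^{-1}]\cdot t^j$ as an $\shO_X$-module, making the surjection an isomorphism, and the eigenspace identification yields the $\mu_{\ell}$-character claim.

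For the filtration, I use the filtered resolution $C^\bullet = [\Dmod_Y \to \omega_Y(\log Z) \otimes \Dmod_Y]$ of $\omega_Y(*Z)$ from \cite[Proposition~3.1]{MP1}, which realizes the pole order filtration. Since $\psi$ is finite, $\psi_+$ acts termwise on this resolution, and combining with the $\mu_{\ell}$-equivariance of \lemmaref{decomp_filtration} and the left-right shift $F_{k-1} = F_k \otimes \omega$, one recovers on each summand the filtration $F_k = x^{-(k+1)}\shO_X\cdot t^j$. Under $\Mmod_j \simeq \iota'_+(\shO_{X^*}\cdot t^j)$ and the relations $\partial_x^k v_j = c_k x^{-(k+1)} t^j$ with $c_k \neq 0$, this is exactly the $\shO_X$-span of the classes of $1, \partial_x, \ldots, \partial_x^k$. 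The main obstacle is this last step: matching the pushforward of the filtered Spencer resolution with the cyclic ``$\partial_x^k$-generated'' description requires carefully tracking both the pole order on $Y$ and the way $\partial_x$ interacts with the $\Dmod_Y$-structure on $\omega_Y(\log Z) \otimes \Dmod_Y$ under the finite ramified map, with the non-vanishing of the $c_k$ (equivalently, the non-integrality of $j/\ell$ for $0 < j < \ell$) providing the crucial bridge between the abstract pushforward filtration and the concrete generator description.
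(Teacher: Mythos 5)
Your first two paragraphs are correct and in fact a little more direct than the paper on the underlying $\Dmod$-module level: defining $v_j = x^{-1}t^j$, checking $(\partial_x x - j/\ell)v_j = 0$, and verifying $c_k = \prod_{i=1}^k (j/\ell - i) \neq 0$ to promote the surjection $\Mmod_j \twoheadrightarrow \iota'_+(\shO_{X^*}\cdot t^j)$ to an isomorphism is clean, and the $\mu_{\ell}$-character bookkeeping is right.

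The gap is in the third paragraph, which is where all the content of the lemma lives. You assert that pushing forward the filtered complex $A^\bullet = [\Dmod_Y \to \omega_Y(Z)\otimes\Dmod_Y]$ along $\psi$, combined with $\mu_\ell$-equivariance and the left-right shift, ``recovers on each summand the filtration $F_k = x^{-(k+1)}\shO_X\cdot t^j$,'' and you yourself flag this step as the main obstacle. But you never carry it out: there is no identification of $\psi_*(A^\bullet \otimes_{\Dmod_Y}\Dmod_{Y\to X})$ as a complex of $\Dmod_X$-modules, and no verification that the filtration Saito's formalism puts on the cokernel matches the claimed pole-order/generator description. Note in particular that the lemma's conclusion is stated in terms of the classes of $1,\partial_x,\ldots,\partial_x^k$ in a cyclic quotient of $\Dmod_X$, which is not a priori the same as pole order; that match is precisely what must be proved. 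The paper proves it by writing out $\psi_*(A^\bullet \otimes \Dmod_{Y\to X})$ explicitly: using $\tfrac{dt}{t} = \tfrac{1}{\ell}\tfrac{dx}{x}$, $t\partial_t = \ell x\partial_x$, and $t\partial_t\cdot t^j = t^j(t\partial_t + j)$, the pushforward decomposes termwise as $\bigoplus_j B_j^\bullet$ with $B_j^\bullet$ the two-term complex $\Dmod_X \to \Dmod_X$, differential $P \mapsto (x\partial_x + \tfrac{j}{\ell})P$; from this the filtered cokernel and its generating set are read off, and the left-right conversion via the transpose $\tau$ produces the left module $\Dmod_X/\Dmod_X(\partial_x x - j/\ell)$ with the shifted filtration. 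You need to supply this computation (or an equivalent one) rather than appeal to the result; without it, the filtration claim — which is what Corollary \ref{smooth_case} and everything downstream rely on — remains unproved.
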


\begin{proof}
As usual, it is easier to do the computation for the filtered right $\Dmod$-module $\omega_Y(*Z)$ corresponding to $\shO_Y(*Z)$. 
Note that this is filtered quasi-isomorphic to the complex
$$A^{\bullet}:\,\,0\longrightarrow \Dmod_Y\overset{w}\longrightarrow \omega_Y(Z)\otimes_{\shO_Y}\Dmod_Y\longrightarrow 0,$$
placed in degrees $-1$ and $0$, where $w(1)=\frac{dt}{t}\otimes t\partial_t$; see e.g. \cite[Proposition 3.1]{MP1}. 
Since $\psi$ is finite, the functor $\psi_*$ is exact on quasi-coherent $\shO_Y$-modules, hence
$\psi_+\omega_Y(*Z)$ is computed by the $0$-th cohomology of the complex
$$B^{\bullet}=\psi_*(A^{\bullet}\otimes_{\Dmod_Y}\Dmod_{Y\to X}),$$
with the obvious induced filtration. The definition of $w$
immediately implies that $w\otimes 1_{\Dmod_{Y\to X}}$ is injective. 
Note that $\frac{dt}{t}=\frac{1}{\ell}\frac{dx}{x}$ and $t\partial_t=\ell x\partial_x$.

In order to describe the complex $B^{\bullet}$, note that any element of $B^{-1}$ can be uniquely written as $\sum_{j=0}^{\ell-1}t^jP_j$,
with $P_j\in \Dmod_X$. Similarly, any element in $B^0$ can be uniquely written as $\sum_{j=0}^{\ell-1}t^j\frac{dx}{x}Q_j$, with $Q_j\in\Dmod_X$.
Moreover, if $\tau$ is the differential in $B^{\bullet}$, then
$$\tau\left(\sum_{j=0}^{\ell-1}t^jP_j\right)=\sum_{j=0}^{\ell-1}t^j\frac{dx}{x}(x\partial_x+\frac{j}{\ell})P_j,$$
where we use the fact that $t\partial_tt^j=t^jt\partial_t+jt^j$. In other words, we have have an eigenspace decomposition 
$$B^{\bullet}\simeq \bigoplus_{j=0}^{\ell-1}B^{\bullet}_j,$$
where $B^{\bullet}_j$ is identified with the complex
$$0\to\Dmod_X\to \Dmod_X\to 0,$$
with the differential mapping $P$ to $(x\partial_x+\frac{j}{\ell})P$.
It follows that $B^{\bullet}$ is filtered quasi-isomorphic to
$$\bigoplus_{j=0}^{\ell-1}\Dmod_X/(x\partial_x+\frac{j}{\ell})\Dmod_X,$$
where the filtration on the $j$-th component is such that
$$F_{k-1}\left(\Dmod_X/(x\partial_x+\frac{j}{\ell})\Dmod_X\right)$$
is the $\shO_X$-submodule generated by the classes of $1,\partial_x,\ldots,\partial_x^k$.
Moreover, every $\lambda\in\mu_{\ell}$ acts on the $j^{\rm th}$ factor in the above decomposition
by multiplication with $\lambda^j$. 

The assertion in the lemma now follows immediately from the explicit description
of the equivalence between the categories of left and right $\Dmod$-modules on $X={\mathbf A}^1$. 
Indeed, recall that if $\tau$ is the $\CC$-linear endomorphism of the Weyl algebra 
$\Gamma({\mathbf A}^1,\Dmod_{{\mathbf A}^1})$
such that $\tau(PQ)=\tau(Q)\cdot\tau(P)$ for all $P$ and $Q$, and such that $\tau(t)=t$ and $\tau(\partial_t)=-\partial_t$, then the left
$\Dmod$-module $N$ corresponding to a right $\Dmod$-module $M$ is isomorphic to $M$ itself, with scalar multiplication given via the map $\tau$.
Moreover, for filtered $\Dmod$-modules, via this isomorphism $F_kN$ corresponds to $F_{k-1}M$.
In particular, we see that if $M=\Dmod_X/P\cdot\Dmod_X$, then $N\simeq \Dmod_X/\Dmod_X\cdot \tau(P)$, and we obtain the statement.
\end{proof}

In what follows, we denote by $\lceil\alpha\rceil$ the smallest integer that is $\geq\alpha$. For a $\QQ$-divisor $D=\sum_{i=1}^ra_iD_i$,
we put $\lceil D\rceil=\sum_{i=1}^r\lceil a_i\rceil D_i$. 

\begin{corollary}\label{smooth_case}
If $h\in \shO_X(X)$ is nonzero and such that the support $Z$ of ${\rm div}(h)$ is smooth (possibly disconnected), then
for every $\alpha\in\QQ_{> 0}$ the filtration on 
$\Mmod(h^{- \alpha})$ is given by
$$F_k\Mmod(h^{-\alpha})=\shO_X\big((k+1)Z-\lceil D\rceil\big)h^{-\alpha}\quad \text{if}\quad k\geq 0,$$
where $D=\alpha\cdot{\rm div}(h)$, 
and $F_k\Mmod(h^{-\alpha})=0$ if $k<0$.
\end{corollary}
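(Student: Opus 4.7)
The plan is to reduce the claim to a one-dimensional model and then compute directly via Lemma~\ref{lem_decomp2}. Since the filtration on $\Mmod(h^{-\alpha})$ is local and compatible with smooth pullback (Remark~\ref{rem_behavior_etale}), and the formula $\shO_X((k+1)Z-\lceil D\rceil)h^{-\alpha}$ has the same functoriality, it suffices to treat a model situation.

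First I would pass to local coordinates $x_1,\ldots,x_n$ near a point of $Z$ with $Z=\{x_1=0\}$, so that $h=u\cdot x_1^m$ for some unit $u$ and some $m\geq 1$. Using Remark~\ref{rem_indep_equations1}, after pulling back along an \'etale cover on which $u^{\ell\alpha}$ admits an $\ell$-th root (for $\ell$ with $\ell\alpha\in\ZZ$), I obtain a filtered isomorphism $\Mmod(h^{-\alpha})\simeq\Mmod(x_1^{-m\alpha})$, and by Remark~\ref{rem_behavior_etale} this cover does not affect the claim. Setting $\beta=m\alpha$, so that $D=\beta Z$ and $\lceil D\rceil=\lceil\beta\rceil Z$ locally, I would then apply Remark~\ref{rem_behavior_etale} again to the smooth projection $\AAA^n\to\AAA^1$, $(x_1,\ldots,x_n)\mapsto x_1$, thereby reducing to $X=\AAA^1$, $h=x$, and $\beta\in\QQ_{>0}$ arbitrary.

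For the one-dimensional base case, first suppose $\beta\in(0,1)\cap\QQ$ and write $\beta=a/\ell$ with $0<a<\ell$. Matching the $\mu_{\ell}$-weight decompositions of Lemma~\ref{lem_decomp1_v2} (character $\lambda\mapsto\lambda^{-i}$ on $\Mmod(x^{-i/\ell})$) and Lemma~\ref{lem_decomp2} (character $\lambda\mapsto\lambda^j$ on $\Mmod_j$), I identify $\Mmod(x^{-\beta})$ as a filtered $\Dmod_X$-module with $\Mmod_{\ell-a}=\Dmod_X/\Dmod_X\bigl(\partial_x x-\tfrac{\ell-a}{\ell}\bigr)$, with $1\in\Mmod_{\ell-a}$ corresponding to $x^{-\beta}$ (a short check: $\partial_x(x\cdot x^{-\beta})=(1-\beta)x^{-\beta}$ and $1-\beta=(\ell-a)/\ell$). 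Lemma~\ref{lem_decomp2} then expresses $F_k\Mmod(x^{-\beta})$ as the $\shO_X$-span of $\partial_x^s(x^{-\beta})=c_s\,x^{-\beta-s}$ for $0\leq s\leq k$ with $c_s\neq 0$, giving $F_k\Mmod(x^{-\beta})=\shO_X\cdot x^{-\beta-k}=\shO_X(kZ)\cdot x^{-\beta}$, which matches the formula because $\lceil\beta\rceil=1$. The boundary case $\beta=1$ is handled by Remark~\ref{case_ell_1} combined with the classical identity $F_k\shO_X(*Z)=\shO_X((k+1)Z)$ for a smooth divisor. Finally, for arbitrary $\beta\in\QQ_{>0}$ I would write $\beta=\beta'+r$ with $\beta'\in(0,1]$ and $r\in\ZZ_{\geq 0}$, and transport the computation through the filtered isomorphism $\Mmod(x^{-\beta'})\to\Mmod(x^{-\beta})$, $gx^{-\beta'}\mapsto(gx^r)x^{-\beta}$, from the discussion after Remark~\ref{rem_isom_comp_with_filtrations}; the image of $\shO_X(kZ)\cdot x^{-\beta'}$ is $\shO_X((k-r)Z)\cdot x^{-\beta}$, and $\lceil\beta\rceil=r+1$ yields $k+1-\lceil\beta\rceil=k-r$, completing the match. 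The vanishing $F_k=0$ for $k<0$ is built into Lemma~\ref{lem_decomp2}, since no $\shO_X$-generators are listed for negative filtration indices.

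The main obstacle I anticipate is the bookkeeping required to identify $\Mmod(x^{-i/\ell})$ with the correct $\Mmod_j$ via the two $\mu_{\ell}$-characters, and to carry the generators and filtrations through the full chain of identifications (\'etale pullback to remove the unit, rescaling $(h,\alpha)\leftrightarrow(x_1,m\alpha)$, reduction to $\AAA^1$, and the integer shift from $\beta$ to $\beta'\in(0,1]$). Once these identifications are pinned down, the actual computation of $F_k$ from $\partial_x^s(x^{-\beta})$ is immediate.
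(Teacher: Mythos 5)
Your argument is correct and uses the same key ingredients, but routes the one‑dimensional computation through a genuinely cleaner auxiliary cover than the paper does. The paper reduces to $X=\AAA^1$, $h=x$, writes $\alpha=m/\ell$ in lowest terms, and works with the cyclic cover $y^{\ell}=x^{-m}$ attached to $\alpha$ itself; its closure $W=\Spec\CC[x,z]/(z^{\ell}-x^{m})$ is \emph{not} normal, so the paper passes to the normalization $\widetilde{W}=\Spec\CC[t]$ with $t^{\ell}=x$, $t^{m}=z$ (using $\gcd(m,\ell)=1$), and the $\mu_{\ell}$-action on $t$ comes out twisted to $\lambda\colon t\mapsto\lambda^{a}t$ for a Bezout relation $am+b\ell=1$, so that the relevant weight is $j\equiv -m\ (\mathrm{mod}\ \ell)$. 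You instead pass to the cover attached to the exponent $1/\ell$, namely $y^{\ell}=x^{-1}$: this compactifies directly to $\Spec\CC[t]$ with $t=y^{-1}$ and $x=t^{\ell}$, with no normalization needed, and the $\mu_{\ell}$-action on $t$ is already the standard one of Lemma~\ref{lem_decomp2}, so the matching $j=\ell-a$ is immediate with no Bezout bookkeeping. The cost is that $\Mmod(x^{-a/\ell})$ is now the $a$-th summand rather than the defining $i=1$ summand of the decomposition; but Lemma~\ref{lem_decomp1_v2}, which you correctly invoke, is exactly what guarantees that the filtration induced from this alternative cover agrees with the defined one. One step you leave implicit and should write down is the identification $j_+p_+\shO_V\simeq\psi_+\shO_{\Spec\CC[t]}(*\{t=0\})$ of the two filtered $\Dmod_X$-modules being decomposed, obtained from $V\simeq\Spec\CC[t^{\pm 1}]$ and $\psi\circ\iota=j\circ p$; your phrase ``matching the $\mu_{\ell}$-weight decompositions'' silently assumes this, and it is precisely the (here trivial) analogue of the paper's normalization step. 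The reductions to ${\rm div}(h)=Z$ via an \'etale cover, to $\AAA^1$ via a smooth projection, the $\beta=1$ boundary case via Remark~\ref{case_ell_1}, and the integer shift handling $\beta>1$ all parallel the paper's argument.
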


\begin{proof}
We first reduce to the case when $Z={\rm div}(h)$. We can check the assertion in the proposition locally, hence we may assume that
$Z={\rm div}(g)$, for some $g\in\shO_X(X)$, and $h=ug^m$, for some $u\in\shO_X^*(X)$. Furthermore, by Remark~\ref{rem_behavior_etale},
it is enough to prove the assertion after passing to a surjective \'{e}tale cover, hence we may assume that $u=v^m$ for some $v\in\shO_X^*(X)$.
After replacing $g$ by $vg$, we may thus assume that $h=g^m$. In this case we have an isomorphism
of filtered $\Dmod$-modules $\Mmod(h^{-\alpha})\simeq\Mmod(g^{-m\alpha})$, hence we may and will assume that
${\rm div}(h)=Z$.

We consider the smallest positive integer $\ell$ such that $m:=\ell\alpha\in\ZZ$.
If $\ell=1$, then the assertion follows from the formula for the filtration on $\shO_X(*Z)$ when $Z$ is smooth; see \cite[Proposition 8.2]{MP1}. Therefore from now on we assume $\ell>1$.

The morphism $h\colon X\to\AAA^1$ is smooth over some open neighborhood of $0$. Using Remark~\ref{rem_behavior_etale}, we see that in order to prove the corollary,
we may assume that $X=\AAA^1$ and $h=x$, the standard coordinate on $\AAA^1$.
Consider the Cartesian diagram 
$$
\begin{tikzcd}
V \rar{j_0}\dar{p} & W\dar{g} \\
U\rar{j} & X,
\end{tikzcd}
$$
where 
$$j_0\colon V=\Spec \CC[x,x^{-1},y]/(y^{\ell}-x^{-m})\to W=\Spec \CC[x,z]/(z^{\ell}-x^m),\quad j_0^*(z)=y^{-1}.$$
Let
 $\varphi\colon\widetilde{W}= \Spec \CC[t] \to W$ be the normalization, given by 
$$\varphi^*(x)=t^{\ell} \,\,\,\, {\rm and} \,\,\,\, \varphi^*(z)=t^m.$$ 
(Here we use that $\ell$ and $m$ are relatively prime.) 
Note that $\varphi$ is an isomorphism over $V$, hence we have an open embedding $\iota\colon V\hookrightarrow \widetilde{W}$, with complement the smooth divisor $T$ defined by $t$
(in fact, if $a$ and $b$ are integers such that $am+b\ell=1$, then $\iota^*(t)=y^{-a}x^b$). 
We thus have 
$$j_+p_+\shO_V\simeq \psi_+\iota_+\shO_V\simeq\psi_+\shO_{\widetilde{W}}(*T),$$
where $\psi=g\circ\varphi$. We apply Lemma~\ref{lem_decomp2} for $\psi$. 
Note that $\varphi$ is a $\mu_{\ell}$-equivariant morphism if we let each $\lambda\in\mu_{\ell}$ act on $t$ by multiplication with $\lambda^a$.
By considering the behavior with respect to the
$\mu_{\ell}$-action, we see that
 in the decomposition given by the lemma, we have
$\Mmod_j\simeq\Mmod(x^{-\alpha})$ if and only if $ja\equiv -1$ (mod $\ell$), that is,
$j\equiv -m$ (mod $\ell$). 

Suppose first that $\alpha<1$, in which case the condition for $j$ is that $j=\ell-m$. As a reality check, note that we indeed have an isomorphism
$$\Dmod_X/\Dmod_X(\partial_xx -\frac{\ell-m}{\ell})\simeq \Mmod(x^{-\alpha})$$
that maps the class of $1$ to $x^{-\alpha}$. 
The formula for the filtration on $\Mmod(h^{\alpha})$ now follows from Lemma~\ref{lem_decomp2}.
When $\alpha>1$, we put $m=\lceil\alpha\rceil-1$, and use the fact from Remark \ref{twist_positive}, namely that we have an isomorphism of filtered modules
$$\Mmod(x^{-\alpha})\to\Mmod(x^{-\alpha+m}),\quad gx^{-\alpha}\to (gx^{-m})x^{-\alpha+m},$$
to reduce the assertion to the case $\alpha\in (0,1)$.
This completes the proof of the corollary. 
\end{proof}

\subsection{Definition of Hodge ideals for $\QQ$-divisors}\label{definition}
In general, we obtain an upper bound for the terms in the filtration on $\Mmod(h^{-\alpha})$
by restricting to the open subset where the support of ${\rm div}(h)$ is smooth, as follows.

\begin{proposition}\label{prop_for_def}
Given a nonzero $h\in\shO_X(X)$ and a positive rational number $\alpha$, 
for every $k\geq 0$ we have 
$$F_k\Mmod(h^{-\alpha})\subseteq \shO_X\big((k+1)Z-\lceil D\rceil \big)h^{-\alpha},$$
where $D=\alpha\cdot {\rm div}(h)$ and $Z={\rm Supp}(D)$,
while $F_k\Mmod(h^{-\alpha})=0$ for $k<0$.
\end{proposition}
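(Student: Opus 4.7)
The plan is to reduce to the smooth case via a codimension-two argument. The claim is local on $X$. Since the Hodge filtration on $j_+p_+\shO_V$ is good and $\Mmod(h^{-\alpha})$ is a filtered direct summand of it by Lemma~\ref{decomp_filtration}, each $F_k\Mmod(h^{-\alpha})$ is a coherent $\shO_X$-subsheaf of the ambient module $\Mmod(h^{-\alpha})=\shO_X(*Z)h^{-\alpha}$; so is the candidate upper bound $\shO_X((k+1)Z-\lceil D\rceil)h^{-\alpha}$. It thus suffices to compare them as coherent subsheaves of $\shO_X(*Z)h^{-\alpha}$.

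Let $U_0 = X\smallsetminus Z_{\mathrm{sing}}\subseteq X$; this is an open subset whose complement has codimension at least two in $X$ because $Z$ is reduced. By Remark~\ref{rem_behavior_etale} the filtration is compatible with restriction to $U_0$, and since $Z|_{U_0}$ is smooth, Corollary~\ref{smooth_case} yields
$$F_k\Mmod(h^{-\alpha})|_{U_0}=\shO_{U_0}\big((k+1)Z-\lceil D\rceil\big)h^{-\alpha}\quad\text{for}\quad k\geq 0,$$
and $F_k\Mmod(h^{-\alpha})|_{U_0}=0$ for $k<0$. The case $k<0$ of the proposition is then immediate: $\Mmod(h^{-\alpha})$ is $\shO_X$-torsion-free, so a coherent subsheaf vanishing on the dense open $U_0$ must be zero.

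For $k\ge 0$, I would conclude by an associated-primes argument. Locally, choose $N\ge k+1$ large enough that $F_k\Mmod(h^{-\alpha})\subseteq \shO_X(NZ)h^{-\alpha}$, and consider the coherent quotient
$$Q:=\shO_X(NZ)h^{-\alpha}\big/\shO_X\big((k+1)Z-\lceil D\rceil\big)h^{-\alpha}.$$
Filtering $Q$ by the images of $\shO_X(jZ)h^{-\alpha}$ for $j$ decreasing from $N$, the successive quotients identify with the line bundles $\shO_Z(jZ)$ on the reduced divisor $Z$, so every associated prime of $Q$ is the generic point of a component of $Z$, in particular has codimension one in $X$. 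The composition $F_k\Mmod(h^{-\alpha})\to \shO_X(NZ)h^{-\alpha}\to Q$ vanishes on $U_0$ by the smooth case, so its image is a coherent subsheaf of $Q$ supported on $X\smallsetminus U_0$, a set of codimension at least two; such a subsheaf of $Q$ must be zero, which yields the required inclusion.

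The main subtlety is this final step: Corollary~\ref{smooth_case} only controls the pole orders of sections of $F_k\Mmod(h^{-\alpha})$ at the generic points of the components of $Z$, and this pointwise information has to be promoted to a genuine global containment of coherent sheaves. It is precisely the absence of embedded associated primes of codimension $\geq 2$ in the quotient $Q$ — guaranteed by its filtration whose graded pieces are line bundles on $Z$ — that makes this promotion possible.
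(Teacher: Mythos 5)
Your proof is correct and follows the same strategy as the paper's: restrict to the open set where $Z$ is smooth (whose complement has codimension $\ge 2$), apply Corollary~\ref{smooth_case} there, and then extend back to $X$ across the codimension-two bad locus. The only real divergence is in how the final extension step is justified. The paper is terser: it observes that $F_k\Mmod(h^{-\alpha})$ is torsion-free, hence injects into $\iota_*\big(F_k\Mmod(h^{-\alpha})\vert_{X_0}\big)$, and then uses (implicitly) that the line bundle $\shO_X\big((k+1)Z-\lceil D\rceil\big)$ on the smooth variety $X$ equals its own pushforward from the complement of a codimension-two closed subset. Your associated-primes computation on the quotient $Q$ spells out precisely why that codimension-two extension works, which is a bit longer but morally identical — both arguments reduce to the $S_2$ property of the target line bundle. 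One small imprecision to fix: the filtration of $Q$ should step from $NZ$ down to $(k+1)Z-\lceil D\rceil$ through a chain of divisors supported on $Z$ in which each step drops one coefficient by $1$, not only through the integer multiples $jZ$, since $\lceil D\rceil$ need not be a multiple of $Z$; the graded pieces are then line bundles on prime components of $Z$, which still gives that every associated prime of $Q$ has codimension one, so the argument goes through unchanged.
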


\begin{proof}
Let $\iota\colon X_0\to X$ be an open immersion such that the codimension of its image in $X$ is $\geq 2$ and $Z\vert_{X_0}$ is smooth (though possibly disconnected).
Note that our constructions are compatible with restrictions to open subsets. Moreover, since $\Mmod(h^{-\alpha})$ is clearly torsion-free, it follows that
$F_k:=F_k\Mmod(h^{-\alpha})$ is torsion free, hence the canonical map $F_k\to \iota_*\big(F_k\vert_{X_0}\big)$ is injective. Therefore it is enough to prove the assertion on 
$X_0$, hence we may assume that $Z$ is smooth. However, in this case the assertion follows 
from Corollary~\ref{smooth_case}.
\end{proof}

We can now  define the Hodge ideals for $\QQ$-divisors. Let $X$ be a smooth complex algebraic variety and $Z$ a reduced effective divisor on $X$.
Given an effective $\QQ$-divisor $D$ with ${\rm Supp}(D)=Z$, we define coherent ideals sheaves $I_k(D)$  in $\shO_X$ as follows.
Suppose first that there is a nonzero $h\in\shO_X(X)$, with $H = {\rm div}(h)$, and a positive rational number $\alpha$ such that $D=\alpha H$. 
It turns out to be more convenient to work with the $\Dmod_X$-module $\Mmod (h^\beta)$, where $\beta = 1 - \alpha$. Recall that we have a 
filtered isomorphism 
$$\Mmod (h^{-\alpha}) \rightarrow \Mmod (h^\beta), \,\,\,\,\,\,wh^{- \alpha} \to (wh^{-1})h^\beta,$$
and therefore, if $k\geq 0$,
it follows from Proposition~\ref{prop_for_def} that there is a unique coherent ideal $I_k(D)$ such that 
$$F_k\Mmod(h^{\beta})=I_k(D)\otimes_{\shO_X} \shO_X\big(kZ + H \big)h^{\beta}$$
(note that we always have $\lceil D \rceil \ge Z$).
The definition is independent of the choice of $\alpha$ and $h$: indeed, using Remark~\ref{rem_behavior_etale}, it is enough to check this after the pullback by a suitable \'{e}tale surjective map, hence we deduce the independence assertion using Remark~\ref{rem_indep_equations1}. 
This implies that the general case of the definition follows by covering $X$ with suitable affine open subsets such that $D$ can be written as above in each of them. Note that when $D=Z$ we have $\beta = 0$, and so the ideals $I_k(D)$ are the Hodge ideals studied in \cite{MP1}.

\begin{remark}\label{rmk_inclusion_ideals}
From the definition and the filtration property,  it follows that we always have the inclusion 
$$\shO_X (-Z) \cdot I_{k-1}(D) \subseteq I_k (D)\quad\text{for}\quad k\geq 1.$$ 
We note that for the reduced divisor $Z$,
we have the more subtle inclusions
$$I_k (Z) \subseteq I_{k-1} (Z)\quad \text{for}\quad k\geq 1$$
(see \cite[Proposition~13.1]{MP1}). We do not know however whether this holds for arbitrary $\QQ$-divisors $D$, 
and in fact we suspect that this is not the case. (Note that it does hold when $D$ has simple normal crossings support
by Proposition \ref{Hodge_ideals_SNC}. It is also shown to hold when $D$ has an isolated weighted homogeneous singularity 
in the upcoming \cite{Zhang}.)
However, when $D = \alpha Z$ these inclusions do hold modulo the ideal $\shO_X(-Z)$, see \cite[Corollary~B]{MP3}. 
More precisely, we have 
$$I_k(\alpha Z)+\shO_X(-Z)\subseteq I_{k-1}(\alpha Z)+\shO_X(-Z)\quad\text{for}\quad k\geq 1.$$
This implies in particular that if $I_k(\alpha Z)=\shO_X$ for some $k\geq 1$, then $I_{k-1}(\alpha Z)=\shO_X$.
\end{remark}

\begin{remark}\label{old_ideal}
According to Proposition \ref{prop_for_def}, we also have ideals $I_k^\prime (D)$ given by
$$F_k\Mmod(h^{-\alpha})=I_k^\prime (D)\otimes_{\shO_X} \shO_X\big((k+1)Z -\lceil D\rceil \big)h^{-\alpha},$$
which are related to $I_k(D)$ by the formula
$$I_k (D) = I_k^\prime (D) \otimes_{\shO_X} \shO_X (Z -\lceil D\rceil).$$
\end{remark}

The following periodicity property often allows us to reduce our study to the case  $\lceil D\rceil = Z$.

\begin{lemma}\label{periodicity}
If $D'$ is an integral divisor with ${\rm Supp}(D')\subseteq {\rm Supp}(D)$, then
$$I_k(D+D')=I_k(D)\otimes_{\shO_X} \shO_X (- D').$$
In particular 
$$I_k (D) = I_k (B) \otimes \shO_X (Z - \lceil D \rceil),$$
with $B = D + Z - \lceil D \rceil$ satisfying $\lceil B \rceil = Z$.
\end{lemma}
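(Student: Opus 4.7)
The plan is to reduce the identity $I_k(D+D')=I_k(D)\otimes\shO_X(-D')$ to a local computation and then extract it from the filtered isomorphism provided by Remark \ref{rem_isom_comp_with_filtrations}. Both sides are coherent sheaves, so the statement is local on $X$; using Remark \ref{eq_renormalization_0} I may write $D=\frac{1}{\ell}\,{\rm div}(h)$ for some positive integer $\ell$ and some $h\in\shO_X(X)$. It suffices to treat the case $D'\ge 0$: a general integral $D'$ (for which $D+D'$ is effective with the same support $Z$) can be reduced to this by replacing $D'$ with $D'+NZ$ for $N\gg 0$ and comparing the effective case applied to the pairs $(D,\,D+D'+NZ)$ and $(D+D',\,D+D'+NZ)$.

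Given $D'\ge 0$, I write locally $D'={\rm div}(u)$ with $u\in\shO_X(X)$ and set $\tilde h=hu^\ell$, so that $D+D'=\frac{1}{\ell}\,{\rm div}(\tilde h)$. By Remarks \ref{rmk_add_eff_div} and \ref{rem_isom_comp_with_filtrations}, applied with $D+D'$, $D$, and $u$ playing the roles of the remarks' $D$, $D'$, and $u$, there is a filtered $\Dmod_X$-module isomorphism
$$
\Phi\colon \Mmod(\tilde h^{-1/\ell})\xrightarrow{\sim}\Mmod(h^{-1/\ell}),\qquad g\,\tilde h^{-1/\ell}\longmapsto gu^{-1}h^{-1/\ell}.
$$
Using Remark \ref{old_ideal} to write out the Hodge filtrations on each side, a local section $g\in\shO_X$ has $g\tilde h^{-1/\ell}\in F_k\Mmod(\tilde h^{-1/\ell})$ exactly when $g\in I_k^\prime(D+D')\cdot\shO_X\bigl((k+1)Z-\lceil D+D'\rceil\bigr)$; transporting this through $\Phi$, the equivalent condition on the right side is $gu^{-1}\in I_k^\prime(D)\cdot\shO_X\bigl((k+1)Z-\lceil D\rceil\bigr)$. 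Since $u\cdot\shO_X=\shO_X(-D')$ and $\lceil D+D'\rceil=\lceil D\rceil+D'$ (as $D'$ is integral), matching these two conditions yields $I_k^\prime(D+D')=I_k^\prime(D)$.

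Converting from $I_k^\prime$ back to $I_k$ via Remark \ref{old_ideal} then gives $I_k(D+D')=I_k(D)\otimes\shO_X(-D')$, as claimed. For the ``in particular'' statement I apply this with $D'=\lceil D\rceil-Z$, which is an effective integral divisor supported on $Z$. The divisor $B=D+Z-\lceil D\rceil$ has all coefficients in $(0,1]$, whence $\lceil B\rceil=Z$ and ${\rm Supp}(B)=Z$; the main identity applied to the pair $(B,\,\lceil D\rceil-Z)$, noting that $B+(\lceil D\rceil-Z)=D$, then yields $I_k(D)=I_k(B)\otimes\shO_X(Z-\lceil D\rceil)$.

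The step that requires the most care is the bookkeeping that converts the filtered isomorphism $\Phi$ into the ideal equality $I_k^\prime(D+D')=I_k^\prime(D)$: one has to line up the twists $\shO_X((k+1)Z-\lceil\cdot\rceil)$ on each side together with the factor $u^{-1}$ introduced by $\Phi$, and use the integrality of $D'$ to see that $\lceil D+D'\rceil=\lceil D\rceil+D'$. Everything else follows formally from the remarks already established.
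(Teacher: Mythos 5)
Your proof is correct and follows essentially the same route as the paper's: reducing to the equivalent statement $I_k'(D+D')=I_k'(D)$ via Remark \ref{old_ideal} and then extracting that from the filtered isomorphism of Remark \ref{rem_isom_comp_with_filtrations}. The paper leaves the bookkeeping and the reduction to $D'\geq 0$ implicit; you have simply spelled them out, and the details check (including the observation that $\lceil D+D'\rceil=\lceil D\rceil+D'$ for integral $D'$, and the two-step comparison via $D'+NZ$ for $N\gg 0$ to drop the effectivity assumption on $D'$).
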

\begin{proof}
Using the notation in Remark \ref{old_ideal}, the equivalent statement
$$I_k^\prime (D+D')=I_k^\prime (D)$$
follows from the definition and Remark~\ref{rem_isom_comp_with_filtrations}.
\end{proof}

\begin{remark}\label{always_nontrivial}
Note that $I_k (D) \subseteq \shO_X (Z - \lceil D \rceil)$ for all $k$, and so if $\lceil D \rceil \neq Z$, then one can never have 
$I_k (D) = \shO_X$. It is however still interesting to ask whether $I_k (B) = \shO_X$.
\end{remark}

\subsection{A global setting for the study of Hodge ideals}\label{global_setting}
We now consider a setting in which we can define global filtered $\Dmod_X$-modules that are locally isomorphic to the
$\big(\Mmod(h^{-\alpha}), F\big)$ discussed in the previous sections. 

Let $X$ be a smooth variety and $D=\frac{1}{\ell}H$ a $\QQ$-divisor, where $H$ is an integral divisor and $\ell$ is a positive integer.
The extra assumption we make here is that there is a line bundle $M$ such that 
$$\shO_X(H)\simeq M^{\otimes \ell}.$$ 
We denote by $U$ the complement of $Z={\rm Supp}(H)$
and by $j$ the inclusion $U\hookrightarrow X$.

Let $s\in \Gamma(X,M^{\otimes \ell})$ be a section whose zero locus is $H$. 
Since $s$ does not vanish on $U$, we may consider the section $s^{-1}\in\Gamma\big(U,(M^{-1})^{\otimes \ell}\big)$. Let $p\colon V\to U$ be the \'{e}tale cyclic cover corresponding 
to $s^{-1}$, hence
$$V\simeq {\bf Spec}\big(\shO_U\oplus M\oplus\ldots\oplus M^{\otimes(\ell-1)}\big).$$
We consider the filtered $\Dmod_X$-module $\Mmod=j_+p_+\shO_V$, that underlies
a mixed Hodge module. The  $\mu_{\ell}$-action on $V$, where $\lambda\in\mu_{\ell}$ acts on $M^{\otimes i}$ by multiplication with
$\lambda^{-i}$,
induces an eigenspace decomposition
$$\Mmod=\bigoplus_{i=0}^{\ell-1}\Mmod_i,$$
where $\lambda\in\mu_{\ell}$ acts on $\Mmod_i$ by multiplication with $\lambda^{-i}$.
We consider on each $\Mmod_i$ the induced filtration.

Note that if $X_0$ is an open subset of $X$ such that we have a trivialization $M\vert_{X_0}\simeq\shO_{X_0}$, and if 
via the corresponding trivialization of $M^{\otimes \ell}\vert_{X_0}$, the restriction $s\vert_{X_0}$ corresponds to $h_0\in\shO_X(X_0)$, then 
we have isomorphisms of filtered $\Dmod_{X_0}$-modules
$$\Mmod_i\simeq\Mmod(h_0^{-i/\ell})\quad\text{for}\quad 0\leq i\leq \ell-1.$$
We also see that the filtration on $\Mmod$ is the direct sum filtration, since this holds locally. 
Moreover, we have isomorphisms of $\shO_{X_0}$-modules
$$\Mmod_i\vert_{X_0}\simeq\shO_X(*Z)\vert_{X_0},$$
which glue to isomorphisms of $\shO_X$-modules
$$\Mmod_i\simeq M^{\otimes i}\otimes_{\shO_X}\shO_X(*Z)=j_*j^*M^{\otimes i}.$$
Via these isomorphisms, it follows from the definition of Hodge ideals (see also Remark \ref{old_ideal}) that we have
$$F_k\Mmod_i\simeq M^{\otimes i}\otimes_{\shO_X}I_k^\prime \left(i/\ell \cdot H\right)\otimes_{\shO_X}\shO_X\left((k+1)Z-\lceil i/\ell \cdot H\rceil\right)
\simeq$$ 
$$\simeq M^{\otimes i} (-H) \otimes_{\shO_X}I_k\left(i/\ell \cdot H\right)\otimes_{\shO_X}\shO_X\left(kZ+ H\right).$$

\subsection{A complex associated to simple normal crossing divisors}\label{complex_for_SNC}
We now discuss a complex that, as we will see later, gives
a filtered resolution of $\Mmod_r(h^{-\alpha})$ by filtered induced $\Dmod_X$-modules in the case when $h$ defines a simple normal crossing divisor.

Let $X$ be a smooth, $n$-dimensional, complex variety, $h\in\shO_X(X)$ nonzero, and $\alpha$ a nonzero rational number (we allow $\alpha$ 
to be either positive or negative). Let $D = \alpha \cdot {\rm div}(h)$. We denote by $Z$
the support of $D$, and assume that it has simple normal crossings.

Associated to $Z$ we have the following complex of right $\Dmod_X$-modules:
$$C^{\bullet}:\,\,0\to \Dmod_X\to\Omega_X^1(\log Z)\otimes_{\shO_X}\Dmod_X\to\cdots\to \Omega_X^n(\log Z)\otimes_{\shO_X}\Dmod_X\to 0,$$
placed in degrees $-n,\ldots,0$. We denote by $D_i\colon C^i\to C^{i+1}$ its differentials. 
If $x_1,\ldots,x_n$ are local coordinates on $X$, then
$$D_i(\eta\otimes P)=d\eta\otimes P+\sum_{i=1}^n (dx_i\wedge\eta)\otimes \partial_{x_i}P.$$
In fact $C^{\bullet}$ is a filtered complex, where
$$F_{p-n}C^i=\Omega_X^{i+n}(\log Z)\otimes_{\shO_X}F_{p+i}\Dmod_X.$$
This filtered complex is quasi-isomorphic to the filtered right $\Dmod_X$-module $\omega_X(*Z)$ 
corresponding to the filtered left $\Dmod_X$-module $\shO_X(*Z)$ (see \cite[Proposition~3.1]{MP1}, and 
\cite[Proposition~3.11(ii)]{Saito-MHM} for a more general statement).

Given $h$ and $\alpha$ as above, we also consider the filtered complex $C^{\bullet}_{h^{-\alpha}}$ consisting of the same sheaves, but 
with differential $C^i_{h^{-\alpha}}\to C^{i+1}_{h^{-\alpha}}$ given by
$$D_i-\big((\alpha\cdot {\rm dlog}(h)\wedge \bullet)\otimes {1_{\Dmod_X}}\big).\footnote
{In related settings, for instance involving the de Rham complex of $\Mmod(h^{-\alpha})$, this type of complex can already be found in the literature; see for instance \cite[\S6.3.11]{Bjork}.}$$
It is easy to see that this is indeed a filtered complex. 

Suppose now that we also have an effective divisor $T$ supported on $Z$. It is not hard to check that the formula for the map
$$C^i_{h^{-\alpha}}\to C^{i+1}_{h^{-\alpha}}$$
induces also a map
$$C^i_{h^{-\alpha}}(-T):=\shO_X(-T)\otimes_{\shO_X}\Omega^{i+n}_X(\log Z)\otimes_{\shO_X}\Dmod_X$$
$$\to 
C^{i+1}_{h^{-\alpha}}(-T):=\shO_X(-T)\otimes_{\shO_X}\Omega^{i+1+n}_X(\log Z)\otimes_{\shO_X}\Dmod_X.$$
This is due to the fact that if locally $T={\rm div}(u)$ and $\eta$ is a local section of $\Omega^{i+n}_X(\log Z)$,
then we can write 
$d(u\eta)=ud(\eta)+u\cdot {\rm dlog}(u)\wedge\eta$.
We thus obtain a filtered subcomplex $C^{\bullet}_{h^{-\alpha}}(-T)$ of $C^{\bullet}_{h^{-\alpha}}$.
We emphasize that this is \emph{not} obtained by tensoring $C^{\bullet}_{h^{-\alpha}}$ with $\shO_X(-T)$.

\begin{proposition}\label{prop_SNC_complex}
If no coefficient of $D - T$ lies in $\ZZ_{<0}$, then
the complex $C^{\bullet}_{h^{-\alpha}}(-T)$
is filtered quasi-isomorphic to $\big(h^{-\alpha}\omega_X(*Z), G_\bullet \big)$, where 
$$G_{k-n}h^{-\alpha}\omega_X(*Z)=0\quad\text{if}\quad k<0,$$
$$G_{-n}h^{-\alpha}\omega_X(*Z)=h^{-\alpha}\omega_X(Z- T)\quad\text{and}$$
$$G_{k-n}h^{-\alpha}\omega_X(*Z)=G_{-n}h^{-\alpha}\omega_X(*Z)\cdot F_k\Dmod_X\quad\text{if}\quad k>0.$$
\end{proposition}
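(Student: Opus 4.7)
The strategy is to adapt the argument of \cite[Proposition~3.1]{MP1}, which settles the case $\alpha = 0$ and $T = 0$, to the twisted and shifted setting. The plan is to proceed in two stages: first construct a natural map of filtered complexes $\phi\colon C^{\bullet}_{h^{-\alpha}}(-T) \to \bigl(h^{-\alpha}\omega_X(*Z), G_\bullet\bigr)$, where the target sits in cohomological degree $0$, and then verify that $\phi$ is a filtered quasi-isomorphism by a local Koszul-type computation in which the hypothesis on $D - T$ enters decisively.

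For the first stage, in degree $0$ I would define
$$\phi(v \otimes \eta \otimes P) = (h^{-\alpha} v\eta)\cdot P,$$
where $v$ is a local section of $\shO_X(-T)$, $\eta$ a local section of $\omega_X(\log Z) \simeq \omega_X(Z)$, and the dot denotes the right $\Dmod_X$-action on $h^{-\alpha}\omega_X(*Z)$. Using the explicit formula for this action recalled in \parref{coverings}, a short computation shows that $\phi$ kills the image of $d\colon C^{-1}_{h^{-\alpha}}(-T) \to C^0_{h^{-\alpha}}(-T)$: the extra term $-\alpha\,\mathrm{dlog}(h)\wedge\bullet$ in the differential is designed precisely so as to cancel the contribution of $\partial_{x_i}$ acting on the $h^{-\alpha}$-factor of the target. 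Compatibility with filtrations is then immediate: $\phi$ sends $F_{k-n}C^0_{h^{-\alpha}}(-T) = \shO_X(-T)\otimes\omega_X(Z)\otimes F_k\Dmod_X$ onto $h^{-\alpha}\omega_X(Z - T)\cdot F_k\Dmod_X = G_{k-n}$, which also shows surjectivity of $\phi$ onto the filtration $G_{\bullet}$ of the target.

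For the second stage, the statement is local. I would choose coordinates $x_1,\ldots,x_n$ in which $Z = \{x_1\cdots x_r = 0\}$, $h = \prod_{i=1}^r x_i^{a_i}$ (absorbing a unit via Remark~\ref{rem_indep_equations1}), and $T = \sum_{i=1}^r t_i\{x_i = 0\}$. Write $b_i = \alpha a_i - t_i$; the hypothesis reads $b_i \notin \ZZ_{<0}$ for $1\le i\le r$. Passing to the associated graded with respect to $F_\bullet\Dmod_X$ reduces the problem to one about complexes of $\gr^F_{\bullet}\Dmod_X \simeq \shO_X[\xi_1,\ldots,\xi_n]$-modules, where $\xi_i$ is the symbol of $\partial_{x_i}$. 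Expanding in the monomial basis $\{x^A\cdot \frac{dx_I}{x_I}\wedge dx_J\}$ with $I\subseteq\{1,\ldots,r\}$ and $J\subseteq\{r+1,\ldots,n\}$, the graded complex decomposes as a direct sum over $A\in\ZZ_{\ge 0}^n$ of finite Koszul complexes whose defining sequences are, up to sign and after the natural identifications,
$$(x_i\xi_i + b_i + A_i)_{i\in\{1,\ldots,r\}\setminus I}, \qquad (\xi_i)_{i\in\{r+1,\ldots,n\}\setminus J}.$$
The hypothesis $b_i\notin \ZZ_{<0}$ makes the first group of entries act as a regular sequence on the appropriate polynomial module, which gives Koszul exactness in negative degrees and identifies the $H^0$-piece with the correct graded quotient of $h^{-\alpha}\omega_X(*Z)$ filtered by $G_\bullet$.

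The main obstacle is the monomial bookkeeping in the second stage: one must verify that the twist by $-\alpha\,\mathrm{dlog}(h)$ produces exactly the shifts $b_i$ predicted above, and that these combine with the monomial degrees $A_i$ so that the non-vanishing hypothesis $b_i\notin \ZZ_{<0}$ is precisely what is needed to make every localized Koszul complex exact. Once this is carried out, the filtered quasi-isomorphism follows, and specializing to $\alpha = 0$, $T = 0$ recovers \cite[Proposition~3.1]{MP1}.
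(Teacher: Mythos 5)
The first stage of your plan (defining $\phi$ on $C^0_{h^{-\alpha}}(-T)$ by $v\otimes\eta\otimes P\mapsto (h^{-\alpha}v\eta)\cdot P$ and checking that it kills the image of the last differential) matches the paper's map $\varphi$. The gap is in the second stage, and it is a conceptual one rather than mere bookkeeping.

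You propose to pass to the associated graded with respect to $F_\bullet\Dmod_X$ and then decompose into Koszul complexes with defining entries $x_i\xi_i + b_i + A_i$, claiming that the non-integrality hypothesis makes these act as regular sequences. But the twist $-\alpha\,\mathrm{dlog}(h)\wedge\bullet\otimes 1_{\Dmod_X}$, like the $d\eta$ part of $D_i$, increases the differential-operator order by $0$, whereas the filtration jumps by one step at each cohomological degree; so both pieces vanish on $\gr^F$. Consequently $\gr^F_p C^\bullet_{h^{-\alpha}}(-T)$ is \emph{identical} to $\shO_X(-T)\otimes_{\shO_X}\gr^F_p C^\bullet$, an untwisted Koszul complex on the symbols $\xi_1,\dots,\xi_n$ with no shifts by $b_i$ or $A_i$; the constants you want to see never appear at the graded level. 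That observation is in fact how the paper disposes of the negative-degree cohomology of each $F_p$-subcomplex: it simply cites \cite[Proposition~3.1]{MP1}, and notably this step uses no hypothesis on $D-T$ at all. The entries $x_i\xi_i + b_i + A_i$ you wrote down belong to the \emph{ungraded} $\Dmod$-module picture (they are the operators $x_i\partial_i-\gamma_i-\dots$ appearing in $\mathrm{Im}(\psi_k)$), so your argument mixes two incompatible pictures.

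Where the hypothesis \emph{is} needed is in degree $0$: in showing that $\varphi$ is surjective onto all of $h^{-\alpha}\omega_X(*Z)$ (not just onto the $G_\bullet$-pieces), and in showing that $\mathrm{Ker}(\varphi_k)\subseteq\mathrm{Im}(\psi_k)$. For the former, the paper observes locally that $(h^{-\alpha}x_1^{\beta_1}\cdots x_r^{\beta_r}\eta)\cdot\Dmod_X = h^{-\alpha}\omega_X(*Z)$ precisely because $-\gamma_i-1\notin\ZZ_{\geq 0}$ for all $i$; for the latter, the paper runs an induction on $k$ expanding $P\in F_k\Dmod_X$ in the monomial basis $\partial^u x^v$, using the fact that the operators $x_i\partial_i-\gamma_i-\cdots$ have only nonzero "eigenvalues" on the relevant basis elements, again by the hypothesis. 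This is a direct, hands-on argument, not a Koszul regularity argument on the graded side. Your proposal doesn't reach this step; it stops at a claimed Koszul exactness that, as described, cannot be produced from $\gr^F$. To repair it, you would need to separate the two roles cleanly: (i) use the untwisted $\gr^F$ computation (or cite \cite[Proposition~3.1]{MP1}) for acyclicity of the $F_p$-subcomplexes in degrees $<0$; and (ii) give a distinct argument in degree $0$ where $D-T\notin\ZZ_{<0}$ enters through the invertibility of $x_i\partial_i-\gamma_i$ (and its higher analogues) on the relevant monomials.
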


\begin{proof}
It is immediate to check that the differential induced on ${\rm gr}^F_pC^{\bullet}_{h^{-\alpha}}(-T)$ does become 
equal to the differential $D_i$ twisted with the identity on $\shO_X(-T)$, and therefore for every $p$ we have
$${\rm gr}^F_pC^{\bullet}_{h^{-\alpha}}(-T)=\shO_X(-T)\otimes_{\shO_X}{\rm gr}^F_pC^{\bullet}.$$
In particular, we have 
$$H^iF_pC^{\bullet}_{h^{-\alpha}}(-T)=0\quad\text{for every}\quad p\in\ZZ\,\,\text{and}\,\, i\in\ZZ\smallsetminus\{0\},$$
by the result in \cite{MP1} quoted above. Consider now the morphism of right $\Dmod_X$-modules
$$\varphi\colon C^0_{h^{-\alpha}}(-T) = \omega_X (Z - T)\otimes_{\shO_X} \Dmod_X \longrightarrow 
h^{-\alpha}\omega_X(*Z)$$
given by 
$$\varphi(w\otimes\eta\otimes Q)=(h^{-\alpha}w\eta)Q.$$
We first check that this morphism is surjective. We do this locally, hence we may assume that we have a system of coordinates $x_1,\ldots,x_n$ on $X$ such that $\shO_X(-Z)$ is generated by $x_1\cdots x_r$ and $\shO_X(-T)$ by $x_1^{\beta_1}\cdots
x_r^{\beta_r}$. We also write $h=u x_1^{a_1}\cdots x_r^{a_r},$ where $u$ is an everywhere nonvanishing function, 
and define $\alpha_i=\alpha a_i$ and $\gamma_i=\alpha_i-\beta_i$ for all $i$. Note for later use that
$$\alpha\cdot {\rm dlog}(h)=\frac{\alpha}{u}du+\sum_{i=1}^r\alpha_i\frac{dx_i}{x_i}.$$

The surjectivity of $\varphi$ follows from the fact that 
\begin{equation}\label{eq0_prop_SNC_complex}
{\rm Im}(\varphi)=(h^{-\alpha}x_1^{\beta_1}\cdots x_r^{\beta_r}\eta)\cdot\Dmod_X=h^{-\alpha}\omega_X(*Z),
\end{equation}
where $$\eta={\rm dlog}(x_1)\wedge\ldots\wedge
{\rm dlog}(x_r)\wedge dx_{r+1}\wedge\ldots\wedge dx_n,$$ and the second equality in (\ref{eq0_prop_SNC_complex})
   is a consequence of the fact that $-\gamma_i-1\not\in \ZZ_{\geq 0}$ for all $i$, by assumption.

In order to complete the proof of the proposition it is enough to show
that, for every $k\geq 0$, the following sequence is exact:
$$
\shO_X(-T)\otimes\Omega_X^{n-1}(\log Z)\otimes F_{k-1}\Dmod_X\overset{\psi_k}\longrightarrow\shO_X(-T)\otimes \omega_X(Z)\otimes F_k\Dmod_X\overset{\varphi_k}\longrightarrow$$ 
$$\overset{\varphi_k}\longrightarrow G_{k-n} h^{-\alpha}\omega_X(*Z) \longrightarrow 0
$$
where $\varphi_k$ is the restriction of $\varphi$ to the $(k-n)$-th level of the filtration and $\psi_k$ is the restriction of the  differential of  $C^{\bullet}_{h^{-\alpha}}(-T)$.
The surjectivity of $\varphi_k$ is an immediate consequence of the surjectivity of $\varphi$ and the definition of the filtration on $h^{\alpha}\omega_X(*Z)$.

Keeping the above notation for the local coordinates on $X$,
it follows from the definition of $\psi_k$ that
$${\rm Im}(\psi_k)=\prod_{j=1}^rx_j^{\beta_j}\otimes \eta \otimes \big(\sum_{i=1}^r\big(x_i\partial_i-\gamma_i-\frac{\partial u}{\partial x_i}\cdot \frac{\alpha x_i}{u}\big)\cdot F_{k-1}\Dmod_X + $$
$$ + \sum_{i=r+1}^n
\big(\partial_i-\frac{\partial u}{\partial x_i}\cdot\frac{\alpha}{u}\big)\cdot F_{k-1}\Dmod_X\big)$$
and it is straightforward to see that this is contained in ${\rm Ker}(\varphi_k)$. We now prove by induction on $k$
that if $\varphi_k(x_1^{\beta_1}\cdots x_r^{\beta_r}\otimes\eta\otimes P)=0$ for some $P\in F_k\Dmod_X$, then $x_1^{\beta_1}\cdots x_r^{\beta_r}\otimes\eta \otimes P\in {\rm Im}(\psi_k)$. 
Note that the case $k=0$ is trivial.
Let's write $P=\sum_{u,v}c_{u,v}\partial^{u}x^{v}$, where $u$ and $v$ vary over $\ZZ_{\geq 0}^n$. 
After subtracting suitable terms from $P$, we may assume that whenever $c_{u,v}\neq 0$, we have $u_i=0$ for $i>r$.
Furthermore, note that if $u_i,v_i>0$ for some $i\leq r$, then we can write
$$\partial^{u}x^{v}=(x_i\partial_i-\gamma_i-\frac{\partial u}{\partial x_i}\cdot \frac{\alpha x_i}{u})A+B,$$
with both $A$ and $B$ of order $\leq k-1$. Therefore we may also assume that whenever $c_{u,v}\neq 0$ and $|u|:=\sum_iu_i=k$, we have
\begin{equation}\label{eq_prop_SNC_complex}
u_iv_i=0\quad\text{for}\quad 1\leq i\leq n.
\end{equation}
Since 
$$(h^{-\alpha}x_1^{\beta_1}\cdots x_r^{\beta_r}\eta)\partial^{u}x^{v}=(\text{non-zero constant}\footnote{Here we use again the fact that
$-\gamma_i-1\not\in\ZZ_{\geq 0}$ for all $i$.})\cdot (h^{-\alpha}x_1^{\beta_1}\cdots x_r^{\beta_r}\eta)x^{v-u},$$
and since (\ref{eq_prop_SNC_complex}) implies that for every $(u,v)$ and $(u',v')$ with $|u|=k$ and $c_{u,v},c_{u',v'}\neq 0$
we have $x^{v-u}\neq x^{v'-u'}$, we conclude that in fact $P\in F_{k-1}\Dmod_X$, hence we are done by induction.
\end{proof}



\subsection{The Hodge ideals of simple normal crossing divisors}\label{SNC}
In this section we show that the Hodge ideals of divisors with simple normal crossing support essentially depend only on the support of the divisor, and therefore can be computed as in \cite[\S8]{MP1}.

\begin{proposition}\label{Hodge_ideals_SNC}
Let $X$ be a smooth variety, and $D$ an effective divisor on $X$ with simple normal crossing  support $Z$. Then for all $k$ we have 
$$I_k(D)=I_k(Z)\otimes_{\shO_X} \shO_X(Z - \lceil D \rceil).$$
\end{proposition}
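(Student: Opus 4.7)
The plan is as follows. By Lemma \ref{periodicity}, applied to the effective integral divisor $\lceil D\rceil - Z$ supported on $Z$, one has $I_k(D) = I_k(B)\otimes \shO_X(Z-\lceil D\rceil)$, where $B := D + Z - \lceil D\rceil$ satisfies $\lceil B\rceil = Z$. So it suffices to prove the equality $I_k(D) = I_k(Z)$ under the hypothesis $\lceil D\rceil = Z$, i.e., all coefficients of $D$ lie in $(0,1]$. Since the statement is local, I may choose coordinates $x_1,\ldots,x_n$ with $Z = V(x_1\cdots x_r)$ and write $D = \alpha\cdot\mathrm{div}(h)$ with $h = x_1^{m_1}\cdots x_r^{m_r}$, $\alpha = 1/\ell$, and $\alpha_i := m_i/\ell \in (0,1]$.

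With this local setup, apply Proposition \ref{prop_SNC_complex} with $T = Z$. The hypothesis holds because each coefficient of $D - Z$ lies in $(-1,0]$, avoiding $\ZZ_{<0}$. The proposition furnishes a filtered quasi-isomorphism
\[
\bigl(C^{\bullet}_{h^{-\alpha}}(-Z),\,F\bigr)\;\simeq\;\bigl(h^{-\alpha}\omega_X(*Z),\,G_{\bullet}\bigr),
\]
where $G_{\bullet}$ is generated at level $-n$ by $h^{-\alpha}\omega_X$ under the action of $F_{\bullet}\Dmod_X$.

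The crux of the argument, which I expect to be the main obstacle, is to identify $G_{\bullet}$ with the Hodge filtration on $\Mmod_r(h^{-\alpha})$ coming from the mixed Hodge module structure. I plan to deduce this from the cover construction of \S\ref{coverings}: choose a $\mu_{\ell}$-equivariant resolution of the cover $V\to X$ so that the preimage of $Z$ becomes an SNC divisor, invoke Saito's strictness for the proper pushforward, and decompose under the $\mu_{\ell}$-action via Lemmas \ref{lem_decomp1} and \ref{decomp_filtration} to isolate the $\Mmod(h^{-\alpha})$-isotypic summand. The resulting filtration on that summand matches $G_{\bullet}$ read off from the complex, extending \cite[Proposition~3.1]{MP1} from the reduced setting to the present twisted setting.

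Once $G_{\bullet} = F_{\bullet}$ is established, the Hodge filtration on $\Mmod(h^{-\alpha})$ is generated at level $0$ over $F_{\bullet}\Dmod_X$ by $\shO_X\cdot h^{-\alpha}$. By \cite[Proposition~8.2]{MP1}, $F_{\bullet}\shO_X(*Z)$ is likewise generated at level $0$ by $\shO_X(Z)$. Using the defining formulas
\[
F_k\Mmod(h^{-\alpha}) = I'_k(D)\otimes\shO_X(kZ)\cdot h^{-\alpha},\qquad F_k\shO_X(*Z) = I_k(Z)\otimes\shO_X((k+1)Z),
\]
and observing that the level-$0$ generators $\shO_X\cdot h^{-\alpha}$ and $\shO_X(Z)$ differ precisely by the twist $\shO_X(-Z)\cdot h^{-\alpha}$, a direct comparison extracts $I'_k(D) = I_k(Z)$. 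Since $\lceil D\rceil = Z$ forces $I_k(D) = I'_k(D)$, we conclude $I_k(D) = I_k(Z)$, which together with the initial reduction gives the proposition.
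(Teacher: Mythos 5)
Your high-level strategy matches the paper's: reduce to $\lceil D\rceil = Z$ via Lemma~\ref{periodicity}, work locally in coordinates, and compare with $I_k(Z)$ through Proposition~\ref{prop_SNC_complex}. You have also correctly located the crux of the argument, namely showing that the filtration $G_\bullet$ produced by Proposition~\ref{prop_SNC_complex} is the Hodge filtration on $\Mmod_r(h^{-\alpha})$. However, this is precisely where your proposal has a genuine gap. Proposition~\ref{prop_SNC_complex} only tells you that $C^\bullet_{h^{-\alpha}}(-Z)$ is filtered quasi-isomorphic to $h^{-\alpha}\omega_X(*Z)$ equipped with the \emph{ad hoc} filtration $G_\bullet$ generated at level $-n$ by $h^{-\alpha}\omega_X$; it says nothing about whether $G_\bullet$ coincides with the Hodge filtration coming from the mixed Hodge module structure of \S\ref{coverings}. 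Your plan --- pick a $\mu_\ell$-equivariant resolution, push forward, invoke strictness, decompose --- is the right skeleton, but it stops exactly where the work begins: why should the pushed-forward filtration agree with $G_\bullet$? For a general equivariant resolution this is far from automatic. The paper's actual argument first reduces, via Remark~\ref{rem_behavior_etale}, to the model case $X=\mathrm{Spec}\,\CC[x_1,\ldots,x_n]$ with all $x_i$ among the equations of $Z$, then takes a \emph{toric} resolution $Y$ of the normalization $\widetilde{W}$ of the cyclic cover $W$. This toric structure is used essentially: $\Omega^1_Y(\log E_Y)\simeq M'\otimes_\ZZ\shO_Y$ and $R^if_*\shO_Y=0$ for $i>0$, which (via the projection formula) collapses $f_*(A^\bullet_Y\otimes\Dmod_{Y\to X})$ to an explicit complex $B^\bullet$ on $X$ with no higher cohomology, and the $\mu_\ell$-isotypic pieces $B^\bullet_j$ are identified termwise with the complexes of Proposition~\ref{prop_SNC_complex} (with $T=0$). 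Only then does strictness deliver the Hodge filtration in closed form. Without this computation, ``the resulting filtration on that summand matches $G_\bullet$'' is an unjustified assertion, not a proof.

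A smaller, fixable imprecision: in your last paragraph you say that since both filtrations are generated at level $0$ with generators differing by a twist by $\shO_X(-Z)$, a ``direct comparison'' extracts $I_k'(D)=I_k(Z)$. But the $\shO_X$-linear map $\shO_X(*Z)\to\Mmod(h^{-\alpha})$, $w\mapsto (wg)h^{-\alpha}$ (with $g$ a local equation of $Z$), which matches the level-$0$ generators, is \emph{not} $\Dmod_X$-equivariant, since $\partial_i$ acts by $-1/x_i$ on $1/g$ but by $-\alpha_i/x_i$ on $h^{-\alpha}$. The conclusion does hold, because for each multi-index $u$ supported on the components of $Z$ one checks
\[
\partial^u\!\left(\tfrac{1}{g}\right)=c_u\cdot\tfrac{1}{g\,x^u},\qquad \partial^u(h^{-\alpha})=c_u'\cdot\tfrac{h^{-\alpha}}{x^u},
\]
with $c_u, c_u'\in\CC^*$ nonzero (the latter because $\alpha_i>0$), so the two $\shO_X$-module orbits agree. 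But that is a computation one must carry out, not something that follows from the generators ``differing by a twist.'' The paper does an equivalent explicit computation to read off the monomial generators of $I_k'(D)$ and then compares with \cite[Proposition~8.2]{MP1}.
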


\begin{proof}
Equivalently, we need to show that $I_k^\prime (D) = I_k(Z)$ for all $k$. The assertion is local, hence we may assume that we have coordinates $x_1,\ldots,x_n$
on $X$ such that $Z=H_1+\cdots+H_r$, where $H_i$ is defined by $x_i=0$. The morphism $X\to \CC^r$ given by 
$(x_1,\ldots,x_r)$ is smooth, hence
using Remark~\ref{rem_behavior_etale} we see that it is enough to prove the proposition when 
$X= \Spec\,\CC[x_1,\ldots,x_n]$ and $D=\sum_{i=1}^n\alpha_iH_i$,
where $H_i= {\rm div}(x_i)$ and $\alpha_i>0$. Let $\ell$ be the smallest positive integer such that all $a_i:=\ell\alpha_i$ are integers. The assertion to be proved is trivial 
when $\ell=1$, hence from now on we assume $\ell\geq 2$. 
Consider the Cartesian diagram 
$$
\begin{tikzcd}
V \rar{j_0}\dar{p} & W\dar{g} \\
U\rar{j} & X,
\end{tikzcd}
$$
where 
$$j_0\colon V=\Spec\,\CC[x_1^{\pm 1},\ldots,x_n^{\pm 1},y]/(y^{\ell}-x_1^{-a_1}\cdots x_n^{-a_n})$$
$$\to W=\Spec\,\CC[x_1,\ldots,x_n,z]/(z^{\ell}-x_1^{a_1}\cdots x_n^{a_n}),$$
with $j_0^*(z)=y^{-1}$. 
We will make use of some standard facts about cyclic covers with respect to simple normal crossing divisors,
exploiting the toric variety structure on the normalization of $W$. For basic facts regarding toric varieties, 
we refer to 
\cite{Fulton}.

Let $N$ be the lattice $\ZZ^n$ and $M$ its dual. We also consider the lattice 
$$N'=\{(v_1,\ldots,v_{n+1})\in\ZZ^{n+1}\mid a_1v_1+\cdots+a_nv_n=\ell v_{n+1}\}$$
and its dual 
$$M'=\ZZ^{n+1}/\ZZ\cdot (a_1,\ldots,a_n,-\ell).$$
Note that we have an injective lattice map $N'\to N$, with finite cokernel, induced by the projection onto the first $n$ components, and the dual map $M\to M'$
is again injective, with finite cokernel. In fact, we have an isomorphism $M'/M\simeq\ZZ/ \ell\ZZ$ that maps the class of $(u_1,\ldots,u_{n+1})\in M'$
to the class of $u_{n+1}$ in $\ZZ/\ell \ZZ$. 

We thus have an isomorphism $N'_{\RR}\simeq N_{\RR}=\RR^n$. The strongly convex cone $\sigma=\RR_{\geq 0}^n$ in 
$N_{\RR}=\RR^n$ gives the toric variety $X=\CC^n$. As a cone in $N'_{\RR}$, 
 $\sigma$ gives an affine toric variety $\widetilde{W}$, and the lattice map $N'\to N$ corresponds to a toric map
$\psi\colon \widetilde{W}\to X$. Note that we have a morphism of $\shO(X)$-algebras 
$\shO(W)\to \shO(\widetilde{W})$ that maps $x_i$ to the element of $\CC[\sigma^{\vee}\cap M']$
corresponding to the class of the $i$-th element of the standard basis of $\ZZ^n$, and $z$ to the class of $(0,\ldots,0,1)$. It is easy to check that
if we denote by $\lfloor \gamma\rfloor$ the largest integer $\leq\gamma$, then
\begin{equation}\label{eq_decomp_W}
\shO(\widetilde{W})=\bigoplus_{0\leq j\leq\ell-1}\shO (X)x_1^{-\lfloor j\alpha_1\rfloor}\cdots x_n^{-\lfloor j\alpha_n\rfloor}z^j,
\end{equation}
and consequently to deduce that $\shO(\widetilde{W})$ is integral over $\shO(W)$. As the coordinate ring of a toric variety, $\shO(\widetilde{W})$
is normal, hence it is the integral closure of $\shO(W)$ in its field of fractions.
Moreover, since $\widetilde{W}$ is a toric variety, we may choose a toric resolution of singularities $Y\to \widetilde{W}$, and let $f\colon Y\to X$ be the composition. Since the map
$Y\to W$ is an isomorphism over the complement of $g^{-1} (\sum H_i)$, it follows that there is an open embedding $\iota\colon V\hookrightarrow Y$ such that $f\circ\iota =j\circ p$. The support $E_Y$ of $Y\smallsetminus
\iota(V)$ is the sum of all prime toric divisors on $Y$. 

The action of the torus $T_{M'}={\rm Spec}\,\CC[M']$ on $\widetilde{W}$ induces an action of the finite group
${\rm Spec}\,\CC[M'/M]\simeq {\rm Spec}\,\CC[\ZZ/\ell\ZZ]=\mu_{\ell}$ on 
$\widetilde{W}$. This is the action induced on the normalization $\widetilde{W}$ by the $\mu_{\ell}$-action on $W$ that we discussed in \S\ref{coverings}. In particular,
the toric resolution $Y\to\widetilde{W}$ is automatically equivariant. Note that in the decomposition (\ref{eq_decomp_W}), an element $\lambda\in\mu_{\ell}$ acts on the 
summand corresponding to $j$ by multiplication with $\lambda^j$.

The equality $f\circ\iota =j\circ p$ implies that we have an isomorphism of filtered $\Dmod_X$-modules
$$j_+p_+\shO_V\simeq f_+\iota_+\shO_V=f_+\shO_Y(*E_Y).$$
As usual, in order to compute the push-forward of $\shO_Y(*E_Y)$, it is more convenient to work with right $\Dmod$-modules.
Recall that there is a complex of right $\Dmod_Y$-modules
$$A^\bullet = A_Y^{\bullet}:\,\,\,0 \rightarrow \Dmod_Y \rightarrow \Omega_Y^1(\log E_Y) \otimes_{\shO_Y} \Dmod_Y \rightarrow \cdots  \rightarrow \omega_Y(E_Y) \otimes_{\shO_Y} \Dmod_Y \rightarrow 0$$
located in degrees $-n,\ldots,0$, that is filtered quasi-isomorphic to $\omega_Y(*E_Y)$; see the beginning of \S\ref{complex_for_SNC}. 
Since $Y$ is a toric variety, we have a canonical isomorphism 
$\Omega_Y^1(\log E_Y)\simeq M'\otimes_{\ZZ}\shO_Y$ (see \cite[Section~4.3]{Fulton}). 
We will also consider the corresponding complex on $X$:
$$A_X^{\bullet}:\,\,\,0\longrightarrow\Dmod_X\longrightarrow M\otimes_{\ZZ}\Dmod_X\longrightarrow \cdots\longrightarrow \wedge^nM\otimes_{\ZZ}\Dmod_X\longrightarrow 0.$$

It follows from the definition that,  forgetting about the filtration, we have
$$f_+\omega_Y(*E_Y)=\derR f_*(A^{\bullet}\otimes_{\Dmod_Y}\Dmod_{Y\to X}).$$
Note that $\Dmod_{Y\to X}=f^* \Dmod_X$ as $\shO_Y$-modules, hence the projection formula implies
$$R^if_*(A^{p-n}\otimes_{\Dmod_Y}\Dmod_{Y\to X})\simeq \wedge^pM'\otimes_{\ZZ}R^if_*\shO_Y\otimes_{\shO_X}\Dmod_X=0$$
for $i>0$, since $f$ is the composition of a finite map with a toric resolution. Therefore 
$f_+\omega_Y(*E_Y)$ is represented by the complex $B^{\bullet}$, where
$$B^{p-n}=\wedge^pM'\otimes_{\ZZ}\psi_*\shO_{\widetilde{W}}\otimes_{\shO_X}\Dmod_X.$$
In order to describe the differential of this complex, it is convenient to use the isomorphism $M_{\QQ}\simeq M'_{\QQ}$
and the decomposition (\ref{eq_decomp_W}). With a little care, it follows from the definitions that if we put
$$B^{p-n}_j=\wedge^pM_{\QQ}\otimes_{\QQ}\shO_X\cdot x_1^{-\lfloor j\alpha_1\rfloor}\cdots x_n^{-\lfloor j\alpha_n\rfloor}z^j\otimes
_{\shO_X}\Dmod_X,$$
then $B^{\bullet}$ decomposes as the direct sum of the subcomplexes $B^{\bullet}_j$, for $0\leq j\leq \ell-1$. 
Furthermore, if we identify each $B^{p-n}_j$ in the obvious way with $A^{p-n}_X$, then the differential 
$$\delta_{B_j}^{p-n}\colon \wedge^pM_{\QQ}\otimes_{\QQ}\Dmod_X\to \wedge^{p+1} M_{\QQ}\otimes_{\QQ}\Dmod_X$$
is given by 
$$\delta_{B_j}^{p-n}=\delta_{A_X}^{p-n}+(w_j \wedge -)\otimes {\rm Id}_{\Dmod_X},$$
where $\delta_{A_X}$ is the differential on $A_X^{\bullet}$ and
$$w_j=(w_{j,1},\ldots,w_{j,n}),\quad\text{with}\quad w_{j,i}=j\alpha_i-\lfloor j\alpha_i\rfloor.$$
It follows from Proposition~\ref{prop_SNC_complex} that we have a morphism
$$B_j^0\to \Mmod_r(x_1^{w_{j,1}}\cdots x_n^{w_{j,n}})$$ 
that induces a quasi-isomorphism
$$B_j^{\bullet}\to \Mmod_r(x_1^{w_{j,1}}\cdots x_n^{w_{j,n}})$$
(see also Remark~\ref{twist_individual}).

We now bring the filtrations into the picture. It follows from Saito's strictness results (see the discussion in 
\S\ref{Hodge_modules}; cf. also \cite[\S4,~\S6]{MP1}) that 
$$F_kf_+\omega_Y(*E_Y)={\rm Im}\big(\derR f_*F_k(A^{\bullet}\otimes_{\Dmod_Y}\Dmod_{Y\to X})\to \derR f_*(A^{\bullet}\otimes_{\Dmod_Y}\Dmod_{Y\to X})\big).$$
Arguing as above, we deduce that 
$$F_kf_+\omega_Y(*E_Y)={\rm Im}\big(f_*F_k(A^{\bullet}\otimes_{\Dmod_Y}\Dmod_{Y\to X})\to f_*(A^{\bullet}\otimes_{\Dmod_Y}\Dmod_{Y\to X})\big).$$
In other words, $(f_+\omega_Y(*E_Y), F)$ is represented by the filtered complex $B^{\bullet}$, and using Proposition~\ref{prop_SNC_complex},
we conclude that 
$$f_+\omega_Y(*E_Y)\simeq \bigoplus_{j=0}^{\ell-1}\Mmod_r(x_1^{w_{j,1}}\cdots x_n^{w_{j,n}}),$$
where the filtration on $\Mmod_r(x_1^{w_{j,1}}\cdots x_n^{w_{j,n}})$ is given by 
$$F_{-n}\Mmod_r(x_1^{w_{j,1}}\cdots x_n^{w_{j,n}})=x_1^{w_{j,1}}\cdots x_n^{w_{j,n}}\omega_X(Z)\quad\text{and}$$
$$F_{k-n}\Mmod_r(x_1^{w_{j,1}}\cdots x_n^{w_{j,n}})=F_{-n}\Mmod_r(x_1^{w_{j,1}}\cdots x_n^{w_{j,n}})\cdot F_k\Dmod_X\quad\text{for}\quad k\geq 1.$$

By comparing the $\mu_{\ell}$-actions, we conclude that the summand 
$\Mmod_r(x_1^{-\alpha_1}\cdots x_n^{-\alpha_n})$ on which an element $\lambda\in\mu_{\ell}$ acts by multiplication with $\lambda^{-1}$
corresponds to $j=\ell-1$. Therefore the filtration on 
$\Mmod(x_1^{-\alpha_1}\cdots x_n^{-\alpha_n})$ is given by
$$F_{-n}\Mmod_r(x_1^{-\alpha_1}\cdots x_n^{-\alpha_n})=(x_1^{-\alpha_1}\cdots x_n^{-\alpha_n})x_1^{\lceil\alpha_1\rceil}\cdots x_n^{\lceil\alpha_n\rceil}\omega_X(Z)\quad\text{and}$$
$$F_{k-n}\Mmod_r(x_1^{-\alpha_1}\cdots x_n^{-\alpha_n})=F_{-n}\Mmod_r(x_1^{-\alpha_1}\cdots x_n^{-\alpha_n})\cdot F_k\Dmod_X\quad\text{for}\quad k\geq 1.$$
It is now a straightforward computation to see that
$I_k^\prime(D)$ is the ideal generated by the monomials $\prod_{i=1}^nx_i^{c_i}$, where $0\leq c_i\leq k$ for all $i$ and $\sum_ic_i=(n-1)k$. 
This coincides with $I_k (Z)$ according to \cite[Proposition~8.2]{MP1},
completing the proof of the proposition.
\end{proof}

\subsection{Computation in terms of a log resolution}\label{log_resolution}
We use the results of the previous two sections in order to describe Hodge ideals of $\QQ$-divisors in terms of log resolutions.
Let $X$ be a smooth variety, $h\in\shO_X(X)$ a nonzero function, $H = {\rm div}(h)$, and $\alpha\in\QQ_{>0}$. We are interested in computing $I_k(D)$, where $D=\alpha H$.
As always, let $Z={\rm Supp}(D)$ and $\beta = 1 - \alpha$.

Let $f\colon Y\to X$ be a log resolution of the pair $(X, D)$ that is an isomorphism over $U=X\smallsetminus Z$,  
and denote $g=h\circ f\in\shO_Y(Y)$. 
We fix a positive integer $\ell$ such that $\ell\alpha\in\ZZ$. 
As usual, we consider 
$$p\colon V={\bf Spec}\,\shO_U[y]/(y^{\ell}-h^{-\ell\alpha})\longrightarrow U$$ 
and the inclusion $j\colon U\hookrightarrow X$. By assumption, we also have an open immersion
$\iota\colon U\hookrightarrow Y$ such that $f\circ\iota=j$. 
 By considering the decompositions of 
$$j_+p_+\shO_V\simeq f_+\iota_+p_+\shO_V$$
into isotypical components, we conclude that we have a filtered isomorphism
\begin{equation}\label{birational}
\Mmod(h^{-\alpha})\simeq f_+\Mmod(g^{-\alpha}).
\end{equation}

We now denote $G=f^*D$, and consider on $Y$ the complex introduced in \S\ref{complex_for_SNC}:
$$C^{\bullet}_{g^{-\alpha}}(-\lceil G\rceil):\,\,0\to\shO_Y(-\lceil G\rceil)\otimes_{\shO_Y} \Dmod_Y\to\shO_Y(-\lceil G\rceil)\otimes_{\shO_Y}\Omega^1_Y(\log E)\otimes_{\shO_Y}\Dmod_Y$$
$$\to\cdots\to\shO_Y(-\lceil G\rceil)\otimes_{\shO_Y}\omega_Y(E)\otimes_{\shO_Y}\Dmod_Y\to 0,$$
where $E = (f^*D)_{\rm red}$.
This is placed in degrees $-n,\ldots,0$, and if $x_1,\ldots,x_n$ are local coordinates on $Y$, then its differential is given by
$$\eta\otimes Q\to d\eta\otimes Q+\sum_{i=1}^n (dx_i\wedge \eta)\otimes\partial_iQ-\big(\alpha\cdot {\rm dlog}(g)\wedge\eta\big)\otimes Q.$$


\begin{theorem}\label{formula_log_resolution}
With the above notation, the following hold:
\begin{enumerate}
\item[i)] For every $p\neq 0$ and every $k\in \ZZ$, we have
$$R^pf_*\big(C^{\bullet}_{g^{-\alpha}}(-\lceil G\rceil)\otimes_{\Dmod_Y}\Dmod_{Y\to X}\big)=0$$
and  
$$R^pf_*F_k\big(C^{\bullet}_{g^{-\alpha}}(-\lceil G\rceil)
\otimes_{\Dmod_Y}\Dmod_{Y\to X}\big)=0.$$
\item[ii)] For every $k\in\ZZ$, the natural inclusion induces an injective map
$$ R^0f_*F_k\big(C^{\bullet}_{g^{-\alpha}}(-\lceil G\rceil)\otimes_{\Dmod_Y}\Dmod_{Y\to X}\big)\hookrightarrow R^0f_*\big(C^{\bullet}_{g^{-\alpha}}(-\lceil G\rceil)\otimes_{\Dmod_Y}\Dmod_{Y\to X}\big).$$
\item[iii)] We have a canonical isomorphism
$$R^0f_*\big(C^{\bullet}_{g^{-\alpha}}(-\lceil G\rceil)\otimes_{\Dmod_Y}\Dmod_{Y\to X}\big)\simeq \Mmod_r(h^{-\alpha})$$ 
that induces for every $k\in\ZZ$ an isomorphism
$$R^0f_*F_{k-n}\big(C^{\bullet}_{g^{-\alpha}}(-\lceil G\rceil)\otimes_{\Dmod_Y}\Dmod_{Y\to X}\big)\simeq h^{-\alpha}\omega_X\big((k+1)Z-\lceil D\rceil\big)\otimes_{\shO_X}I_k^\prime (D)\simeq$$
$$\simeq h^{\beta}\omega_X\big(kZ + H \big)\otimes_{\shO_X}I_k(D).$$
\end{enumerate}
\end{theorem}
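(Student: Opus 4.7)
The plan is to combine the filtered resolution supplied by \propositionref{prop_SNC_complex} with Saito's strictness of the Hodge filtration under the proper pushforward $f_+$, using the filtered isomorphism $(\Mmod_r(h^{-\alpha}), F) \simeq f_+(\Mmod_r(g^{-\alpha}), F)$ from \eqref{birational}. All three statements will be read off simultaneously.

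First, I would apply \propositionref{prop_SNC_complex} on $Y$ to the function $g$ with the choice $T=\lceil G\rceil$, where $G=f^*D$. The hypothesis there (no coefficient of $G-\lceil G\rceil$ lies in $\ZZ_{<0}$) is automatic because these fractional parts lie in $(-1,0]$. This yields a filtered quasi-isomorphism of the complex $C^{\bullet}_{g^{-\alpha}}(-\lceil G\rceil)$ with $g^{-\alpha}\omega_Y(*E)$, equipped with the filtration generated in degree $-n$ by $g^{-\alpha}\omega_Y(E-\lceil G\rceil)$. I then need to identify this filtration with the canonical Hodge filtration $F_{\bullet}\Mmod_r(g^{-\alpha})$: this is precisely the SNC computation carried out in the proof of \propositionref{Hodge_ideals_SNC}, where the lowest piece and the generation rule are shown to have exactly this form.

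Second, I would compute the pushforward. Since each term of $C^{\bullet}_{g^{-\alpha}}(-\lceil G\rceil)$ is an induced right $\Dmod_Y$-module $\mathcal{F}_i \otimes_{\shO_Y}\Dmod_Y$, the tensor product $C^{\bullet}_{g^{-\alpha}}(-\lceil G\rceil)\otimes_{\Dmod_Y}\Dmod_{Y\to X}$ is already $\mathcal{F}_i\otimes_{\shO_Y} f^*\Dmod_X$, so no derived tensor is needed. By Step~1 and the definition of $f_+$ for filtered right $\Dmod$-modules, this complex computes $f_+\bigl(\Mmod_r(g^{-\alpha}),F\bigr)$. By \eqref{birational} the latter is filtered-isomorphic to $(\Mmod_r(h^{-\alpha}),F)$, concentrated in degree $0$. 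Applying Saito's strictness (the injectivity in \eqref{strictness_formula}) to the Hodge $\Dmod$-module underlying $j_+p_+\QQ_V^H[n]$ and to its $\mu_\ell$-isotypic summand, I obtain at once:
\begin{itemize}
\item part (i), since the strictness map $R^pf_*F_k(\cdots)\hookrightarrow R^pf_*(\cdots)$ has target $H^p f_+\Mmod_r(g^{-\alpha})=0$ for $p\neq 0$, forcing the source to vanish as well;
\item part (ii), which is just the $p=0$ case of this same injectivity;
\item the canonical isomorphism of (iii) on the level of underlying $\Dmod$-modules, together with the equality $F_k\Mmod_r(h^{-\alpha})=\mathrm{Im}\bigl(R^0f_*F_k(\cdots)\to R^0f_*(\cdots)\bigr)$, i.e.\ the filtration on $H^0f_+$ defined via strictness coincides with the canonical one.
\end{itemize}

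Finally, to conclude (iii) in the stated form, I would translate $F_{k-n}\Mmod_r(h^{-\alpha})$ into the language of Hodge ideals. By \propositionref{prop_for_def} and the definition recorded in Remark~\ref{old_ideal},
$$F_{k-n}\Mmod_r(h^{-\alpha})\simeq h^{-\alpha}\omega_X\bigl((k+1)Z-\lceil D\rceil\bigr)\otimes_{\shO_X}I_k^{\prime}(D),$$
and the filtered isomorphism $\Mmod(h^{-\alpha})\to\Mmod(h^{\beta})$, $wh^{-\alpha}\mapsto (wh^{-1})h^{\beta}$, together with the relation $I_k(D)=I_k^{\prime}(D)\otimes\shO_X(Z-\lceil D\rceil)$, rewrites this as $h^{\beta}\omega_X(kZ+H)\otimes_{\shO_X}I_k(D)$. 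The main obstacle is the bookkeeping in Step~1: one has to verify carefully that the abstract filtration $G_{\bullet}$ produced by \propositionref{prop_SNC_complex} with $T=\lceil G\rceil$ matches, term by term in local toric coordinates, the canonical Hodge filtration on $\Mmod_r(g^{-\alpha})$ given in the proof of \propositionref{Hodge_ideals_SNC}. Once this matching is in hand, strictness does all the remaining work and the three parts of the theorem follow in parallel.
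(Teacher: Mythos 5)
Your proposal is correct and follows essentially the same route as the paper's own argument: apply \propositionref{prop_SNC_complex} with $T=\lceil G\rceil$, identify the resulting filtration with the Hodge filtration on $\Mmod_r(g^{-\alpha})$ via the computation in the proof of \propositionref{Hodge_ideals_SNC}, and then invoke Saito's strictness together with the filtered isomorphism (\ref{birational}) to read off all three parts simultaneously. The only difference is presentational (you front-load the resolution before discussing strictness, while the paper states strictness first); the mathematical content and the reliance on the proof, rather than just the statement, of \propositionref{Hodge_ideals_SNC} are the same.
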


\begin{proof}
It follows from Lemma~\ref{decomp_filtration}, and from the definition of its filtration, that $\Mmod_r(g^{-\alpha})$ is 
a direct summand of a right Hodge $\Dmod$-module on $Y$. By Saito's strictness of the filtration of (push-forwards of) such $\Dmod$-modules, it follows that for all $k,p\in\ZZ$ the canonical map
$$R^pf_*F_k\big(\Mmod_r(g^{-\alpha})\overset{\derL}{\otimes}_{\Dmod_Y}\Dmod_{Y\to X}\big)\to R^pf_*\big(\Mmod_r(g^{-\alpha})\overset{\derL}{\otimes}_{\Dmod_Y}\Dmod_{Y\to X}\big)$$
is injective, and its image is equal to
$$F_kR^pf_*\big(\Mmod_r(g^{-\alpha})\overset{\derL}{\otimes}_{\Dmod_Y}\Dmod_{Y\to X}\big)$$
(see the discussion in \S\ref{Hodge_modules}).

On the other hand, note that if write 
$G=\alpha\cdot {\rm div}(g)=\sum_i\alpha_iE_i$, then $-\lceil\alpha_i\rceil+\alpha_i\not\in\ZZ_{<0}$ for all $i$.
We may thus apply Proposition~\ref{prop_SNC_complex} for the divisor $G$, with $T=\lceil G\rceil$.
Using Proposition~\ref{Hodge_ideals_SNC} as well,
we see that $C^{\bullet}_{g^{-\alpha}}(-\lceil G\rceil)$ is filtered quasi-isomorphic to
$\Mmod_r(g^{-\alpha})$, hence 
$$R^pf_*\big(C^{\bullet}_{g^{-\alpha}}(-\lceil G\rceil)\otimes_{\Dmod_Y}\Dmod_{Y\to X}\big)\simeq R^pf_*\big(\Mmod_r(g^{-\alpha})\overset{\derL}{\otimes}_{\Dmod_Y}\Dmod_{Y\to X}\big)\quad\text{and}$$
$$R^pf_*F_k\big(C^{\bullet}_{g^{-\alpha}}(-\lceil G\rceil)\otimes_{\Dmod_Y}\Dmod_{Y\to X}\big)\simeq 
R^pf_*F_k\big(\Mmod_r(g^{-\alpha})\overset{\derL}{\otimes}_{\Dmod_Y}\Dmod_{Y\to X}\big).$$

Finally, by the definition of push-forward for right $\Dmod$-modules we have
$$R^pf_*\big(\Mmod_r(g^{-\alpha})\overset{\derL}{\otimes}_{\Dmod_Y}\Dmod_{Y\to X}\big)\simeq 
H^pf_+\Mmod_r(g^{-\alpha}),$$
and by ($\ref{birational}$) this is $0$ if $p\neq 0$, and is canonically isomorphic to $\Mmod_r (h^{-\alpha})$ if $p=0$. 
The assertions in the proposition follow by combining all these facts.
\end{proof}
 
\begin{remark}[{\bf Local vanishing}]
The statement in Theorem \ref{formula_log_resolution} i) is a generalization of the Local Vanishing theorem 
for multiplier ideals \cite[Theorem 9.4.1]{Lazarsfeld}, in view of the calculation in Proposition \ref{formula_I_0} below.
\end{remark}

As a consequence of the vanishing statements in Theorem~\ref{formula_log_resolution}(i), provided by strictness, we deduce the following local Nakano-type vanishing result, first obtained by Saito \cite[Corollary 3]{Saito-LOG} when $D$ is reduced; cf. Corollary \ref{intro_local_vanishing} in the Introduction and the discussion following it. 

\begin{corollary}\label{cor_vanishing}
Let $D$ be an effective $\QQ$-divisor on the smooth variety $X$ and $f\colon Y\to X$ a log resolution of $(X,D)$ that is an isomorphism over
$X\smallsetminus {\rm Supp}(D)$. If $E= (f^*D)_{{\rm red}}$, then
$$R^q f_*\big(\shO_Y(-\lceil f^*D\rceil)\otimes_{\shO_Y}\Omega_Y^p(\log E)\big)=0\quad\text{for}\quad p+q>n=\dim(X).$$
\end{corollary}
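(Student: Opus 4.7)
The plan is to deduce the corollary from Theorem \ref{formula_log_resolution}(i) by unwinding the filtered complex $F_{k-n}(C^\bullet_{g^{-\alpha}}(-\lceil G\rceil) \otimes_{\Dmod_Y}\Dmod_{Y\to X})$ via its stupid-truncation spectral sequence, and then running a descending induction on $p$. The claim is local on $X$, so I may assume $D = \alpha \cdot \mathrm{div}(h)$ with $h \in \shO_X(X)$ nonzero and $\alpha \in \QQ_{>0}$, working in the setup of \S\ref{log_resolution} (with $g = h \circ f$ and $G = f^*D$).

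Fix $k \ge 0$. Using the standard identification $F_m\Dmod_{Y\to X} = f^*F_m\Dmod_X$, the subcomplex $F_{k-n}(C^\bullet_{g^{-\alpha}}(-\lceil G\rceil) \otimes_{\Dmod_Y}\Dmod_{Y\to X})$ is supported in degrees $-k, \ldots, 0$, with degree $i$ term
$$\shO_Y(-\lceil G\rceil) \otimes_{\shO_Y} \Omega_Y^{i+n}(\log E) \otimes_{\shO_Y} f^*F_{k+i}\Dmod_X.$$
The stupid-truncation spectral sequence, combined with the projection formula (applicable because $F_{k+i}\Dmod_X$ is locally free of finite rank over $\shO_X$), reads
$$E_1^{i,q} = R^q f_*\bigl(\shO_Y(-\lceil G\rceil) \otimes_{\shO_Y} \Omega_Y^{i+n}(\log E)\bigr) \otimes_{\shO_X} F_{k+i}\Dmod_X,$$
with $d_r : E_r^{i,q} \to E_r^{i+r, q-r+1}$, converging to $R^{i+q}f_*F_{k-n}(C^\bullet_{g^{-\alpha}}(-\lceil G\rceil)\otimes_{\Dmod_Y}\Dmod_{Y\to X})$. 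By Theorem \ref{formula_log_resolution}(i), this abutment vanishes whenever $i + q \ne 0$.

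I would now induct downward on $p$. For the base case $p = n$, take $k = 0$: the complex collapses to the single sheaf $\shO_Y(-\lceil G\rceil) \otimes \omega_Y(E)$ in degree $0$, and the abutment immediately forces $R^q f_*(\shO_Y(-\lceil G\rceil) \otimes \Omega_Y^n(\log E)) = 0$ for $q > 0$. For the inductive step, assuming vanishing for all $p' > p_0$, take $k = n - p_0$ so that $\Omega_Y^{p_0}$ sits precisely at the leftmost position $i = -k$. The inductive hypothesis kills $E_1^{i,q}$ for $i > -k$ and $q > -i$. For $q > k$, I would check that each differential $d_r$ ($r \ge 1$) out of $E_r^{-k, q}$ has vanishing target: if $r > k$ the target position lies outside the complex, while for $1 \le r \le k$ the target $E_r^{-k+r, q-r+1}$ is a subquotient of $R^{q-r+1}f_*(\shO_Y(-\lceil G\rceil) \otimes \Omega_Y^{p_0+r}(\log E)) \otimes F_r\Dmod_X$, killed by the inductive hypothesis since $(q - r + 1) > k - r$ is equivalent to $q > k - 1$. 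Incoming differentials vanish automatically because the complex is supported in $i \ge -k$. Thus $E_\infty^{-k, q} = E_1^{-k, q}$ for $q > k$; as $E_\infty^{-k, q}$ is a subquotient of an abutment in total degree $-k + q > 0$ which is zero, we conclude $R^q f_*(\shO_Y(-\lceil G\rceil) \otimes \Omega_Y^{p_0}(\log E)) = 0$ for $q > n - p_0$.

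The main obstacle to be careful about is the bookkeeping on differentials: verifying that the threshold $q > k$ is exactly what forces every $d_r$ with $r \ge 1$ to either fall outside the complex or land in a region already cleared by the inductive hypothesis. The clean numerical coincidence $(q-r+1) > k-r \iff q > k - 1$ is what makes the induction close uniformly in $r$, and is really the crux of the argument.
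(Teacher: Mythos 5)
Your proof is correct and follows essentially the same route as the paper: apply Theorem~\ref{formula_log_resolution}(i) for the appropriate filtration level (you choose $k = n - p_0$ so that $\Omega_Y^{p_0}$ sits at the end of $F_{k-n}(C^\bullet_{g^{-\alpha}}(-\lceil G\rceil)\otimes_{\Dmod_Y}\Dmod_{Y\to X})$), run the hypercohomology spectral sequence with the projection formula, and close a descending induction on $p$ by observing that all incoming differentials vanish for degree reasons and all outgoing targets are killed by the inductive hypothesis. The only cosmetic difference is that the paper shifts the complex by $[p-n]$ to place it in degrees $0,\ldots,n-p$ before taking the spectral sequence, while you work with the unshifted complex in degrees $-k,\ldots,0$; the bookkeeping is otherwise identical.
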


\begin{proof}
We argue by descending induction on $p$, the case $p>n$ being trivial. Suppose now that $p\leq n$ and $q>n-p$.
After possibly replacing $X$ by suitable open subsets, we may assume that $D=\alpha\cdot {\rm div}(h)$. 
We may thus apply Theorem~\ref{formula_log_resolution} to deduce that if
$$C^{\bullet}=F_{-n}\left(C^{\bullet}_{g^{-\alpha}}(-\lceil f^*D\rceil)\otimes_{\Dmod_Y}\Dmod_{Y\to X}\right)[p-n],$$
then
\begin{equation}\label{eq1_cor_vanishing}
R^jf_*C^{\bullet}=0\quad\text{for}\quad j>n-p.
\end{equation}
Note that by definition, we have
$$C^i=\shO_Y(-\lceil f^*D\rceil)\otimes_{\shO_Y}\Omega_Y^{p+i}(\log E)\otimes_{\shO_Y}f^*F_i\Dmod_X\quad\text{for}\quad 0\leq i\leq n-p.$$
Consider the spectral sequence
$$E_1^{i,j}=R^jf_*C^i\Rightarrow R^{i+j}f_*C^{\bullet}.$$
It follows from (\ref{eq1_cor_vanishing}) that $E_{\infty}^{0,q}=0$.
Now by the projection formula we have
\begin{equation}\label{eq2_cor_vanishing}
E_1^{i,j}=R^jf_*\big(\shO_Y(-\lceil f^*D\rceil\otimes_{\shO_Y}\Omega_Y^{p+i}(\log E)\big)\otimes_{\shO_X}F_i\Dmod_X.
\end{equation}
In particular, it follows from the inductive hypothesis that for every $r\geq 1$ we have
$E_1^{r,q-r+1}=0$, hence $E_r^{r,q-r+1}=0$ as well.
On the other hand, we clearly have $E_r^{-r,q+r-1}=0$, since this is a first-quadrant spectral sequence.
We thus conclude that
$$E_r^{0,q}=E_{r+1}^{0,q}\quad\text{for all}\quad r\geq 1,$$
hence $E_1^{0,q}=E_{\infty}^{0,q}=0$. 
Using (\ref{eq2_cor_vanishing}) again, we conclude that
$$R^q f_* \big(\shO_Y(-\lceil f^*D\rceil)\otimes_{\shO_Y}\Omega_Y^p(\log E)\big)=0.$$
\end{proof}

\subsection{The ideal $I_0(D)$ and log canonical pairs}\label{scn:I_0}
We now use Theorem~\ref{formula_log_resolution} in order to relate $I_0(D)$ to multiplier ideals. Recall that for a $\QQ$-divisor $B$, one denotes by $\I (B)$ the associated multiplier ideal; see \cite[Ch.9]{Lazarsfeld} for the definition and basic properties.

\begin{proposition}\label{formula_I_0}
If $f\colon Y\to X$ is a log resolution of $(X,D)$ that is an isomorphism over $X\smallsetminus D$, and  
$E=(f^*D)_{\rm red}$, then
$$
I_0(D) \simeq f_*\shO_Y\big(K_{Y/X}+E-\lceil f^*D\rceil\big)=\I\big((1-\epsilon)D\big)
$$
for $0<\epsilon\ll 1$. 
\end{proposition}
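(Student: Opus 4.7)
The plan is to apply \theoremref{formula_log_resolution} with $k=0$ and chase the identifications through the projection formula. Since the statement is local on $X$, I may assume $D=\alpha\cdot {\rm div}(h)$ with $\alpha\in\QQ_{>0}$. The first step is to inspect the filtered complex $F_{k-n}C^{\bullet}_{g^{-\alpha}}(-\lceil G\rceil)$ at level $F_{-n}$ (i.e., $k=0$): only the rightmost term survives, since $F_i\Dmod_Y = 0 = F_i\Dmod_{Y\to X}$ for $i<0$. After tensoring with $\Dmod_{Y\to X}$, this term equals
$$\shO_Y(-\lceil f^*D\rceil)\otimes_{\shO_Y}\omega_Y(E)\otimes_{\shO_Y}F_0\Dmod_{Y\to X} = \omega_Y(E-\lceil f^*D\rceil),$$
concentrated in degree $0$. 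Hence $R^0f_*F_{-n}(\cdots) = f_*\omega_Y(E-\lceil f^*D\rceil)$.

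By \theoremref{formula_log_resolution}(iii) applied with $k=0$, this $\shO_X$-module is isomorphic to
$$h^{-\alpha}\omega_X(Z-\lceil D\rceil)\otimes_{\shO_X}I_0^\prime (D).$$
Writing $\omega_Y = f^*\omega_X\otimes_{\shO_Y}\shO_Y(K_{Y/X})$ and applying the projection formula on the left-hand side, together with the $\shO_X$-module identification $h^{-\alpha}\omega_X(Z-\lceil D\rceil) \cong \omega_X(Z-\lceil D\rceil)$ given by multiplication by the generator $h^{-\alpha}$, I obtain after cancelling $\omega_X$ that
$$f_*\shO_Y(K_{Y/X}+E-\lceil f^*D\rceil) = \shO_X(Z-\lceil D\rceil)\otimes_{\shO_X}I_0^\prime (D) = I_0(D),$$
where the last equality is Remark \ref{old_ideal} (equivalently \lemmaref{periodicity}). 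This yields the first isomorphism of the proposition.

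For the comparison with the multiplier ideal, recall $\I(B)=f_*\shO_Y(K_{Y/X}-\lfloor f^*B\rfloor)$. A coefficient-wise check shows that for every effective $\QQ$-divisor $G$ on $Y$ and every sufficiently small $\epsilon>0$,
$$\lfloor (1-\epsilon)G\rfloor = \lceil G\rceil - G_{\rm red}.$$
Indeed, if a coefficient $a$ of $G$ is an integer then $(1-\epsilon)a \in (a-1,a)$ for $\epsilon<1/a$, so its floor is $a-1 = \lceil a\rceil - 1$; if $a$ is non-integral, then for $\epsilon$ small enough $(1-\epsilon)a$ remains in the open interval $(\lfloor a\rfloor, a)$, so its floor is $\lfloor a\rfloor = \lceil a\rceil - 1$. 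Applying this to $G=f^*D$ (whose reduced support is $E$) gives $\lfloor (1-\epsilon)f^*D\rfloor = \lceil f^*D\rceil - E$, hence $\I((1-\epsilon)D)=f_*\shO_Y(K_{Y/X}+E-\lceil f^*D\rceil)$, completing the proof.

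No serious obstacle arises: once the heavy lifting of \theoremref{formula_log_resolution} is in hand, the proof is essentially a bookkeeping exercise. The only subtle point is keeping the $\shO_X$-module identifications straight, in particular the trivialization of $h^{-\alpha}\omega_X(Z-\lceil D\rceil)$ via its natural generator $h^{-\alpha}$; once this is handled, both the first isomorphism and the comparison with $\I\bigl((1-\epsilon)D\bigr)$ are immediate.
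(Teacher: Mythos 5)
Your proof is correct and follows essentially the same route as the paper's: apply Theorem~\ref{formula_log_resolution} at $k=0$, observe that $F_{-n}C^{\bullet}_{g^{-\alpha}}(-\lceil f^*D\rceil)\otimes_{\Dmod_Y}\Dmod_{Y\to X}$ collapses to $\omega_Y(E-\lceil f^*D\rceil)$ in degree $0$, and then identify $E-\lceil f^*D\rceil=-\lfloor(1-\epsilon)f^*D\rfloor$ for $0<\epsilon\ll 1$. You just spell out a few bookkeeping steps (the projection formula, cancellation of $\omega_X$, and the coefficientwise check of the rounding identity) that the paper leaves implicit.
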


\begin{proof}
The first equality follows from Theorem~\ref{formula_log_resolution}, together with the fact that the term
$F_{-n}C^{\bullet}_{g^{-\alpha}}(-\lceil f^*D\rceil)$ consists of 
$$\omega_Y(E-\lceil f^*D\rceil)$$
placed in degree $0$. 
The second equality then follows from the definition of multiplier ideals and the fact that if $A$ is an effective divisor with support $E$, then
$$E-\lceil A\rceil=-\lfloor (1-\epsilon) A\rfloor\quad\text{for}\quad 0<\epsilon\ll 1.$$
\end{proof}

As in \cite{MP1} in the case of reduced divisors, we obtain therefore that for every $\QQ$-divisor $D$ we have that 
$I_0 (D) = \shO_X$ if and only if the pair $(X, D)$ is log canonical, which leads to the following:

\begin{definition}\label{k-log-can}
The pair $(X,D)$ is \emph{$k$-log canonical} if 
$$I_0 (D) = \cdots = I_k (D) = \shO_X.\footnote{We note that by the results in \cite[\S5]{MP3}, at least in the case of divisors of the form 
$D = \alpha Z$, with $\alpha \in \QQ_{>0}$, this condition is equivalent simply to $I_k (D) = \shO_X$ (cf. Remark~\ref{rmk_inclusion_ideals}).}$$
\end{definition}

Note however that by Remark \ref{old_ideal}, the triviality of any $I_k (D)$ is possible only if $\lceil D \rceil = Z$; in 
general it is more suitable to focus on the triviality of the ideals $I_k' (D)$. We therefore introduce also:

\begin{definition}\label{reduced-k-log-can}
The pair $(X,D)$ is \emph{reduced $k$-log canonical} if 
$$I_0' (D) = \cdots = I_k' (D) = \shO_X,$$
or equivalently 
$$I_0 (D) = \cdots = I_k (D) = \shO_X (Z - \lceil D \rceil).$$
\end{definition}

\begin{example}
Let $Z$ have an ordinary singularity, i.e. an isolated singular point whose projectivized tangent cone is smooth, of multiplicity $m$. If $D = \alpha Z$ with $0 < \alpha \le 1$, then 
$$(X, D) {\rm ~ is~} k {\rm-log ~canonical ~} \iff k \le [\frac{n}{m} - \alpha].$$ 
See Corollary \ref{triviality_ordinary_case} and Remark \ref{ordinary_vfil}.
\end{example}

\section{Local study and global vanishing theorem}

\subsection{Generation level of the Hodge filtration, and examples}\label{scn:level}
As above, we consider a divisor $D=\alpha H$, with $H = {\rm div}(h)$ for some nonzero $h\in\shO_X(X)$ and $\alpha\in\QQ_{>0}$. 
We denote by $Z$ the support of $D$, and $\beta = 1 - \alpha$. 
By construction, the filtration on $\Mmod(h^{\beta})$ is compatible with the order filtration on $\Dmod_X$. This means that for every $k, \ell \geq 0$ we have
\begin{equation}\label{eq1_filtration}
F_\ell \Dmod_X\cdot\big(I_k(D)\otimes \shO_X(kZ + H)h^{\beta}\big)\subseteq I_{k+\ell}(D)\otimes \shO_X ((k+\ell)Z+H)h^{\beta},
\end{equation}
or equivalently for every $k \geq 0$ we have
\begin{equation}\label{eq2_filtration}
F_1 \Dmod_X\cdot\big(I_k(D)\otimes \shO_X(kZ + H)h^{\beta}\big)\subseteq I_{k+1}(D)\otimes \shO_X ((k+1)Z+H)h^{\beta}.
\end{equation}
By working locally, we may assume that we also have an equation $g$ for $Z$.
With this notation, condition (\ref{eq2_filtration}) is equivalent to
the following two conditions: 
\begin{equation}\label{eq0_semicontinuity_question}
g\cdot I_k(D)\subseteq I_{k+1}(D)
\end{equation}
and for every derivation $Q$ of $\shO_X$ and every $w\in I_k(D)$, we have
\begin{equation}\label{eq2_semicontinuity_question}
g\cdot Q(w)-kw\cdot Q(g) - \alpha gw\cdot \frac{Q(h)}{h}\in I_{k+1}(D).
\end{equation}

We now turn to the problem of describing the generation level of the filtration on $\Mmod(h^{\beta})$. Recall that one says that the filtration is generated at level $k$ if
$$F_{\ell}\Dmod_X\cdot F_k\Mmod(h^{\beta})=F_{k+\ell}\Mmod(h^{\beta})\quad\text{for all}\quad \ell\geq 0,$$
or in other words if equality is satisfied in ($\ref{eq1_filtration}$).
This is of course equivalent to having 
$$F_1\Dmod_X\cdot F_p\Mmod(h^{\beta})=F_{p+1}\Mmod(h^{\beta})\quad\text{for all}\quad p\geq k.$$




Suppose now that we are in the setting of Theorem~\ref{formula_log_resolution}. 

\begin{theorem}\label{criterion_level}
The filtration on $\Mmod(h^{\beta})$ is generated at level $k$ if and only if
$$R^qf_*\big(\Omega_Y^{n-q}(\log E)\otimes_{\shO_Y}\shO_Y(-\lceil f^*D\rceil)\big)=0\quad\text{for}\quad q>k.$$
In particular, the filtration is always generated at level $n-1$.
\end{theorem}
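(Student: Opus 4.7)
The plan is to translate the question into one about associated graded objects and apply a spectral-sequence argument, with Corollary~\ref{cor_vanishing} as the crucial input. Passing to right $\Dmod$-modules via the standard equivalence, the Hodge filtration on $\Mmod(h^\beta)$ is generated at level $k$ iff the multiplication
\[
\mu_{m-1}\colon \shT_X \otimes_{\shO_X} \gr^F_{m-1-n} \Mmod_r(h^{-\alpha}) \longrightarrow \gr^F_{m-n} \Mmod_r(h^{-\alpha})
\]
is surjective for every $m \ge k+1$. Setting $K^\bullet := C^\bullet_{g^{-\alpha}}(-\lceil G\rceil) \otimes_{\Dmod_Y} \Dmod_{Y\to X}$ with its natural filtration, Theorem~\ref{formula_log_resolution} gives $R^p f_* F_k K^\bullet = 0$ for $p \ne 0$; splicing the filtration short exact sequences through $R f_*$ then yields $R^p f_* \gr^F_k K^\bullet = 0$ for $p \ne 0$, together with $R^0 f_* \gr^F_{p-n} K^\bullet \simeq \gr^F_{p-n}\Mmod_r(h^{-\alpha})$.

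Write $L^\bullet_p := \gr^F_{p-n} K^\bullet$, whose terms are $L^{-q}_p = \shO_Y(-\lceil G\rceil)\otimes \Omega^{n-q}_Y(\log E)\otimes f^* \Sym^{p-q}\shT_X$ with a Koszul-type differential, and note that multiplication on the $\Sym^\bullet\shT_X$-factor defines a morphism of complexes $\psi_m\colon f^*\shT_X\otimes L^\bullet_{m-1}\to L^\bullet_m$ that, after applying $R^0 f_*$, recovers $\mu_{m-1}$ via the projection formula. Consider the hypercohomology spectral sequence
\[
E_1^{-q, j}(p) = R^j f_*\bigl(\shO_Y(-\lceil G\rceil)\otimes \Omega^{n-q}_Y(\log E)\bigr) \otimes_{\shO_X} \Sym^{p-q}\shT_X \Rightarrow R^{j-q} f_* L^\bullet_p,
\]
which converges to $\gr^F_{p-n}\Mmod_r(h^{-\alpha})$ in total degree $0$ and to zero elsewhere. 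Corollary~\ref{cor_vanishing} gives $E_1^{-q, j}(p) = 0$ for $j > q$, so all differentials \emph{out} of $E_r^{-m, m}(m)$ vanish; the differentials \emph{into} it originate from $E_r^{-m-r, m+r-1}(m)$, whose $\Sym^{-r}\shT_X$-factor vanishes. Hence $E_\infty^{-m, m}(m) = E_1^{-m, m}(m) = R^m f_*\bigl(\shO_Y(-\lceil G\rceil)\otimes \Omega^{n-m}_Y(\log E)\bigr)$.

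The morphism $\psi_m$ induces a map of spectral sequences which at $E_1$ acts as the identity on the $R^q f_*(\ldots)$-factor tensored with the symmetric-algebra multiplication $\Sym^{m-1-q}\shT_X\otimes \shT_X\to \Sym^{m-q}\shT_X$, surjective for $m - q \ge 1$ and the zero map $0 \to R^m f_*(\ldots)$ for $q = m$. A standard induction on pages, using that the differentials out of $E_r^{-q, q}$ vanish on both sides, shows that surjectivity persists on $E_\infty^{-q, q}$ for $q < m$. Consequently $\coker(\mu_{m-1})$, as a filtered object, has a single nonzero graded piece, yielding
\[
\coker(\mu_{m-1}) \simeq R^m f_*\bigl(\shO_Y(-\lceil G\rceil)\otimes \Omega^{n-m}_Y(\log E)\bigr),
\]
and the equivalence in the theorem follows. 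The \emph{in particular} clause is then immediate: for $q = n$ the sheaf in question is $R^n f_*\shO_Y(-\lceil G\rceil)$, which vanishes because $f$ is birational and hence has fibers of dimension at most $n-1$. The main obstacle is carefully ensuring that surjectivity at $E_1$ of the spectral-sequence morphism passes to $E_\infty$ and that $E_\infty^{-m,m}(m)$ identifies precisely with the asserted $R^m f_*$-term; both points hinge on the bi-directional vanishing supplied by Corollary~\ref{cor_vanishing}.
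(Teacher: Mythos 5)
Your proof is correct, and it is a genuinely different (graded-level) execution of the same basic strategy as the paper's. The paper works directly with the filtered complexes $C^{\bullet}_{p}:=F_p\bigl(C^{\bullet}_{g^{-\alpha}}(-\lceil G\rceil)\otimes_{\Dmod_Y}\Dmod_{Y\to X}\bigr)$, forms the complex morphism $\Phi_k$ given by right multiplication by $F_1\Dmod_X$, takes its kernel $T^{\bullet}$, and then chases through short exact sequences of complexes (introducing the auxiliary subcomplex $B^{\bullet}\subset C^{\bullet}_{k+1-n}$ with the $(-k-1)$-term deleted) to identify the obstruction to surjectivity of the pushforward with $R^{k+1}f_*C^{-k-1}_{k+1-n}$; this requires a case split between $k\geq n$ and $0\leq k<n$. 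You instead pass immediately to the associated graded complexes $L^{\bullet}_p=\gr^F_{p-n}K^{\bullet}$, observe that the generation condition becomes surjectivity of $\mu_{m-1}\colon\shT_X\otimes\gr^F_{m-1-n}\Mmod_r\to\gr^F_{m-n}\Mmod_r$, and compute $\coker(\mu_{m-1})$ by comparing the two column-filtration hypercohomology spectral sequences via the map $\psi_m$. Both arguments hinge on exactly the same input --- Theorem~\ref{formula_log_resolution} to identify the abutment and Corollary~\ref{cor_vanishing} to kill the $R^jf_*$ with $j>q$ --- but your organization avoids the case split, makes the role of the symmetric-algebra multiplication transparent at the $E_1$-page, and isolates the cokernel as a single graded piece $E_\infty^{-m,m}(m)$. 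The two subtle points you flag (persistence of surjectivity to $E_\infty$, and identification of $E_\infty^{-m,m}(m)$ with $E_1^{-m,m}(m)$) are handled correctly: the outgoing differentials die by Corollary~\ref{cor_vanishing} on both source and target spectral sequences, and the incoming differentials to position $(-m,m)$ on the target have $\Sym^{-r}\shT_X=0$; consequently $\mu_{m-1}$ maps $A=W_{m-1}A$ onto $W_{m-1}B$ and the cokernel is the top graded piece. One could argue the paper's filtered version is marginally more self-contained since it does not need to check that $\psi_m$ is a morphism of complexes, but the check is routine (the $\gr^F$ differential is $\Sym^{\bullet}\shT_X$-linear). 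Net, your route is a clean alternative and arguably slightly more systematic.
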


\begin{proof}
The proof follows almost verbatim that of \cite[Theorem~17.1]{MP1}. It is more convenient to work equivalently with $\Mmod (h^{-\alpha})$, and 
in fact with the associated right $\Dmod_X$-module $\Mmod_r(h^{-\alpha})$. 
It is enough to show that 
\begin{equation}\label{eq1_criterion_level}
F_{k-n} \Mmod_r(h^{-\alpha})\cdot F_1\Dmod_X=F_{k-n+1}\Mmod_r(h^{-\alpha})
\end{equation}
if and only if 
$$R^{k+1}\big(f_*\Omega_Y^{n-k-1}(\log E)\otimes_{\shO_Y} \shO_Y(-\lceil f^*D\rceil)\big)=0.$$
The inclusion ``$\subseteq$" in (\ref{eq1_criterion_level}) always holds of course by the definition of a filtration,
hence the issue is the reverse inclusion.

With the notation in \S\ref{complex_for_SNC}, for every $p$ let
$$C^{\bullet}_{p}:=F_p\big(C^{\bullet}_{g^{-\alpha}}(-\lceil f^*D\rceil)\otimes_{\Dmod_Y}\Dmod_{Y\to X}\big),$$
where $g=h\circ f$. Consider the 
morphism of complexes
$$\Phi_k\colon C^{\bullet}_{k-n}\otimes_{f^{-1}\shO_X}{f^{-1}F_1\Dmod_X}\longrightarrow C^{\bullet}_{k+1-n}$$
induced by right multiplication, and let $T^{\bullet}={\rm Ker}(\Phi_k)$.
Using Theorem~\ref{formula_log_resolution}, we see that (\ref{eq1_criterion_level}) holds if and only if the morphism
\begin{equation}\label{eq0_generation_filtration}
R^0f_*C^{\bullet}_{k-n}\otimes_{\shO_X}F_1\Dmod_X\longrightarrow R^0f_*C^{\bullet}_{k+1-n}
\end{equation}
induced by $\Phi_k$ is surjective.

For every $m\geq 0$, let $R_m$ be the kernel of the morphism
induced by right multiplication
$$F_m\Dmod_X\otimes_{\shO_X}F_1\Dmod_X\longrightarrow F_{m+1}\Dmod_X.$$
Note that this is a surjective morphism of locally free $\shO_X$-modules, hence $R_m$ is a locally free $\shO_X$-module
and for every $p$ we have
$$T^p=\shO_Y(-\lceil f^*D\rceil)\otimes_{\shO_Y}\Omega_Y^{n+p}(\log E)\otimes_{f^{-1}\shO_X}f^{-1}R_{k+p}.$$

Consider the first-quadrant hypercohomology spectral sequence
$$E_1^{p,q}=R^qf_*T^{p-n}\implies R^{p+q-n}f_*T^{\bullet}.$$
The projection formula gives
$$R^qf_*T^{p-n}\simeq R^qf_*\big(\shO_Y(-\lceil f^*D\rceil)\otimes_{\shO_Y} \Omega_Y^p(\log E)\big)\otimes_{\shO_X}R_{k+p-n},$$
and this vanishes for $p+q>n$ by Corollary~\ref{cor_vanishing}. We thus deduce from the spectral sequence
that $R^jf_*T^{\bullet}=0$ for all $j>0$.

We first consider the case when $k\geq n$ and show that (\ref{eq1_criterion_level}) always holds.
Indeed, in this case $\Phi_k$ is surjective.
It follows from the projection formula and the long exact sequence in cohomology that we have an exact sequence
$$R^0f_*C^{\bullet}_{k-n}\otimes_{\shO_X}F_1\Dmod_X\to R^0f_*C^{\bullet}_{k+1-n}\to R^1f_*T^{\bullet}.$$
We have seen that $R^1f_*T^{\bullet}=0$, hence  the morphism in (\ref{eq0_generation_filtration}) is surjective.

Suppose now that $0\leq k<n$. Let $B^{\bullet}\hookrightarrow C^{\bullet}_{k+1-n}$ be the subcomplex given by
$B^p=C_{k+1-n}^p$ for all $p\neq -k-1$ and $B^{-k-1}=0$. Note that we have a short exact sequence of complexes
\begin{equation}\label{eq2_criterion_level}
0\longrightarrow B^{\bullet}\longrightarrow C^{\bullet}_{k+1-n}\longrightarrow C_{k+1-n}^{-k-1}[k+1]\longrightarrow 0.
\end{equation}
It is clear that $\Phi_k$ factors as
$$C^{\bullet}_{k-n}\otimes_{f^{-1}\shO_X}{f^{-1}F_1\Dmod_X}\overset{\Phi'_k}\longrightarrow B^{\bullet}\hookrightarrow C^{\bullet}_{k+1-n}.$$
Moreover, $\Phi'_k$ is surjective and ${\rm Ker}(\Phi'_k)=T^{\bullet}$. As before, since $R^1f_*T^{\bullet}=0$, we conclude that morphism induced
by $\Phi'_k$:
$$R^0f_*C^{\bullet}_{k-n}\otimes_{\shO_X}F_1\Dmod_X\to R^0f_*B^{\bullet}$$ is surjective. This implies that (\ref{eq0_generation_filtration}) is surjective
if and only if the morphism 
\begin{equation}\label{eq3_generation_filtration}
R^0f_*B^{\bullet}\to R^0f_*C^{\bullet}_{k+1-n}
\end{equation}
is surjective. The exact sequence (\ref{eq2_criterion_level}) induces an exact sequence
$$R^0f_*B^{\bullet}\to R^0f_*C^{\bullet}_{k+1-n}\to R^{k+1}f_*C_{k+1-n}^{-k-1}\to R^1f_*B^{\bullet}.$$
We have seen that $R^2f_*T^{\bullet}=0$, and we also have 
$$R^1f_*\big(C^{\bullet}_{k-n}\otimes_{f^{-1}\shO_X}{f^{-1}F_1\Dmod_X}\big)=0.$$
This follows as above, using the projection formula, the hypercohomology spectral sequence, and Corollary~\ref{cor_vanishing}.
We deduce from the long exact sequence associated to 
$$0\longrightarrow T^{\bullet}\longrightarrow C^{\bullet}_{k-n}\otimes_{f^{-1}\shO_X}{f^{-1}F_1\Dmod_X}\longrightarrow B^{\bullet}\longrightarrow 0$$
that $R^1f_*B^{\bullet}=0$. Putting all of this together, we conclude that (\ref{eq0_generation_filtration})
is surjective if and only if $R^{k+1}f_*C_{k+1-n}^{-k-1}=0$.
Since  by definition we have
$$R^{k+1}f_*C_{k+1-n}^{-k-1}=R^{k+1}f_*\big(\shO_Y(-\lceil f^*D\rceil)\otimes_{\shO_Y}\Omega_Y^{n-k-1}(\log E)\big),$$
this completes the proof of the first assertion in the proposition. The second assertion follows from the first, since all fibers of $f$ have dimension $<n$.
\end{proof}

\begin{example}[{\bf Nodal curves}]\label{example_node}
If $X$ is a smooth surface and $Z$ is a reduced curve on $X$, defined by $h\in\shO(X)$, such that $Z$ has a node at $x\in X$ and no other singularities, then
the filtration on $\Mmod(h^{\beta})$ is generated at level $0$. Indeed, let $f\colon Y\to X$ be the blow-up
of $X$ at $x$, with exceptional divisor $F$. This is a log resolution of $(X,Z)$, hence our assertion follows if we show that
\begin{equation}\label{eq_example_node}
R^1f_*\big(\Omega_Y(\log E)\otimes_{\shO_Y}\shO_Y(-\lceil \alpha f^*Z\rceil)\big)=0,
\end{equation}
where $E=\widetilde{Z}+F$. Note that $f^*Z=\widetilde{Z}+2F$ and
we may assume that $0<\alpha\leq 1$. If $\frac{1}{2}<\alpha\leq 1$, then $\lceil \alpha f^*Z\rceil =f^*Z$ and (\ref{eq_example_node}) 
follows from \cite[Theorem~B]{MP1} using the projection formula. On the other hand, if $0<\alpha\leq \frac{1}{2}$, then 
$\lceil \alpha f^*Z\rceil=E$ and the vanishing follows from the fact that the pair $(X,Z)$ is log canonical, using \cite[Theorem~14.1]{GKKP} (though, in this case, one could also check this directly).

Once we know that the filtration on $\Mmod(h^{\beta})$ is generated at level $0$, it is straightforward to check that 
$$I_k(\alpha Z)={\mathfrak m}_x^k, \,\,\,\,\,{\rm for~all}\,\,\,\,\,\,0<\alpha\leq 1 {\rm ~and~} k \ge 0,$$
where ${\mathfrak m}_x$ is the ideal defining $x$ in $X$.
\end{example}

Unlike in the case when $D$ is a reduced integral divisor, when the filtration $F_{\bullet} \shO_X(*D)$ is generated at level $n-2$ by \cite[Theorem~B]{MP1}, 
in general it is not possible to improve the bound given by Proposition~\ref{criterion_level}.

\begin{example}[{\bf Optimal generation level}]
It can happen that on a surface $X$ the filtration on $\Mmod(h^{\beta })$ is not generated at level $0$.
 Suppose, for example, that $X=\AAA^2$ and $Z=L_1+L_2+L_3$, where $L_1$, $L_2$, and $L_3$ are 
3 lines passing through the origin. 
 If $f\colon Y\to X$ is the blow-up of the origin
 and $E=(f^*Z)_{\rm red}$, then we write $E=F+G_1+G_2+G_3$, where $F$ is the exceptional divisor and the $G_i$ are the strict transforms of the $L_i$.
 Let $D=\alpha Z$ with $0<\alpha\ll 1$, so that $\lceil f^*D\rceil=E$.
 If 
 $$H^1\big(Y,\Omega_Y(\log E)\otimes_{\shO_Y}\shO_Y(-\lceil f^*D\rceil)\big)=H^1\big(Y,\Omega_Y(\log E)\otimes_{\shO_Y}\shO_Y(-E)\big)$$
 were zero, then it would follow from the standard exact sequence
 $$0\to\Omega_Y(\log E)\otimes_{\shO_Y}\shO_Y(-E)\to\Omega_Y\to\Omega_F\oplus\bigoplus_{i=1}^3\Omega_{G_i}\to 0$$
 that the map 
 $$H^0(Y,\Omega_Y)\to H^0(F,\Omega_F)\oplus\bigoplus_{i=1}^3H^0(G_i,\Omega_{G_i})$$
 is surjective. In particular, we would deduce that the map
 $$H^0(X,\Omega_X)\to\bigoplus_{i=1}^3H^0(L_i,\Omega_{L_i})$$
 is surjective. It is an easy exercise to see that this is not the case. Note that the non-vanishing of $H^1\big(Y,\Omega_Y(\log E)\otimes_{\shO_Y}\shO_Y(-E)\big)$ is not inconsistent with the Steenbrink-type vanishing in \cite[Theorem~14.1]{GKKP}, since the pair $(X, Z)$ is not log-canonical.
 \end{example}

\begin{example}[{\bf Quasi-homogeneous isolated singularities}]\label{quasihomogeneous}
For the class of quasi-homogeneous isolated singularities (such as those in the examples above), 
the generation level for the filtration on $\Mmod(h^{\beta})$ can be detected by the Bernstein-Sato polynomial. Before formulating this more precisely, we recall
some definitions. Suppose that $Z$ is a hypersurface in $X$ defined by $h\in\shO_X(X)$. The \emph{Bernstein-Sato polynomial}  of $Z$ is 
the non-zero monic polynomial $b_{h}\in\CC[s]$ of smallest
degree such that we locally
have a relation of the form
$$b_{h}(s)h^s=P(s)\bullet h^{s+1}$$
for some nonzero $P\in\Dmod_X[s]$. If $Z$ is non-empty, it is known that $(s+1)$ divides $b_{h}$; moreover, all the roots of $b_{h}$ are negative rational numbers. In this case, one defines $\widetilde{\alpha}_{h}=-\lambda$, where $\lambda$ is the largest root of the \emph{reduced Bernstein-Sato polynomial} $\widetilde{b}_{h}=b_{h}(s)/(s+1)$. Note that $\widetilde{b}_{h}$ has degree $0$ if and only if $Z$ is smooth, and in this case one makes the convention that $\widetilde{\alpha}_{h}=\infty$.

The statement is that if $Z = {\rm div}(h)$ is reduced and has a unique singular point at $x$, which is a quasi-homogeneous singularity, 
and $D = \alpha Z$, then the generation level $k_0$ of the filtration on $\Mmod(h^{\beta})$ (i.e. the smallest $k$ such that the filtration is generated at level $k$)  is  
$$k_0 =  \lfloor n-\widetilde{\alpha}_{h}-\alpha\rfloor.$$ 
This was proved by Saito \cite[Theorem~0.7]{Saito-HF} when $D$ is reduced, i.e. for  $\alpha=1$, and was extended to the general case by Zhang \cite{Zhang}.\footnote{Moreover, based on calculations of Saito,  
Zhang shows in \emph{loc. cit.} that all Hodge ideals of $\QQ$-divisors associated to such singularities can be computed explicitly.}

Note that for such singularities there is an explicit formula for $\widetilde{\alpha}_{h}$; see e.g. \cite[\S4.1]{Saito-HF}. Just as an illustration, for $h = xy (x+y)$, which describes the previous example, we have $\widetilde{\alpha}_{h} = 2/3$, and so for 
$\alpha$ small (more precisely $0 < \alpha \le 1/3$)  we recover the fact that the generation level is equal to $1$.
\end{example}

\begin{example}[{\bf Incomensurability of higher Hodge ideals}]\label{parameter_dependence}
Suppose that $X$ is a smooth surface and $Z=\sum_{i=1}^rD_i$ is a reduced effective divisor on $X$.
Let $f\colon Y\to X$ be a log resolution of $(X,Z)$ that is an isomorphism over $X\smallsetminus Z$, and put $E=(f^*Z)_{\rm red}$.
Let  $D=\sum_{i=1}^r(1-a_i)D_i$ be a divisor with $0\leq a_i\ll 1$ for all $i$, so that $\lceil f^*D\rceil=f^*Z$. 
In this case we have
$$R^1f_*\big(\shO_Y(-\lceil f^*D\rceil)\otimes \Omega_Y^1(\log E)\big)=0$$
by the projection formula and \cite[Theorem~B]{MP1}, and so the filtration is generated at level $0$.
It follows from the discussion at the beginning of the section (see ($\ref{eq0_semicontinuity_question}$) and 
($\ref{eq2_semicontinuity_question}$))
that if $g$ is a local equation of $Z$,  and $D=\alpha Z$, with $\alpha\leq 1$ and close to $1$, 
then
$I_{k+1}(D)$ is generated by $g\cdot I_k(D)$ and
$$\left\{h\cdot Q(w)-(\alpha+k)w\cdot Q(h)\mid w\in I_k(D),Q\in {\rm Der}_{\CC}(\shO_X)\right\}.$$

For example, if $X=\CC^2$ and $Z$ is the cusp defined by $x^2+y^3$, then for $D = \alpha Z$ with $\alpha\leq 1$ and close to $1$ we have
$$I_0(D)=(x,y), \,\,I_1(D)=(x^2,xy,y^3),\,\,\text{and}$$
$$I_2(D)=(x^3,x^2y^2,xy^3, y^4-(2\alpha+1)x^2y).$$

Note in particular that if $D_1 = \alpha_1 Z$ and $D_2 = \alpha_2 Z$, with $\alpha_1 < \alpha_2$ both close to $1$, then there is no 
inclusion between the  ideals $I_2 (D_1)$ and $I_2 (D_2)$. This is in contrast with the picture for multiplier ideals, where for any $\QQ$-divisors
$D_1 \le D_2$ one has $I_0 (D_2) \subseteq I_0 (D_1)$; see \cite[Proposition~9.2.32(i)]{Lazarsfeld}. It is not 
hard to check however that 
$$I_2 (D_1) = I_2 (D_2) \,\,\,\,\,{\rm mod} \,\,\,\,x^2 + y^3,$$
and that this is part of a general phenomenon where the picture is well behaved after modding out by a defining equation
for the hypersurface; this follows from the connection with the $V$-filtration, see \cite[Corollary B]{MP3}. 
\end{example}

\begin{remark}\label{rem_multiplicity}
If the filtration is generated at level $k$, then $I_{k+1} (D)$ is generated by the terms appearing on the left hand side of 
conditions (\ref{eq0_semicontinuity_question}) and (\ref{eq2_semicontinuity_question}).  A simple calculation shows then that in this case, 
for every $j\geq 1$ and every $x\in X$, we have
$${\rm mult}_xI_{k+j}(D)\geq {\rm mult}_xI_{k+j-1}(D)+{\rm mult}_x Z-1.$$
In particular, we have
$${\rm mult}_xI_{k+j}(D)\geq {\rm mult}_xI_k(D)+j\cdot ({\rm mult}_x Z -1).$$
Since the filtration is always generated at level $n-1$ by Proposition~\ref{criterion_level}, we obtain the following consequence.
\end{remark}

\begin{corollary}\label{cor_singular_nontrivial}
If $D$ is an effective $\QQ$-divisor on the smooth variety $X$, with support $Z$, and if $Z$ is singular at some $x\in X$,
then $I_j(D)_x\neq\shO_{X,x}$ for all $j\geq n$. In fact, if $m={\rm mult}_x Z$, then
$${\rm mult}_xI_j(D)\geq (j-n+1)(m-1)\quad\text{for all}\quad j\geq n.$$ 
\end{corollary}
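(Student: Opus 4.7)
The plan is to combine Theorem~\ref{criterion_level} with the iterated multiplicity estimate from Remark~\ref{rem_multiplicity}. Working locally around $x$, one writes $D=\alpha H$ with $H={\rm div}(h)$ for some nonzero $h\in\shO_X(X)$ and $\alpha\in\QQ_{>0}$, and sets $\beta=1-\alpha$. First I would invoke Theorem~\ref{criterion_level}: since all fibers of any log resolution $f\colon Y\to X$ of $(X,D)$ have dimension strictly less than $n$, the required vanishings $R^qf_*\bigl(\Omega_Y^{n-q}(\log E)\otimes\shO_Y(-\lceil f^*D\rceil)\bigr)=0$ hold automatically for $q>n-1$. Hence the filtration on $\Mmod(h^{\beta})$ is generated at level $k=n-1$.

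Next I would apply Remark~\ref{rem_multiplicity}, which asserts that once the filtration is generated at level $k$, one has the iterated inequality
$$
{\rm mult}_x I_{k+j}(D)\;\geq\;{\rm mult}_x I_k(D)+j\cdot(m-1)\;\geq\;j(m-1)
$$
for every $j\geq 1$, where $m={\rm mult}_x Z$. Specializing to $k=n-1$ and reindexing via $i=n-1+j$ (so that $j=i-n+1\geq 1$ is equivalent to $i\geq n$) yields
$$
{\rm mult}_x I_i(D)\;\geq\;(i-n+1)(m-1)\quad\text{for all } i\geq n,
$$
which is the quantitative statement of the corollary.

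To conclude, since $Z$ is singular at $x$ we have $m\geq 2$, so $(i-n+1)(m-1)\geq 1$ for every $i\geq n$. This forces $I_i(D)_x\subseteq\mathfrak{m}_x$, and in particular $I_i(D)_x\neq\shO_{X,x}$. The argument is essentially immediate given the preceding results; the only point one has to be comfortable with is the one-step multiplicity drop in Remark~\ref{rem_multiplicity}, which comes directly from inspecting the generators of $I_{k+1}(D)$ provided by (\ref{eq0_semicontinuity_question}) and (\ref{eq2_semicontinuity_question}) -- the factor $g$ contributes multiplicity $m$, while each derivation decreases multiplicity by at most $1$, yielding a net gain of at least $m-1$. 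Thus there is no genuine obstacle; the corollary is a clean consequence of the birational description of $I_k(D)$ together with the sharp generation-level bound $n-1$.
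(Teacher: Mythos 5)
Your proposal is correct and follows exactly the paper's own route: combine the generation-at-level-$(n-1)$ statement from Theorem~\ref{criterion_level} with the iterated multiplicity inequality of Remark~\ref{rem_multiplicity}, then observe that $m\geq 2$ forces the bound to be positive. The paper presents the corollary as an immediate consequence of these two facts, which is precisely what you have written out.
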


\subsection{Non-triviality criteria}\label{nontriviality}
The following is the analogue of \cite[Theorem~18.1]{MP1} in the setting of $\QQ$-divisors. Let $D$ be an effective 
$\QQ$-divisor on the smooth variety $X$, with $Z={\rm Supp}(D)$, 
and let $\varphi\colon X_1\to X$ be  a projective morphism with $X_1$ smooth, such that  $\varphi$ is an isomorphism 
over $X\smallsetminus Z$.  We denote 
$$Z_1=(\varphi^*Z)_{\rm red} \,\,\,\,\,\,{\rm and} \,\,\,\,\,\, T_{X_1/X}={\rm Coker}(T_{X_1}\to \varphi^*T_X).$$ 

\begin{theorem}\label{smaller_ideal}
With the above notation, the following hold:
\begin{enumerate}
\item[i)] We have an inclusion
$$\varphi_*\big(I_k(\varphi^*D)\otimes_{\shO_{X_1}}\shO_{X_1}(K_{X_1/X}+ k(Z_1-\varphi^*Z))\big)\subseteq I_k(D).$$
\item[ii)] If $J$ is a coherent ideal on $X$ such that $J\cdot T_{X_1 /X}=0$, then 
$$J^k\cdot I_k(D)\subseteq \varphi_*\big(I_k(\varphi^*D)\otimes_{\shO_{X_1}}\shO_{X_1}(K_{X_1/X}+ k (Z_1-\varphi^*Z))\big).$$
\end{enumerate}
\end{theorem}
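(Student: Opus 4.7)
The approach is to use a common log resolution and extract the inclusions in (i) and (ii) from the description of Hodge ideals via Theorem~\ref{formula_log_resolution} and Saito strictness. First I would choose a log resolution $f_1\colon Y\to X_1$ of $(X_1,\varphi^{\ast}D)$ that is an isomorphism over $X_1\smallsetminus Z_1$; then $f:=\varphi\circ f_1\colon Y\to X$ is automatically a log resolution of $(X,D)$ that is an isomorphism over $X\smallsetminus Z$. Setting $g=h\circ f$, $G=f^{\ast}D=f_1^{\ast}\varphi^{\ast}D$, $E=(f^{\ast}D)_{\mathrm{red}}$, and $A^{\bullet}:=C^{\bullet}_{g^{-\alpha}}(-\lceil G\rceil)$, Theorem~\ref{formula_log_resolution}(iii) applied to $f$ and to $f_1$ identifies
\[
R^0f_{\ast}F_{k-n}(A^{\bullet}\otimes_{\Dmod_Y}\Dmod_{Y\to X})\simeq h^{\beta}\omega_X(kZ+H)\otimes I_k(D)
\]
and
\[
R^0(f_1)_{\ast}F_{k-n}(A^{\bullet}\otimes_{\Dmod_Y}\Dmod_{Y\to X_1})\simeq (\varphi^{\ast}h)^{\beta}\omega_{X_1}(kZ_1+H_1)\otimes I_k(\varphi^{\ast}D),
\]
where $H_1=\varphi^{\ast}H$. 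Under the projection-formula identification $(\varphi^{\ast}h)^{\beta}\omega_{X_1}(kZ_1+H_1)\simeq \omega_{X_1/X}(k(Z_1-\varphi^{\ast}Z))\otimes \varphi^{\ast}(h^{\beta}\omega_X(kZ+H))$, the two inclusions in the theorem translate to
\[
\varphi_{\ast}F_{k-n}\Mmod_r(h_1^{-\alpha})\subseteq F_{k-n}\Mmod_r(h^{-\alpha})\quad\text{and}\quad J^k\cdot F_{k-n}\Mmod_r(h^{-\alpha})\subseteq \varphi_{\ast}F_{k-n}\Mmod_r(h_1^{-\alpha}),
\]
inside $\Mmod_r(h^{-\alpha})\simeq \varphi_{+}\Mmod_r(h_1^{-\alpha})$ (the derived $\Dmod$-module push-forward being concentrated in degree $0$ by Theorem~\ref{formula_log_resolution}(i,iii) applied to $f$ and $f_1$).

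For part (i) I would invoke Saito strictness for $\varphi_{+}$ applied to $\Mmod_r(h_1^{-\alpha})$: the Hodge filtration on $\Mmod_r(h^{-\alpha})$ is the image, under the natural edge map, of $R^0\varphi_{\ast}$ applied to $F_{k-n}$ of the derived tensor product $\Mmod_r(h_1^{-\alpha})\otimes^{\derL}_{\Dmod_{X_1}}\Dmod_{X_1\to X}$. The $\Dmod_{X_1}$-linear unit map $m\mapsto m\otimes 1$ carries $F_{k-n}\Mmod_r(h_1^{-\alpha})$ into the $F_{k-n}$ piece of the tensor product; applying $\varphi_{\ast}$ and composing with the edge map produces the inclusion $\varphi_{\ast}F_{k-n}\Mmod_r(h_1^{-\alpha})\hookrightarrow F_{k-n}\Mmod_r(h^{-\alpha})$, which under the identifications above is exactly~(i).

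For part (ii) the extra input is local. The natural map $F_{\ell}\Dmod_{X_1}\to F_{\ell}\Dmod_{X_1\to X}=\varphi^{\ast}F_{\ell}\Dmod_X$, induced by pulling back derivations along $\varphi$, has cokernel controlled symbol-wise by iterated symmetric powers of $T_{X_1/X}$; since $J\cdot T_{X_1/X}=0$, a straightforward induction on $\ell$ yields $J^{\ell}\cdot F_{\ell}\Dmod_{X_1\to X}\subseteq \operatorname{image}\bigl(F_{\ell}\Dmod_{X_1}\to \Dmod_{X_1\to X}\bigr)$. Pulling this back along $f_1^{\ast}$ and applying it term by term in the complex $F_{k-n}(A^{\bullet}\otimes\Dmod_{Y\to X})$, whose operator orders range from $k-n$ up to $k$, a single factor of $J^{k}$ suffices to send every local section into the corresponding term of $F_{k-n}(A^{\bullet}\otimes\Dmod_{Y\to X_1})$. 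Taking $R^0f_{\ast}=R^0\varphi_{\ast}\circ R^0(f_1)_{\ast}$ and invoking Theorem~\ref{formula_log_resolution}(i,iii) for $f_1$ then gives (ii).

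The hard part will be carrying out this $J^{k}$-lifting in a way that is compatible with the \emph{differentials} of $A^{\bullet}$: the log-de-Rham differential on $Y$ mixes the $f_1$-horizontal directions with the exterior derivative along $\lceil G\rceil$ and the contribution $\alpha\cdot\mathrm{dlog}(g)\wedge(-)$, so one has to verify that the chosen local lifts assemble into a genuine filtered subcomplex whose $R^0(f_1)_{\ast}$ still computes $F_{k-n}\Mmod_r(h_1^{-\alpha})$ on the nose. This is the analogue of the filtered induction in \cite[Theorem~18.1]{MP1}, and should go through once the symbol-level calculation using the annihilation $J\cdot T_{X_1/X}=0$ is in hand.
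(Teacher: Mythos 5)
Your overall strategy is sound and close to the paper's. For part (i) your phrasing — strictness for $\varphi_+$ together with the unit map $m\mapsto m\otimes 1$ — is essentially a repackaging of what the paper does: the paper works directly on the resolution $Y$ and observes the inclusion of filtered complexes
$F_{k-n}\bigl(C^{\bullet}_{g^{-\alpha}}(-\lceil f^*D\rceil)\otimes_{\Dmod_Y}\Dmod_{Y\to X_1}\bigr)\hookrightarrow
F_{k-n}\bigl(C^{\bullet}_{g^{-\alpha}}(-\lceil f^*D\rceil)\otimes_{\Dmod_Y}\Dmod_{Y\to X}\bigr)$,
then pushes forward along $f$ and identifies both sides by Theorem~\ref{formula_log_resolution}. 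Either phrasing buys the same conclusion from the same strictness input, so (i) is fine.

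For part (ii) you correctly identify the key ingredient (the cited estimate $\varphi^*F_j\Dmod_X\cdot J^j\subseteq F_j\Dmod_{X_1}$, \cite[Lemma~18.6]{MP1}), but the difficulty you flag at the end — assembling local $J^k$-lifts into a genuine filtered subcomplex compatible with the twisted log-de-Rham differential — is a red herring that you create by trying to lift. The paper instead takes the \emph{quotient}: writing $A^{\bullet}=C^{\bullet}\otimes\Dmod_{Y\to X_1}\hookrightarrow B^{\bullet}=C^{\bullet}\otimes\Dmod_{Y\to X}$, one forms the short exact sequence of complexes
$$0\longrightarrow F_{k-n}A^{\bullet}\longrightarrow F_{k-n}B^{\bullet}\longrightarrow M^{\bullet}\longrightarrow 0,$$
whose cokernel terms are
$M^p=\shO_Y(-\lceil f^*D\rceil)\otimes\Omega_Y^{n+p}(\log E)\otimes\psi^*\bigl(\varphi^*F_{k+p}\Dmod_X/F_{k+p}\Dmod_{X_1}\bigr)$.
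The quotient is automatically a complex — there is nothing to assemble — and the cited lemma gives $J^{k+p}\cdot M^p=0$, hence $J^k\cdot M^p=0$ for every $p\le 0$, hence $J^k\cdot R^0f_*M^{\bullet}=0$. Since the long exact sequence identifies $\operatorname{coker}(\iota)$ with a subsheaf of $R^0f_*M^{\bullet}$, this forces $J^k\cdot I_k(D)$ (after the bookkeeping with line bundles) into the image of $\iota$, which is exactly (ii). If you reframe your lifting argument as this quotient argument, the compatibility concern you raise evaporates and the proof closes.
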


\begin{proof}
We may assume that $D=\alpha\cdot {\rm div}(h)$, for some $\alpha\in\QQ_{>0}$ and some nonzero $h\in\shO_X(X)$. 
Let $\psi\colon Y\to X_1$ be a log resolution of $(X_1,\varphi^*D)$ that is an isomorphism over $X_1\smallsetminus\varphi^{-1}(Z)$.
We put 
$$f=\varphi\circ\psi \,\,\,\,\,\,{\rm and} \,\,\,\,\,\, E=(f^*Z)_{\rm red}.$$
With the notation in 
\S\ref{complex_for_SNC}, consider the filtered complex $C^{\bullet}=C^{\bullet}_{g^{-\alpha}}(-\lceil f^*D\rceil)$, where 
$g=h\circ f$. We have an inclusion of complexes
$$A^{\bullet}=C^{\bullet}\otimes_{\Dmod_Y}\Dmod_{Y\to X_1}\hookrightarrow B^{\bullet}=C^{\bullet}\otimes_{\Dmod_Y}\Dmod_{Y\to X}.$$
Note that this is an injection due to the fact that $\shO_Y(-\lceil f^*D\rceil)$ and $\Omega_Y^q(\log E)$ are locally free sheaves of
$\shO_Y$-modules, while all the maps $F_p\Dmod_{Y\to X_1}\to F_p\Dmod_{Y\to X}$ are generically injective morphisms of locally 
free $\shO_Y$-modules. Consider, for any integer $k$, the short exact sequence of complexes
$$0\longrightarrow F_{k-n} A^{\bullet}\longrightarrow F_{k-n} B^{\bullet} \longrightarrow M_{\bullet}\longrightarrow 0.$$
Applying $\derR f_*$ and taking the corresponding long exact sequence, we obtain a short exact sequence
$$R^0f_*F_{k-n} A^{\bullet}\overset{\iota}\longrightarrow R^0f_*F_{k-n} B^{\bullet}\longrightarrow R^0f_*M^{\bullet}.$$
If $\beta = 1 - \alpha$, it follows from Theorem~\ref{formula_log_resolution} that
$$\derR f_*F_{k-n} B^{\bullet}=R^0f_*F_{k-n} B^{\bullet}\simeq h^{\beta}\shO_X\big(K_X+ kZ + H\big) \otimes_{\shO_X}
I_k(D)$$
and 
$$\derR g_*F_{k-n} A^{\bullet}=R^0g_*F_{k-n} A^{\bullet}\simeq h^{\beta}\shO_{X_1}\big(K_{X_1}+k Z_1 + \varphi^* H\big)\otimes_{\shO_{X_1}} I_k(\varphi^*D).$$
Therefore, after tensoring by $\shO_X (-H)$, the map $\iota$ induces a map
\begin{equation}\label{eq1_smqller_ideal}
\varphi_*\big(I_k(\varphi^*D)\otimes\shO_{X_1}(K_{X_1}+k Z_1)\big)\rightarrow
 I_k(D)\otimes_{\shO_X}\shO_X\big(K_X+k Z\big).
 \end{equation}
Finally, the map $\iota$ is compatible with restriction to open subsets of $X$. By restricting to an open subset $X_0$ in the complement of $Z$,
such that $f$ is an isomorphism over $X_0$, we see that the map in (\ref{eq1_smqller_ideal}) is the identity on $\omega_{X_0}$. We thus deduce the assertion in $i)$ by
tensoring (\ref{eq1_smqller_ideal}) with $\shO_X\big(-K_X-k Z\big)$.
Furthermore, we see that the assertion in $ii)$ follows if we show that $J^k\cdot R^0f_*M^{\bullet}=0$. Since
$$M^p=\shO_Y(-\lceil f^*D\rceil)\otimes_{\shO_Y}\Omega_Y^{n+p}(\log E)\otimes_{\shO_Y}\psi^*(\varphi^*F_{k+p}\Dmod_{X}/F_{k+p}\Dmod_{X_1}),$$
it is enough to show that under our assumption we have
$$\varphi^*F_j\Dmod_X\cdot J^j\subseteq F_j\Dmod_{X_1}\quad\text{for all}\quad j\geq 0.$$
This is proved in \cite[Lemma~18.6]{MP1}.
\end{proof}

We first use Theorem \ref{smaller_ideal} in order to give a triviality criterion for Hodge ideals in terms of invariants 
of a fixed resolution of singularities. We use this in turn in order to bound the largest root of the reduced Bernstein-Sato polynomial (i.e. $\widetilde{\alpha}_{h}$ defined in Example \ref{quasihomogeneous}) in terms of such invariants, in 
\cite[Corollary D]{MP3}.

\begin{proposition}\label{triviality_criterion}
Let $Z$ be  a  reduced divisor on the smooth variety $X$, and let $D = \alpha Z$, with $\alpha \in \QQ_{>0}$.
Let $f \colon Y \to X$ be a log resolution of $(X, Z)$ that is an isomorphism over $X\smallsetminus Z$
and such that the strict transform $\widetilde{Z}$ of $Z$ is smooth. We define
integers $a_i$ and $b_i$ by the expressions
$$f^*Z = \widetilde{Z} + \sum_{i=1}^m a_iF_i\quad\text{and}\quad
K_{Y/X} = \sum_{i=1}^m b_iF_i,$$
where $F_1,\ldots,F_m$ are the prime exceptional divisors. If 
\begin{equation}\label{eq_triviality_criterion}
\frac{b_i + 1}{a_i} \ge k + \alpha\quad\text{for}\quad 1 \le i \le m,
\end{equation}
then $I_k (D) = \shO_X \big((1 - \lceil \alpha\rceil) Z\big)$. In particular, if $0 < \alpha \le 1$, then $I_k (D) = \shO_X$.
\end{proposition}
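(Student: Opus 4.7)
The plan is to combine Theorem~\ref{smaller_ideal}(i) applied to the given log resolution $f$ itself with the explicit computation of Hodge ideals of SNC divisors from Proposition~\ref{Hodge_ideals_SNC}, and then to exhibit the constant function~$1$ as a global section of the resulting sheaf on~$Y$. A preliminary reduction via Lemma~\ref{periodicity} lets us assume $\alpha\in(0,1]$: writing $\alpha'=\alpha+1-\lceil\alpha\rceil\in(0,1]$, we have $I_k(\alpha Z)=I_k(\alpha'Z)\otimes\shO_X((1-\lceil\alpha\rceil)Z)$, and since $\alpha'\le\alpha$, the hypothesis $(b_i+1)/a_i\ge k+\alpha$ implies its analogue for $\alpha'$. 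Under this reduction the target becomes simply $I_k(D)=\shO_X$.

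Applying Theorem~\ref{smaller_ideal}(i) with $\varphi=f$, and using Proposition~\ref{Hodge_ideals_SNC} (which applies because $f^*D$ has SNC support $E=\widetilde{Z}+\sum F_i$) to expand $I_k(f^*D)=I_k(E)\otimes\shO_Y(E-\lceil f^*D\rceil)$, one obtains
$$f_*\big(I_k(E)\otimes\shO_Y(T)\big)\subseteq I_k(D),\qquad T:=K_{Y/X}+k(E-f^*Z)+E-\lceil f^*D\rceil.$$
A direct computation, using $\lceil\alpha\rceil=1$ to kill the $\widetilde{Z}$ contribution, gives
$$T=\sum_{i=1}^{m}\big(b_i+k+1-\lceil(k+\alpha)a_i\rceil\big)F_i,$$
and the hypothesis, together with the integrality of $b_i+1$, upgrades to $b_i+1\ge\lceil(k+\alpha)a_i\rceil$, so every coefficient of $T$ is $\ge k\ge 0$.

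The main task, and the one requiring care, is to exhibit the constant function~$1$ as a local section of $I_k(E)\otimes\shO_Y(T)$ at every point of~$Y$; this amounts to a bookkeeping exercise using the monomial description of $I_k(E)$. At a point $y\in E$ where $r$ components $E_0,\ldots,E_{r-1}$ of~$E$ pass through with local equations $x_0,\ldots,x_{r-1}$, the description from~\cite[Proposition~8.2]{MP1} presents $I_k(E)_y$ as the ideal generated by the monomials $\omega=\prod_l x_l^{c_l}$ with $0\le c_l\le k$ and $\sum_l c_l=(r-1)k$. Assigning $c_l=0$ to the index corresponding to $\widetilde{Z}$ (if $\widetilde{Z}$ passes through~$y$) and $c_l=k$ to every other index except possibly one (lowered as needed to enforce $\sum c_l=(r-1)k$) produces a valid such $\omega$; the reciprocal $\eta:=1/\omega$ then has vanishing order~$0$ along $\widetilde{Z}$ and pole order at most~$k$ along each $F_i$ through~$y$, hence defines a local section of $\shO_Y(T)_y$ because $T$ has coefficient~$0$ along $\widetilde{Z}$ and coefficient $\ge k$ along every~$F_i$. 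Thus $1=\omega\cdot\eta$ is a local section of $I_k(E)\otimes\shO_Y(T)$ at every~$y$, and therefore a global one by the sheaf axioms; its image in $I_k(D)$ is~$1$, giving $I_k(D)=\shO_X$ as required.
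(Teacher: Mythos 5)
Your proof is correct and follows essentially the same route as the paper: reduce to $\alpha\in(0,1]$ via Lemma~\ref{periodicity}, apply Theorem~\ref{smaller_ideal}(i) together with Proposition~\ref{Hodge_ideals_SNC} to land on the same twist divisor (your $T$ equals the paper's $F$, since $a_i\in\ZZ$ gives $\lceil(k+\alpha)a_i\rceil=ka_i+\lceil\alpha a_i\rceil$), and then observe that every coefficient of $T$ along the $F_i$ is at least $k$. Where the paper simply invokes the global containment $\shO_Y(-k\sum F_i)\subseteq I_k(E)$ from the monomial description and concludes $\shO_X=f_*\shO_Y\hookrightarrow I_k(D)$, you unwind that same fact into an explicit pointwise check that $1$ is a local section of $I_k(E)\otimes\shO_Y(T)$ — a more verbose but equivalent finish.
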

\begin{proof}
If $D'=\alpha'Z$, where $\alpha'=\alpha+1-\lceil\alpha\rceil$, then it follows from 
Lemma \ref{periodicity} that $I_k(D)=I_k(D')\otimes\shO_X\big((1 - \lceil \alpha\rceil) Z\big)$. 
Since the inequalities (\ref{triviality_criterion}) clearly also hold if we replace $\alpha$ by $\alpha'$,
it follows that it is enough to treat the case 
$0 < \alpha \le 1$.

First, note that since $f^*D$ has simple normal crossings, by Proposition \ref{Hodge_ideals_SNC} we have 
$$I_k (f^* D) = I_k (E) \otimes \shO_Y \big( \sum_{i=1}^m (1 - \lceil \alpha a_i \rceil) F_i\big),$$
where $E = (f^* Z)_{{\rm red}}=\widetilde{Z}+\sum_{i=1}^mF_i$. We apply Theorem \ref{smaller_ideal} i) to obtain the inclusion
\begin{equation}\label{eqn:inclusion}
f_* \big(I_k(E) \otimes \shO_Y (F)  \big) \hookrightarrow I_k(D),
\end{equation}
where
$$F : = \sum_{i=1}^m \big(b_i + k + 1 - k a_i - \lceil \alpha a_i \rceil\big) F_i.$$
On the other hand, since $E = \tilde Z + \sum_{i=1}^m F_i$ has simple normal crossings and $\widetilde{Z}$ is smooth,  
it follows from the description of Hodge ideals of simple normal crossing divisors in \cite[Proposition~8.2]{MP1} that we have 
$$\shO_Y (-k\cdot \sum_{i=1}^m F_i) \subseteq I_k (E).$$
Note that the inequalities in (\ref{eq_triviality_criterion}) imply
 $b_i + 1 \ge ka_i + \lceil \alpha a_i \rceil$ for all $i$, hence the divisor $F-k\cdot\sum_{i=1}^mF_i$ is effective
We thus deduce using 
(\ref{eqn:inclusion}) that we have
$$\shO_X = f_* \shO_Y \hookrightarrow I_k (D).$$
\end{proof}

\begin{remark}
More generally, suppose that we write $Z = \sum_{j=1}^r Z_j$, and consider an effective $\QQ$-divisor 
$D = \sum_{j=1}^r \alpha_j Z_j$ supported on $Z$. For simplicity, let us assume that 
$0<\alpha_j\leq 1$ for all $j$. If $f$ is a log resolution as in Proposition \ref{triviality_criterion}, and we write
$$f^* Z_j = \widetilde{Z_j} + \sum_{i=1}^m a_i^j F_i$$ 
for all $j$ (so that $a_i=\sum_{j=1}^ra_i^j$), then the same proof gives $I_k(D)=\shO_X$ if 
$$b_i + 1 \ge ka_i + \sum_{i=1}^m \alpha_j a_i^j \,\,\,\,\,\,\text{for all}~i.$$
\end{remark}

\medskip

We now turn our attention to non-triviality criteria for the Hodge ideals $I_k (D)$ in terms of the multiplicity of 
$D$, and of its support $Z$, along a given subvariety.

\begin{corollary}\label{inclusion_power}
Let $D$ be an effective $\QQ$-divisor on the smooth variety $X$, and let $Z$ be the support of $D$. 
If $W$ is an irreducible closed subset of $X$ of codimension $r$ such that ${\rm mult}_WZ=a$ and 
${\rm mult}_WD=b$, and if $q$ is a non-negative integer such that
$$b+ka>q+r+2k-1,$$
then $I_k(D)\subseteq I_W^{(q)}$, the $q$-th symbolic power of $I_W$.
In particular, if 
$${\rm mult}_WD>\frac{q+r+2k-1}{k+1},$$
then $I_k(D)\subseteq I_W^{(q)}$. 
\end{corollary}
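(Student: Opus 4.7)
The plan is to apply part~(ii) of Theorem~\ref{smaller_ideal} to the blow-up of $X$ along $W$ and to finish with a colon-ideal computation at the generic point of $W$. Since the containment $I_k(D)\subseteq I_W^{(q)}$ is detected at that generic point, after shrinking $X$ I may assume $W$ is smooth of codimension $r$. The hypothesis $b+ka>q+r+2k-1\ge 0$ forces $a\ge 1$ (i.e.\ $W\subseteq Z$), since otherwise $a=b=0$. The case $r=1$ can be handled directly from the containment $I_k(D)\subseteq\shO_X(Z-\lceil D\rceil)$ of Remark~\ref{always_nontrivial}: it gives $\ord_W I_k(D)\ge\lceil b\rceil-1$, and the hypothesis forces $b>q+k$, hence $\lceil b\rceil-1\ge q$.

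Assume now $r\ge 2$ and let $\varphi\colon X_1\to X$ be the blow-up of $W$, with exceptional divisor $E$; then $\varphi$ is an isomorphism over $X\smallsetminus Z$ and $X_1$ is smooth. The standard identities give
$$K_{X_1/X}=(r-1)E,\quad \varphi^*D=\widetilde D+bE,\quad \varphi^*Z=\widetilde Z+aE,\quad Z_1=\widetilde Z+E,\quad Z_1-\varphi^*Z=(1-a)E.$$
Since $T_{X_1/X}$ is supported on and killed by $I_E=I_W\cdot\shO_{X_1}$, Theorem~\ref{smaller_ideal}(ii) applies with $J=I_W$. Combined with the inclusion $I_k(\varphi^*D)\subseteq\shO_{X_1}(Z_1-\lceil\varphi^*D\rceil)$ from Remark~\ref{always_nontrivial} and the fact that $\widetilde Z-\lceil\widetilde D\rceil\le 0$, the sheaf inside the pushforward is contained in $\shO_{X_1}(-cE)$, where
$$c=ka+\lceil b\rceil-r-k.$$
The integrality of $a,k,q,r$ together with the hypothesis $b+ka>q+r+2k-1$ force $\lceil b\rceil\ge q+r+2k-ka$, hence $c\ge q+k$. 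A Hartogs-type computation for the blow-up of a smooth center of codimension $\ge 2$ gives $\varphi_*\shO_{X_1}(-cE)=I_W^c$, so
$$I_W^k\cdot I_k(D)\subseteq I_W^c\subseteq I_W^{q+k}.$$
At the generic point of $W$, $\shO_{X,W}$ is a regular local ring of dimension $r$ with maximal ideal $\mathfrak m$ generated by a regular sequence, and the standard identity $(\mathfrak m^{q+k}:\mathfrak m^k)=\mathfrak m^q$ yields $I_k(D)\subseteq I_W^{(q)}$. The "in particular" assertion follows from the main inequality once one notes that when $a\ge b$ (e.g.\ when all coefficients of $D$ are at most $1$) one has $(k+1)b\le b+ka$, so the condition $b>(q+r+2k-1)/(k+1)$ implies $b+ka>q+r+2k-1$.

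The main obstacle I expect is the careful bookkeeping of the divisorial contributions so that the inequality $c\ge q+k$ falls out cleanly from the integrality of all the terms except $b$, and the verification of the pushforward identity $\varphi_*\shO_{X_1}(-cE)=I_W^c$ for the blow-up of a smooth subvariety; both are routine once the setup is in place, but they are the only steps where something could go wrong numerically.
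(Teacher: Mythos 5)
Your proof is correct and follows essentially the same route as the paper: shrink so $W$ is smooth, blow up along $W$, apply Theorem~\ref{smaller_ideal}(ii) with $J=I_W$, combine with $I_k(\varphi^*D)\subseteq\shO_{X_1}(Z_1-\lceil\varphi^*D\rceil)$, and conclude via $\varphi_*\shO_{X_1}(-cE)=I_W^c$ and a colon computation in the regular local ring $\shO_{X,W}$. The paper leaves all of this arithmetic (including the observation that $c\ge q+k$ follows from integrality) to the reader and does not separate out the $r=1$ case, where the blow-up is an isomorphism and the statement instead drops out directly from $I_k(D)\subseteq\shO_X(Z-\lceil D\rceil)$ as you note; your handling of both points is a sound fleshing-out rather than a different argument. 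One small remark: you correctly flag that the ``in particular'' statement requires $a\ge b$; the paper's proof asserts this ``by assumption,'' which presumably refers to the implicit normalization $\lceil D\rceil=Z$ (so that all coefficients of $D$ lie in $(0,1]$), and your parenthetical gives exactly the right reading.
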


\begin{proof}
After possibly restricting to a suitable open subset of $X$ meeting $W$, we may assume that $W$ is smooth. The first assertion in the corollary
follows by applying Theorem \ref{smaller_ideal}(ii) to the blow-up $\varphi\colon X_1\to X$ along $W$. Note that we may take $J=I_W$ by \cite[Example~18.7]{MP1}, while $I_k (\varphi^*D) \subseteq \shO_{X_1} (Z_1 - \lceil \varphi^*D \rceil)$. The last assertion follows thanks to the fact that by assumption we have $a\geq b$. 
\end{proof}

\begin{remark}
An interesting consequence of the above corollary is that if $Z$ is a reduced divisor on the smooth,
$n$-dimensional variety $X$, $k$ is a positive integer, and
$x\in X$ is a point such that 
$${\rm mult}_xZ\geq 2+\frac{n}{k},$$
then $I_k(D)$ is non-trivial at $x$ for every effective $\QQ$-divisor $D$ with support $Z$ (no matter how small the coefficients).
\end{remark}

\begin{example}[{\bf Ordinary singularities, I}]\label{eg_ordinary1}
Let $X$ be a smooth variety of dimension $n$, and $Z$ a reduced divisor with an ordinary singularity 
at $x\in X$ (recall that this means that the projectivized tangent cone of $Z$ at $x$ is smooth), for instance a 
cone over a smooth hypersurface. 
If $D=\alpha Z$, with $\alpha$ a rational number satisfying $0<\alpha\leq 1$, then
$${\rm mult}_xZ\leq\frac{n}{k+\alpha} \implies I_k(D)_x=\shO_{X,x}.$$
Note that the converse of this statement will be proved in Corollary \ref{triviality_ordinary_case}  below.

Indeed, the assumption implies that after possibly replacing
$X$ by an open neighborhood of $x$, the blow-up
$f\colon Y\to X$ of $X$ at $x$ gives a log resolution of $(X,Z)$. Let $E=F+\widetilde{Z}$, where $F$ is the exceptional divisor of $f$ and $\widetilde{Z}$
is the strict transform of $Z$.
If $m={\rm mult}_x Z$, then we deduce from Theorem~\ref{smaller_ideal}
that
$$f_*\big(I_k(\varphi^*D)\otimes_{\shO_Y}\shO_Y(K_{Y/X}+k(E - f^*Z))\big)$$
$$=f_*\big(I_k(\varphi^*D)\otimes_{\shO_Y}\shO_Y((n-1+k-km)F)\big)\subseteq I_k(D).$$
Now since $\varphi^*D$ is supported on the simple normal crossings divisor $E$, by Proposition \ref{Hodge_ideals_SNC} we have 
$$I_k (\varphi^*D) = I_k (E) \otimes_{\shO_Y} \shO_Y ((1- \lceil \alpha m\rceil) F),$$
where we use the fact that $\lceil \alpha \rceil = 1$.
Moreover, by \cite[Proposition~8.2]{MP1} we have 
$$I_k(E)=\big(\shO_Y(-\widetilde{Z})+\shO_Y(-F)\big)^k\supseteq \shO_Y(-kF).$$
Now by assumption
$$n-km-\lceil \alpha m\rceil\geq 0,$$
hence we deduce $I_k(D)=\shO_X$. 
\end{example}

\begin{example}[{\bf Ordinary singularities, II}]\label{eg_ordinary2}
With considerable extra work, one can say more in the ordinary case. We keep the notation of the 
previous example, and assume that $x$ is a singular point of $Z$, hence $m\geq 2$. 
If $k$ is a positive integer such that
$$(k-1)m +\lceil \alpha m\rceil<n\quad\text{and}\quad k\leq n-2,$$
then we have
$$I_k(D)={\mathfrak m}_x^{km +\lceil \alpha m \rceil-n}$$
in a neighborhood of $x$, where ${\mathfrak m}_x$ is the ideal defining $x$
(with the convention that ${\mathfrak m}_x^j=\shO_X$ if $j\leq 0$).
The argument is similar to that in \cite[Proposition~20.7]{MP1}, so we omit it.
\end{example}


In what follows we make use of some general properties of Hodge ideals that will be proved in
Ch.\ref{section_restriction}, namely the Restriction and Semicontinuity Theorems.

\begin{corollary}\label{triviality_ordinary_case}
If $X$ is a smooth $n$-dimensional variety, $Z$ is a reduced divisor with an ordinary singularity of multiplicity $m\geq 2$ at $x\in X$, and 
$D=\alpha Z$ with $0<\alpha\leq 1$, then 
$$I_k (D)_x = \shO_{X,x} \iff m \le \frac{n}{k + \alpha}.$$
\end{corollary}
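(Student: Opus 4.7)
The ``$\Leftarrow$'' direction is immediate from Example~\ref{eg_ordinary1}. For ``$\Rightarrow$'' the plan is to prove the contrapositive: assuming $(k+\alpha)m > n$, I will show $I_k(D)_x \neq \shO_{X,x}$. By Remark~\ref{rmk_inclusion_ideals} (using the descending triviality property of Hodge ideals for divisors of the form $\alpha Z$, proved in \cite{MP3}), if $I_k(D)_x = \shO_{X,x}$ then $I_j(D)_x = \shO_{X,x}$ for every $0 \leq j \leq k$. Setting $j^{*} := \min\{j \geq 0 : (j+\alpha)m > n\}$, which satisfies $j^{*} \leq k$ by assumption, it therefore suffices to derive a contradiction from $I_{j^{*}}(D)_x = \shO_{X,x}$.

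The core argument splits into three regimes. If $j^{*} = 0$, then $\alpha m > n$ exceeds the log canonical threshold $n/m$ of the ordinary singularity at $x$, so Proposition~\ref{formula_I_0} gives $I_0(D)_x \neq \shO_{X,x}$. If $j^{*} \geq 1$ and the hypotheses of Example~\ref{eg_ordinary2} apply at $j = j^{*}$ --- that is, $(j^{*}-1)m + \lceil \alpha m \rceil < n$ and $j^{*} \leq n-2$ --- then the formula $I_{j^{*}}(D) = \mathfrak{m}_x^{j^{*} m + \lceil \alpha m \rceil - n}$ has positive exponent, since $(j^{*}+\alpha)m > n$ translates to $j^{*} m + \lceil \alpha m \rceil > n$, giving the desired contradiction. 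If instead $j^{*} \geq n$, Corollary~\ref{cor_singular_nontrivial} directly produces $I_{j^{*}}(D)_x \neq \shO_{X,x}$.

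The main obstacle lies in the remaining boundary cases that escape all three regimes: $j^{*} = n-1$ (which arises only when $n$, $m$, and $\alpha$ satisfy certain small-value constraints), and the degenerate situation $(j^{*}-1+\alpha)m = n$ exactly, which forces $\alpha m \in \ZZ$. To treat these I plan to invoke the Semicontinuity Theorem (Theorem~\ref{main_list}(vii)): construct a one-parameter family of divisors $\{D_t\}_{t \in T}$ with $D_0 = D$ and a section $s \colon T \to X$ with $s(0) = x$, arranged so that for $t$ in an open dense subset of $T$ the hypotheses of Example~\ref{eg_ordinary2} are satisfied by $D_t$ and yield $I_k(D_t)_{s(t)} \subseteq \mathfrak{m}_{s(t)}^{q}$ for some $q \geq 1$. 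By Semicontinuity the set where this inclusion fails is open in $T$; since its complement contains an open dense set it must be all of $T$, and in particular $I_k(D)_x \subseteq \mathfrak{m}_x^{q}$, contradicting triviality. The delicate point is to construct such a family so that the flatness/relativity hypotheses of Theorem~\ref{main_list}(vii) and the genericity conditions of Example~\ref{eg_ordinary2} hold simultaneously along $T$.
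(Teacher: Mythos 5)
Your ``if'' direction is fine. Your reduction to $j^{*}$ via descending triviality is valid (granting the \cite{MP3} import, whereas the paper avoids it and keeps this corollary self-contained within this paper's methods), and the regimes where $j^{*}=0$, where Example~\ref{eg_ordinary2} applies directly, and where $j^{*}\ge n$ are handled correctly. The genuine gap is in your plan for the remaining boundary cases, and the Semicontinuity Theorem is the wrong tool there. Once the ambient dimension $n$, the multiplicity $m$, the coefficient $\alpha$, and the index $j^{*}$ (or $k$) are fixed, the integer inequalities of Example~\ref{eg_ordinary2} either hold or fail, and none of these quantities move under a one-parameter deformation of $(X,D)$ in which the singularity at $s(t)$ remains ordinary of multiplicity $m$. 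Any family $\{D_t\}$ in which ``general $t$ satisfies the hypotheses of Example~\ref{eg_ordinary2}'' would therefore have to change the numerics, i.e.\ deform to a different singularity type; but then the conclusion you would apply at general $t$ is precisely the statement you are trying to prove (indeed the paper later derives Corollary~\ref{good_bound} from Corollary~\ref{triviality_ordinary_case} by exactly this deformation-plus-semicontinuity strategy, so using it here would be circular). In short, there is no family to construct.

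The idea that actually closes these cases is to change $n$ rather than $t$. The paper \emph{raises the dimension}: choose $r\ge 0$ with $n+r = mk + \lceil m\alpha\rceil - 1$ (possible because $m(k+\alpha)>n$ forces $mk+\lceil m\alpha\rceil\ge n+1$), form $Z'\subseteq X\times\CC^r$ defined by $h + y_1^m+\cdots+y_r^m$, which is again reduced with an ordinary singularity of the same multiplicity $m$ at $(x,0)$, and invoke the Restriction Theorem (Theorem~\ref{restriction}, in the codimension-$r$ form of Remark~\ref{restriction_higher_codimension}) to reduce the non-triviality of $I_k(\alpha Z)$ at $x$ to that of $I_k(\alpha Z')$ at $(x,0)$. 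In the new ambient dimension $n'=n+r$ the hypotheses of Example~\ref{eg_ordinary2} become $(k-1)m+\lceil\alpha m\rceil < n'$ (equivalent to $m>1$) and $k\le n'-2$; the only escape is $k\ge n'-1$, which pins down $m=2$, $k=1$, $\alpha\le\tfrac12$, $n'=2$, and that nodal case is settled by Example~\ref{example_node}. Also note a small mis-statement in your boundary analysis: the obstruction $(j^{*}-1)m+\lceil\alpha m\rceil = n$ can occur with $\alpha m\notin\ZZ$ (e.g.\ $n=4$, $m=2$, $\alpha m=3/2$, $j^{*}=2$); it is $(j^{*}-1+\alpha)m=n$ that forces $\alpha m\in\ZZ$, which is a strictly smaller set of cases.
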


\begin{proof}
The ``if" part follows directly from Example~\ref{eg_ordinary1}. For the converse, we need to show that if
${\mathfrak m}_x$ is the ideal defining $x$ and $m > \frac{n}{k + \alpha}$, then $I_k(D)\subseteq {\mathfrak m}_x$.
We may assume that $Z$ is defined in $X$ by $h\in\shO_X(X)$.
Let $r\geq 0$ be such that $n+r=mk+\lceil m\alpha\rceil-1$ and consider the divisor $Z'$ in $X\times {\mathbf C}^r$
defined by $h+y_1^m+\cdots+y_r^m$, where $y_1,\ldots,y_r$ are the coordinates on ${\mathbf C}^r$. 
It is easy to check that $Z'$ is reduced and has an ordinary singularity at $(x,0)$.
By the Restriction Theorem
(see Theorem~\ref{restriction} and Remark \ref{restriction_higher_codimension}  below), we have $I_k(\alpha Z)\subseteq I_k(\alpha Z')\cdot\shO_X$, where we consider $X$  embedded in
$X\times {\mathbf C}^r$ as $X\times\{0\}$. After replacing $X$ and $Z$ by $X'$ and $Z'$, we may thus assume that
$n=mk+\lceil m\alpha\rceil-1$. If $k\leq n-2$, then we may apply Example~\ref{eg_ordinary2} to conclude that $I_k(D)\subseteq {\mathfrak m}_x$.
Otherwise we have 
$$k\geq n-1=mk+\lceil m\alpha\rceil-2,$$
which easily implies $m=2$, $k=1$, and $\alpha\leq\frac{1}{2}$, hence $n=2$. Since $Z$ has an ordinary singularity at $x$, it follows that it must be a node, and in this case we have $I_1(\alpha Z)={\mathfrak m}_x$ by  Example~\ref{example_node}.
\end{proof}

\begin{remark}\label{ordinary_vfil}
One can give an alternative argument, arguing as follows. 
Suppose that $Z$ is a reduced divisor in $X$, defined by $h\in\shO_X(X)$.
It is shown in \cite[Corollary~C]{MP3} that for $0<\alpha\leq 1$, we have
$I_k(\alpha Z)=\shO_X$ if and only if $k\leq\widetilde{\alpha}_h-\alpha$.
If $Z$ has an ordinary singularity at $x\in X$, of multiplicity $m\geq 2$, then after replacing $X$ by a suitable
neighborhood of $x$, we have $\widetilde{\alpha}_h=\frac{n}{m}$
(see \cite[\S2.5]{Saito-MLCT}), and we recover the assertion in Corollary~\ref{triviality_ordinary_case}.
\end{remark}

\begin{question}\label{better_bound}
Is it true that if $X$ is a smooth $n$-dimensional variety, $Z$ is a reduced divisor on $X$,
$D$ is an effective $\QQ$-divisor with support $Z$,
and for a point $x\in Z_{\rm sing}$ we have
$$k\cdot {\rm mult}_x Z + {\rm mult}_x D >n,$$
then $I_k(D)\subseteq {\mathfrak m}_x$? 
\end{question}

This would be a natural improvement of Corollary \ref{inclusion_power}, 
and it does hold when $D$ is reduced by \cite[Corollary 21.3]{MP1}. 
We may of course assume that $\lceil D\rceil =Z$, since otherwise the inclusion is trivial (see Remark~\ref{old_ideal}).
At the moment we have:

\begin{corollary}\label{good_bound}
Question \ref{better_bound} has a positive answer if $D$ is of the form $D=\alpha Z$. 
\end{corollary}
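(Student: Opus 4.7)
The plan is to derive the corollary from the description of triviality of $I_k(\alpha Z)$ in terms of the minimal exponent. First I would reduce to the range $0<\alpha\le 1$: the discussion immediately before the statement allows the assumption $\lceil D\rceil=Z$, and for $D=\alpha Z$ this forces $\lceil\alpha\rceil=1$. Setting $m={\rm mult}_x Z$, which is $\ge 2$ because $x\in Z_{{\rm sing}}$, the hypothesis of Question \ref{better_bound} rewrites as the single inequality $k+\alpha>n/m$.

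The two ingredients I would invoke are as follows. Theorem \ref{other_paper}(2), reproducing \cite[Corollary~C]{MP3}, asserts that for $\alpha\in(0,1]$ we have $I_k(\alpha Z)_x=\shO_{X,x}$ if and only if $k+\alpha\le\widetilde{\alpha}_Z(x)$, where $\widetilde{\alpha}_Z(x)$ is the minimal exponent of $Z$ at $x$. The second ingredient is the classical upper bound
$$\widetilde{\alpha}_Z(x)\le\frac{n}{m},$$
a result of Saito (see \cite{Saito-MLCT}, cf.\ the ordinary case recorded in Remark \ref{ordinary_vfil} where this becomes an equality), which can be established independently via the Bernstein--Sato polynomial. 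Combining the two, the assumption $k+\alpha>n/m\ge\widetilde{\alpha}_Z(x)$ rules out triviality, so $I_k(\alpha Z)_x\subsetneq\shO_{X,x}$; since $\shO_{X,x}$ is local, this is precisely the desired containment $I_k(\alpha Z)_x\subseteq{\mathfrak m}_x\cdot\shO_{X,x}$.

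The hard part is really the upper bound on $\widetilde{\alpha}_Z(x)$: it is external to the birational methods developed in this paper, which is why the corollary is naturally presented as contingent on the input from \cite{MP3}. A direct approach purely within the framework of this paper---say, using Theorem \ref{smaller_ideal} applied to the blow-up $\pi\colon X_1\to X$ of $x$, in the spirit of Example \ref{eg_ordinary1} and Corollary \ref{triviality_ordinary_case}---seems to break down when $Z$ is not ordinary at $x$, because the trick of enlarging $X$ by ${\mathbf C}^r$ and adding $y_1^m+\cdots+y_r^m$ preserves multiplicity but does not upgrade a non-ordinary singularity to an ordinary one, so no reduction to a model situation is available. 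The fact that the corollary is stated only for $D=\alpha Z$, and not for arbitrary effective $\QQ$-divisors supported on $Z$, mirrors the current reach of the minimal-exponent machinery in \cite{MP3}.
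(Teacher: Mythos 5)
Your argument takes a genuinely different route from the paper, and there are two significant issues to flag.

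First, your claim that \textquotedblleft a direct approach purely within the framework of this paper\ldots seems to break down when $Z$ is not ordinary at $x$, because\ldots no reduction to a model situation is available\textquotedblright\ is not correct: that reduction is exactly what the paper carries out. The paper's proof uses neither the minimal exponent nor any result from \cite{MP3}. Rather, following the proof of \cite[Theorem~E]{MP1}, it constructs a deformation of $Z$ over a smooth parameter space $U$ such that the special fiber over $t_0$ recovers $(Z,x)$, while the general fiber $F_t$ has an \emph{ordinary} singularity of multiplicity $m={\rm mult}_x Z$ at $(x,t)$. By Corollary~\ref{triviality_ordinary_case}, the hypothesis $km+\lceil\alpha m\rceil>n$ forces $I_k(\alpha F_t)\subseteq\mathfrak m_{(x,t)}$ for $t$ general, and the Semicontinuity Theorem~\ref{semicontinuity} (since the locus where nontriviality fails is open, hence either empty or dense) transfers this to $t_0$. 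The point you missed is that one deforms the divisor inside a family rather than enlarging the ambient space and trying to upgrade the singularity in place.

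Second, your argument has a potential circularity that you should resolve before relying on it. The introduction to this paper states explicitly that Corollary~\ref{good_bound} is one of the inputs used in \cite[\S6]{MP3} to \emph{derive} effective bounds for $\widetilde\alpha_Z$ --- in particular, the upper bound $\widetilde\alpha_Z(x)\le n/m$ appears in that scheme as a consequence of the corollary plus the $V$-filtration characterization $I_k(\alpha Z)_x=\shO_{X,x}\iff k+\alpha\le\widetilde\alpha_Z(x)$. Your proof assumes the upper bound as an independent input and runs the implication in reverse. That is logically fine only if the bound $\widetilde\alpha_Z(x)\le n/m$ for arbitrary (possibly non-isolated) hypersurface singularities has a proof independent of the Hodge-ideal machinery; you assert that it is a result of Saito in \cite{Saito-MLCT}, but you should verify this, since the remark you cite (Remark~\ref{ordinary_vfil}) records only the equality $\widetilde\alpha_h=n/m$ in the ordinary case, not the general inequality. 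The safe, self-contained argument is the semicontinuity/deformation one.
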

\begin{proof}
We may assume that $\alpha\leq 1$ and, arguing as in
the proof of \cite[Theorem~E]{MP1}, we construct a reduced divisor $F$ on $X\times U$, for a smooth variety $U$, such that for  $t\in U$ general the divisor $F_t=F\vert_{X\times\{t\}}$ is reduced, with an ordinary singularity at $(x,t)$ of multiplicity $m={\rm mult}_xZ$,
 and for some $t_0\in U$, the isomorphism
$X\simeq X\times\{t_0\}$ maps $D$ to $F_{t_0}$. In this case Corollary~\ref{triviality_ordinary_case} implies that $I_k(F_t)$ vanishes at $(x,t)$
for $t\in U$ general, and the Semicontinuity Theorem (see Theorem~\ref{semicontinuity} below) implies that $I_k(F_{t_0})$ vanishes at $(x,t_0)$.
\end{proof}

This allows us in particular to provide an analogue of \cite[Theorem~A]{MP1}:

\begin{corollary}\label{smoothness_equivalence}
If $D$ is of the form $D = \alpha Z$, then 
$$Z {\rm ~is ~smooth ~} \iff I_k (D) = \shO_X (Z - \lceil D \rceil )\,\,\,\,\,\,{\rm for ~all~}k.$$
\end{corollary}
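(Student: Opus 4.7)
The plan is to establish the two implications separately, using the smooth-case computation for the forward direction and the non-triviality criterion of Corollary~\ref{good_bound} for the converse.

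First I would prove the forward implication. Assume $Z$ is smooth. The question is local, so we may write $D = \alpha \cdot {\rm div}(h)$. By Corollary~\ref{smooth_case}, for every $k \ge 0$ we have
$$F_k \Mmod(h^{-\alpha}) = \shO_X\big((k+1)Z - \lceil D \rceil\big) h^{-\alpha}.$$
Comparing with the definition of $I_k(D)$ (via Remark~\ref{old_ideal}, $F_k\Mmod(h^{-\alpha}) = I_k'(D) \otimes \shO_X((k+1)Z - \lceil D\rceil) h^{-\alpha}$), we see $I_k'(D) = \shO_X$, hence $I_k(D) = \shO_X(Z - \lceil D \rceil)$ for all $k$.

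For the converse, suppose $I_k(D) = \shO_X(Z - \lceil D\rceil)$ for all $k$, and assume for contradiction that $Z$ is singular at some point $x \in X$; set $m = {\rm mult}_x Z \ge 2$. First I reduce to the case $\alpha \in (0,1]$ via the periodicity in Lemma~\ref{periodicity}: writing $\alpha' = \alpha + 1 - \lceil \alpha \rceil \in (0,1]$ and $B = \alpha' Z$, we have $\lceil B \rceil = Z$ and
$$I_k(D) = I_k(B) \otimes \shO_X(Z - \lceil D\rceil),$$
so the hypothesis is equivalent to $I_k(\alpha' Z) = \shO_X$ for all $k$. Thus we may and do assume $\alpha \in (0,1]$ and the hypothesis reads $I_k(\alpha Z) = \shO_X$ for all $k$.

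The key step is then to invoke Corollary~\ref{good_bound}: since $D = \alpha Z$ is of the required form, and since
$$k \cdot {\rm mult}_x Z + {\rm mult}_x D = (k + \alpha) m > n$$
for every $k$ with $k > n/m - \alpha$, it follows that $I_k(\alpha Z) \subseteq \mathfrak{m}_x$ for all sufficiently large $k$. This contradicts $I_k(\alpha Z) = \shO_X$, so $Z$ must be smooth. The main content is bundled into Corollary~\ref{good_bound}, whose proof rests on Corollary~\ref{triviality_ordinary_case} together with the semicontinuity theorem; granted that result, the argument here is essentially a packaging step.
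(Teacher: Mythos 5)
Your proof is correct and follows essentially the same route as the paper: the forward direction via Corollary~\ref{smooth_case}, and the converse by reducing to $\alpha\in(0,1]$ and invoking Corollary~\ref{good_bound} to produce nontrivial $I_k(D)$ at a singular point. The paper's version is terser (it phrases the threshold as $k>n/2-\alpha$ using $m\ge 2$ rather than $k>n/m-\alpha$, and leaves the forward direction and the periodicity reduction implicit), but the substance is identical.
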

\begin{proof}
It suffices to assume $0 < \alpha \le 1$, in which case the condition becomes $I_k (D) = \shO_X$ for all $k$.
By Corollary \ref{good_bound} however, if ${\rm mult}_x Z \ge 2$, then $I_k (D) \subseteq \frak{m}_x$ for all
$k > \frac{n}{2} - \alpha$.
\end{proof}

\subsection{Vanishing theorem}\label{vanishing}
As usual, we consider an effective ${\mathbf Q}$-divisor $D$ with support $Z$, on the smooth variety $X$.
In this section we assume that $X$ is projective, and prove a vanishing theorem for Hodge ideals, extending 
\cite[Theorem F]{MP1} as well as Nadel Vanishing for $\QQ$-divisors.

We start by choosing a positive integer $\ell$ such that
$\ell D$ is an integral divisor, and further assume that 
there exists a line bundle $M$ on $X$ such that 
\begin{equation}\label{root}
M^{\otimes \ell} \simeq \shO_X (\ell D),
\end{equation}
so that the setting of \S\ref{global_setting} applies. We note that 
this can always be achieved after passing to a finite flat cover of $X$.

\begin{theorem}\label{vanishing_Hodge_ideals}
Let $X$ be a smooth projective variety of dimension $n$ and $D$ an effective $\QQ$-divisor on $X$ such that 
$(\ref{root})$ is satisfied. Let $L$ be a line bundle on  $X$ such that $L+Z - D$ is ample. For some $k \ge 0$, assume that the pair $(X,D)$ 
is reduced $(k-1)$-log-canonical, i.e. $I_0 (D) = \cdots = I_{k-1} (D) = \shO_X (Z - \lceil D \rceil)$.\footnote{Recall from Definition \ref{reduced-k-log-can} that equivalently this means
$I_0^\prime (D) = \cdots = I_{k-1}^\prime (D) = \shO_X$. By convention the condition is vacuous when $k =0$.}
Then we have:

\begin{enumerate}
\item If $k \le n$, and $L (pZ - \lceil D \rceil)$ is ample for all $2 \le p \le k+1$, then 
$$H^i \big(X, \omega_X \otimes L ((k+1)Z)  \otimes I_k (D) \big) = 0$$
for all $i \ge 2$. Moreover, 
$$H^1 \big(X, \omega_X \otimes L ((k+1)Z) \otimes I_k (D) \big) =  0$$
holds if $H^j \big(X, \Omega_X^{n-j} \otimes L ((k - j +2)Z - \lceil D \rceil )\big) = 0$ for all $1 \le j \le k$.
\medskip

\item If $k \ge n+1$, then $Z$ must be smooth by Corollary \ref{cor_singular_nontrivial}, 
and so $I_k (D) = \shO_X (Z - \lceil D \rceil)$ by Corollary~\ref{smooth_case}.
 In this case, if $L$ is a line bundle such that $L ((k+1) Z -  \lceil D \rceil)$ is ample, then 
$$H^i \big(X, \omega_X \otimes L ((k+1)Z) \otimes I_k (D) \big) = 0 \,\,\,\,\,\, {\rm for ~all}\,\,\,\, i >0.$$

\medskip

\item If $U = X \smallsetminus Z$ is affine (e.g. if $D$ or $Z$ are ample), then (1) and (2) also hold with $L = M (- Z)$, assuming 
that $M(pZ - \lceil D \rceil)$ is ample for $1 \le p \le k$.\footnote{When $k \ge 1$, the condition of $U$ being affine is in fact
implied by the positivity condition, since $D + Z - \lceil D\rceil$ is then an ample divisor with support $Z$.}
\end{enumerate}
\end{theorem}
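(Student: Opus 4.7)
The plan is to deduce all three parts from Saito's vanishing theorem applied to the filtered $\Dmod_X$-module $(\Mmod_1, F)$ from \S\ref{global_setting}, which is a direct summand of a mixed Hodge module on $X$. Set $L' := L(Z) \otimes M^{-1}$; then $(L')^{\otimes \ell} \simeq \shO_X\bigl(\ell(L + Z - D)\bigr)$, and since $L + Z - D$ is ample, so is $L'$. Using the identification $F_k\Mmod_1 \simeq M \otimes I_k(D) \otimes \shO_X(kZ)$ from \S\ref{global_setting}, the target sheaf becomes
$$\omega_X \otimes L\bigl((k+1)Z\bigr) \otimes I_k(D) \simeq \omega_X \otimes F_k\Mmod_1 \otimes L',$$
which is precisely the top-degree term of the filtered de Rham complex $F_{k-n}\DR(\Mmod_1) \otimes L'$. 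Saito's vanishing theorem applied to each graded piece yields
$$\HH^i\bigl(X, \gr^F_p \DR(\Mmod_1) \otimes L'\bigr) = 0 \quad\text{for all } i > 0,\ p \in \ZZ.$$
Inducting on $p$ via the short exact sequences $F_{p-1}\DR \hookrightarrow F_p\DR \twoheadrightarrow \gr^F_p\DR$, starting from $F_{-n-1}\DR(\Mmod_1) = 0$, this upgrades to $\HH^q\bigl(X, F_{k-n}\DR(\Mmod_1) \otimes L'\bigr) = 0$ for all $q > 0$.

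To extract vanishing of the top term from the hypercohomology vanishing, I would use the first hypercohomology spectral sequence
$$E_1^{p,q} = H^q\bigl(X, \Omega_X^{n+p} \otimes F_{k+p}\Mmod_1 \otimes L'\bigr) \Rightarrow \HH^{p+q}\bigl(X, F_{k-n}\DR(\Mmod_1) \otimes L'\bigr),$$
whose nonzero terms lie in $-k \le p \le 0$. Differentials out of $E_r^{0,q}$ vanish automatically, so once the incoming differentials from $E_r^{-r,\, q+r-1}$ vanish at the $E_1$ page, the edge term $E_1^{0,q}$ equals $E_\infty^{0,q}$, which is a subquotient of the vanishing $\HH^q$. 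Under the reduced $(k-1)$-log-canonical hypothesis, $F_{k+p}\Mmod_1 = M \otimes \shO_X\bigl((k+p+1)Z - \lceil D\rceil\bigr)$ is a line bundle for $-k \le p \le -1$, so the relevant $E_1^{p,q}$ simplifies to
$$H^q\bigl(X, \Omega^{n+p}_X \otimes L\bigl((k+p+2)Z - \lceil D\rceil\bigr)\bigr).$$
The ampleness hypothesis on $L(sZ - \lceil D\rceil)$ for $2 \le s \le k+1$ combined with the Akizuki-Nakano vanishing theorem kills these $E_1$ terms for $q \ge 2$, while the additional Nakano-type hypothesis in (1) kills them at $q = 1$; this proves part (1).

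Part (2) follows from the same argument combined with Corollary~\ref{cor_singular_nontrivial}, which forces $Z$ to be smooth when $k - 1 \ge n$ and $I_{k-1}(D) = \shO_X(Z - \lceil D\rceil)$; then Corollary~\ref{smooth_case} gives the line bundle identifications $F_j\Mmod_1 = M\bigl((j+1)Z - \lceil D\rceil\bigr)$ for all $j$, and the simplified positivity assumption in (2) suffices to run the spectral sequence. For part (3), the affineness of $U$ supplies an alternative source of vanishing: Serre vanishing on $U$ combined with $R^q j_* = 0$ for $q > 0$ gives $\HH^q\bigl(X, F_{k-n}\DR(\Mmod_1)\bigr) = 0$ directly without needing $L'$ ample, permitting the choice $L = M(-Z)$ (so $L' = \shO_X$) and reducing the required ampleness to that of $M(pZ - \lceil D\rceil)$ for $1 \le p \le k$. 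The main technical obstacle is the case $i = 1$: the Akizuki-Nakano theorem does not kill the boundary terms $E_1^{-r, r}$, so one genuinely needs the Nakano-type hypotheses as input, and the spectral-sequence argument must be set up delicately to ensure all incoming differentials at $E_r^{0,1}$ actually vanish rather than merely being controlled.
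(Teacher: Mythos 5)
Your argument for part (1) is correct, and takes a route that differs modestly from the paper's: you prove hypercohomology vanishing for the \emph{filtered} complex $F_{k-n}\DR(\Mmod_1)\otimes L'$ directly, inducting from Saito's vanishing on graded pieces, and then extract $H^i$ of the top term via the stupid-filtration spectral sequence. The paper instead first reduces from $F_k\Mmod_1$ to $\gr^F_k\Mmod_1$ via a Kodaira-vanishing step, and runs the spectral sequence on the \emph{graded} complex $(\gr^F_{k-n}\DR(\Mmod_1)\otimes A)[-k]$, whose intermediate terms are supported on $Z$; the paper therefore has to chase a further long exact sequence before Nakano vanishing can be applied. Your version trades that Kodaira reduction for a filtration induction, and has the incidental advantage that every $E_1$ term you need to kill is already a twist of some $\Omega^p_X$ by a line bundle.

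Parts (2) and (3), however, do not go through as sketched. For (2), the same spectral-sequence argument cannot run: the terms $E_1^{-r,q+r-1}=H^{q+r-1}\bigl(X,\Omega_X^{n-r}\otimes L((k-r+2)Z-\lceil D\rceil)\bigr)$ for $r\ge 2$ involve line bundles about which (2) makes no ampleness assumption, since only $L((k+1)Z-\lceil D\rceil)$ (the $r=1$ case) is assumed ample, and (2) has no Nakano-type hypothesis to dispose of the $q=1$ terms, yet claims vanishing for all $i>0$. The paper proves (2) by a direct argument that bypasses the spectral sequence entirely: with $Z$ smooth and $I_k(D)=\shO_X(Z-\lceil D\rceil)$, the target sheaf is $\omega_X\otimes L((k+2)Z-\lceil D\rceil)$, and Kodaira vanishing applied to the two outer terms of the adjunction short exact sequence relating $\omega_X\otimes L((k+1)Z-\lceil D\rceil)$, $\omega_X\otimes L((k+2)Z-\lceil D\rceil)$, and $\omega_Z\otimes L((k+1)Z-\lceil D\rceil)\vert_Z$ gives the result. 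For (3), invoking Serre vanishing on $U$ together with $R^qj_*=0$ is not the right tool: $F_{k-n}\DR(\Mmod_1)$ is a complex of coherent $\shO_X$-modules whose differentials are not $\shO_X$-linear, and neither this complex nor its graded pieces is a pushforward from $U$, so quasi-coherent Serre vanishing does not apply. What the paper actually uses is (i) the strictness decomposition $H^i(X,\DR(\Mmod))\simeq\bigoplus_{q}\mathbf{H}^i\bigl(X,\gr^F_{-q}\DR(\Mmod)\bigr)$ available for direct images of Hodge modules, and (ii) Artin vanishing for the perverse-sheaf cohomology $H^i(U,j^*\DR(\Mmod))$ on the affine variety $U$; the strictness step, which transfers the constructible-sheaf vanishing back to the coherent setting, is absent from your sketch.
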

\begin{proof}
We use the notation in \S\ref{global_setting} and Remark \ref{old_ideal}. In particular, we consider the filtered left $\Dmod_X$-module 
$$\Mmod_1 = M \otimes_{\shO_X} \shO_X (*Z),$$
which we know is a direct summand in a filtered $\Dmod$-module underlying a  mixed Hodge module on $X$.
Its filtration satisfies
$$F_k \Mmod_1 \simeq M(-Z) \otimes \shO_X \big((k+2)Z - \lceil D \rceil\big) \otimes I_k^\prime (D).$$
Note also that since $L + Z - D$ is ample, there exists an ample line bundle $A$ on $X$ such that $L \simeq M(-Z) \otimes A$.

Let's prove (1), i.e. consider the case $k \le n$. 
The statement is equivalent to the vanishing of the cohomology groups 
$$H^i \big(X, \omega_X \otimes L ((k+2)Z - \lceil D \rceil)  \otimes I_k^\prime (D) \big) = 0$$
Since $I_{k-1}^\prime (D)  = \shO_X$, we have a short exact sequence
$$0 \longrightarrow \omega_X \otimes L ((k+1)Z - \lceil D \rceil) \longrightarrow 
\omega_X \otimes L ((k+2)Z - \lceil D \rceil)  \otimes I_k^\prime (D)
\longrightarrow$$ 
$$\longrightarrow \omega_X \otimes A \otimes \gr_k^F \Mmod_1 \longrightarrow 0.$$
By taking the corresponding long exact sequence in cohomology and using Kodaira vanishing, we see that the vanishing we are aiming for is equivalent to the same 
statement for 
$$H^i \big (X,  \omega_X \otimes A \otimes  \gr_k^F \Mmod_1 \big).$$ 

We now consider the complex 
$$C^{\bullet}  : =  \big( \gr_{-n+k}^F \DR(\Mmod_1) \otimes A \big) [-k].$$
Given the hypothesis on the ideals $I_p^\prime (D)$, this can be identified with a complex of the form
$$\big[\Omega_X^{n-k}  \otimes L (2Z- \lceil D \rceil) \longrightarrow  \Omega_X^{n-k+1} \otimes L \otimes \shO_Z(3Z- \lceil D \rceil)  \longrightarrow \cdots $$
$$\cdots \longrightarrow 
\Omega_X^{n-1} \otimes L \otimes \shO_Z\big((k+1)Z - \lceil D \rceil\big)  \longrightarrow \omega_X \otimes A \otimes 
\gr_k^F \Mmod_1  \big]$$
placed in degrees $0$ up to $k$. Saito's Vanishing theorem \cite[\S2.g]{Saito-MHM} gives
\begin{equation}\label{van2}
\mathbf{H}^j ( X, C^{\bullet}) = 0 \quad\text{for all}\quad j \ge k+1.
\end{equation}
We use the spectral sequence 
$$E_1^{p,q} = H^q  (X, C^p) \implies  \mathbf{H}^{p+q} (X, C^{\bullet}).$$
The vanishing statements we are interested in are for the terms $E^{k, i}_1$ with $i \ge 1$. 
We will in fact show that 
\begin{equation}\label{E-terms}
E^{k, i}_r = E^{k, i}_{r +1}, \,\,\,\,\,{\rm for ~all}\,\,\,\,\,\,r \ge 1.
\end{equation}
This implies that 
$$E^{k, i}_1 = E^{k, i}_{\infty} = 0,$$
where the vanishing follows from ($\ref{van2}$) since $i \ge 1$, and this gives our conclusion.

We are thus left with proving ($\ref{E-terms}$). Now on one hand we always have 
$E^{k+r, i -r +1}_r = 0$ because $C^{k+r}=0$. On the other hand, we will show that under our
hypothesis we have $E^{k-r, i +r -1}_1 = 0$, from which we infer that $E^{k-r, i +r -1}_r = 0$ as well, 
allowing us to conclude. To this end, note first that if $r > k$ this vanishing is clear, since the complex $C^\bullet$ starts 
in degree $0$.  If $k = r$, we have 
$$E^{0, i +k -1}_1 = H^{i + k -1} \big( X, \Omega_X^{n-k} \otimes L (2Z - \lceil D \rceil) \big).$$
If $i \ge 2$ this is $0$ by Nakano vanishing, while if $i =1$ it is $0$ because of our hypothesis.
Finally, if $k \ge r + 1$, we have
$$E^{k - r, i +r -1}_1 = H^{i + r -1} \big( X, \Omega_X^{n-r} \otimes L \otimes \shO_Z ((k-r+2)Z - \lceil D \rceil) \big),$$
which sits in an exact sequence
$$H^{i + r - 1} \big( X, \Omega_X^{n- r} \otimes L ((k- r +2) Z - \lceil D \rceil) \big) \longrightarrow E^{k-r, i- r +1}_1 \longrightarrow $$
$$\longrightarrow H^{i  +r} \big( X, \Omega_X^{n- r} \otimes L ((k- r +1 ) Z - \lceil D \rceil) \big).$$
We again have two cases:
\begin{enumerate}
\item If $i \ge 2$, we deduce that $E^{k-r, i +r -1}_1 = 0$ by Nakano vanishing. 
\item If $i = 1$, using Nakano vanishing we obtain a surjective morphism 
$$H^r \big( X, \Omega_X^{n-r} \otimes L ((k-r + 2)Z - \lceil D \rceil) \big) \longrightarrow E^{k-r, i + r -1}_1,$$
and if the extra hypothesis on the term on the left holds, then we draw the same conclusion as in (1).
\end{enumerate}

The same argument proves (3), once we replace Saito Vanishing ($\ref{van2}$) by the vanishing 
$$\mathbf{H}^i \bigl( X, \gr_k^F \DR(\Mmod_1) \bigr) = 0$$
for all $i > 0$ and all $k$,  which in turn is implied by the same statement for the $\Dmod_X$-module $\Mmod$ underlying a Hodge $\Dmod$-module, in which $\Mmod_1$ is a direct summand. Furthermore, this is implied by the vanishing of the perverse sheaf cohomology
$$H^i \big(X, \DR(\Mmod) \bigr) = 0 \,\,\,\,\,\, {\rm for~all~} \,\,\,\,i > 0.$$
Indeed, by the strictness property for direct images (see e.g. \cite[Example~4.2]{MP1}), for $(\Mmod, F)$ we have the decomposition
$$H^i  \big(X, \DR(\Mmod) \bigr)\simeq \bigoplus_{q \in \ZZ}  \mathbf{H}^{i}  
\big(X, \gr_{-q}^F \DR ( \Mmod) \big).$$
Recall now from \S\ref{global_setting} that $\Mmod \simeq j_+ \Nmod$, where $\Nmod$ underlies a Hodge $\Dmod$-module on $U$, and $j \colon U \hookrightarrow X$ is the inclusion. Denoting $P = \DR(\Mmod)$, we then have 
$P \simeq j_* j^* P$, and so it suffices to show that 
$$H^i (U, j^* P ) = 0 \,\,\,\,\,\, {\rm for~all~} \,\,\,\,i > 0.$$
But this is a consequence of Artin vanishing (see e.g. \cite[Corollary~5.2.18]{Dimca}), since $U$ is affine.

Finally, the assertion in (2) follows from Kodaira vanishing, using the long exact sequence in cohomology associated to
the short exact sequence
$$0\to\omega_X\otimes L\big((k+1)Z-\lceil D\rceil\big)\to \omega_X\otimes L\big((k+2)Z-\lceil D\rceil\big)\to
\omega_Z\otimes L\big((k+1)Z-\lceil D\rceil\big)\vert_Z\to 0.$$

\end{proof}

\begin{remark}
We expect the statement of the theorem to hold even without assuming the existence of $M$ (i.e. of an $\ell$-th root of the 
line bundle $\shO_X(\ell D)$). This is known for $k =0$, when the statement follows from Nadel Vanishing, see \cite[Theorem
9.4.8]{Lazarsfeld}. However, at the moment we do not know how to show this for $k \ge 1$.
\end{remark}

\begin{remark}[{\bf Toric varieties}]
As in \cite[Corollary 25.1]{MP1}, when $X$ is a toric variety the Nakano-type vanishing requirement in 
Theorem \ref{vanishing_Hodge_ideals}(1) is automatically satisfied thanks to the Bott-Danilov-Steenbrink vanishing 
theorem. A stronger result in this setting is proved in \cite{Dutta}.
\end{remark}

\begin{remark}[{\bf Projective space, abelian varieties}]
As in \cite[Theorem 25.3 and 28.2]{MP1}, appropriate statements on $\PP^n$ and abelian varieties work without the extra assumptions of reduced log canonicity and Nakano-type vanishing in Theorem \ref{vanishing_Hodge_ideals}. More precisely, keeping the notation at the beginning of the section, we have:

\begin{variant}
Let $D$ be an effective $\QQ$-divisor on $\PP^n$ which is numerically equivalent to a hypersurface of degree $d \ge 1$. 
If $\ell \ge d - n -1$, then 
$$H^i \big(\PP^n, \shO_{\PP^n} (\ell) \otimes \shO_{\PP^n} (kZ) \otimes I_k (D) \big) = 0 \,\,\,\,{\rm for ~all} \,\,\,\, i > 0.$$
\end{variant}

Note that the positivity condition in Theorem \ref{vanishing_Hodge_ideals} is satisfied, since for every effective $\QQ$-divisor 
$D\neq 0$ in $\PP^n$ we have ${\rm deg} \lceil D \rceil <  \deg D + \deg Z$.

\begin{variant}
If $X$ is an abelian variety and $D$ is an ample $\QQ$-divisor on $X$, then 
$$H^i (X, M (kZ) \otimes I_k (D) \otimes \alpha) = 0$$
for all $i > 0$ and $\alpha \in \Pic^0 (X)$.
\end{variant}

Note that on an abelian variety every effective $\QQ$-divisor is nef, and the ampleness of $D$ is equivalent to that of any divisor 
whose support is equal to that of $D$.

The proofs are completely similar to those in \emph{loc. cit.}, replacing $\shO_X (*D)$ in the reduced case by $\Mmod_1$ in the proof above, and noting that since $\Mmod_1$ is a filtered direct summand in $j_+ p_+ \shO_V$ as in \S\ref{global_setting}, the vanishing properties we use continue to hold.
\end{remark}

\section{Restriction, subadditivity, and semicontinuity theorems}\label{section_restriction}

In this part of the paper we provide $\QQ$-divisor analogues of the results in \cite{MP2}. This extends well-known 
statements  in the setting of multiplier ideals; further discussion and references regarding these can be found in \emph{loc. cit.}

\subsection{Restriction theorem}\label{scn:restriction}
We begin with the $\QQ$-divisor version of the Restriction Theorem:

\begin{theorem}\label{restriction}
Let $D$ be an effective ${\mathbf Q}$-divisor, with support $Z$, on the smooth variety $X$, and
let $Y$ be a smooth irreducible divisor on $X$ such that $Y\not\subseteq Z$. If we denote $D_Y=D\vert_Y$, $Z_Y=Z\vert_Y$, and 
$Z'_Y=(Z_Y)_{\rm red}$, then for every $k\geq 0$ we have
\begin{equation}\label{eq1_restriction}
\shO_Y\big(-k(Z_Y-Z'_Y)\big)\cdot I_k(D_Y)\subseteq I_k(D)\cdot\shO_Y.
\end{equation}
In particular, if $Z_Y$ is reduced, then for every $k\geq 0$ we have
\begin{equation}\label{eq2_restriction}
I_k (D_{Y}) \subseteq I_k(D) \cdot \shO_Y.
\end{equation}
Moreover, if $Y$ is sufficiently general (e.g. a general member of a basepoint-free linear system), then 
we have equality in (\ref{eq2_restriction}).
\end{theorem}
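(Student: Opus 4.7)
The plan is to follow the strategy of \cite{MP2} for reduced divisors, suitably adapted via the birational description in Theorem~\ref{formula_log_resolution}. First I would reduce to the local setting $D = \alpha \cdot \mathrm{div}(h)$ with $\alpha \in \QQ_{>0}$, and choose a log resolution $f\colon W \to X$ of the pair $(X, D + Y)$ that is an isomorphism over $X\smallsetminus Z$, such that the strict transform $\widetilde Y$ of $Y$ is smooth and meets the reduced exceptional/strict transform divisor $E = (f^*D)_{\mathrm{red}}$ transversally. By a standard argument (blowing up further along intersections with $\widetilde Y$ if necessary), one can arrange that the induced morphism $\bar f := f|_{\widetilde Y}\colon \widetilde Y \to Y$ is itself a log resolution of $(Y, D_Y)$ which is an isomorphism over $Y\smallsetminus Z'_Y$, and that $(f^*D)|_{\widetilde Y} = \bar f^*(D_Y)$ as divisors. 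Set $g = h\circ f$ and $\bar g = g|_{\widetilde Y}$; then $\mathcal{M}(g^{-\alpha})|_{\widetilde Y} \simeq \mathcal{M}(\bar g^{-\alpha})$ compatibly with filtrations, by Remark~\ref{rem_behavior_etale}.

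Next I would exploit Theorem~\ref{formula_log_resolution} on both $X$ and $Y$: it identifies $I_k(D)$ (resp.\ $I_k(D_Y)$) with $R^0 f_*$ (resp.\ $R^0\bar f_*$) of the $(k-n)$-th filtered piece of $C^\bullet_{g^{-\alpha}}(-\lceil f^*D\rceil) \otimes_{\Dmod_W} \Dmod_{W\to X}$, and analogously on $\widetilde Y$. The restriction $\widetilde Y \hookrightarrow W$ combined with the projection formula induces a natural morphism of filtered complexes
\[
F_{k-n}\bigl(C^\bullet_{g^{-\alpha}}(-\lceil f^*D\rceil)\otimes_{\Dmod_W}\Dmod_{W\to X}\bigr)\otimes \shO_{\widetilde Y}
\;\longrightarrow\; F_{k-n}\bigl(C^\bullet_{\bar g^{-\alpha}}(-\lceil \bar f^*D_Y\rceil)\otimes_{\Dmod_{\widetilde Y}}\Dmod_{\widetilde Y\to Y}\bigr),
\]
after correcting by the conormal twist $\shO_{\widetilde Y}(-\widetilde Y)$ on the left. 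Taking $\bar f_*$ and using cohomology-and-base-change for the square formed by $f,\bar f$ and the inclusions, this produces a map $I_k(D)\cdot\shO_Y \otimes (\text{twist}) \to I_k(D_Y)$ of ideal sheaves on $Y$; comparing the ceiling conventions $\lceil f^*D\rceil|_{\widetilde Y}$ versus $\lceil \bar f^*D_Y\rceil$ produces precisely the twist $\shO_Y(k(Z_Y - Z'_Y))$ and yields the inclusion \eqref{eq1_restriction}. When $Z_Y$ is reduced, $Z_Y = Z'_Y$ and we obtain \eqref{eq2_restriction}.

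For the equality when $Y$ is sufficiently general (a general member of a basepoint-free linear system), I would argue that a Bertini-type construction allows us to choose $f$ so that $\widetilde Y$ avoids all the log-resolution loci where the restriction map above fails to be surjective. More precisely, for general $Y$ we may assume $Z_Y$ is reduced and that $\widetilde Y$ pulls back to a general member of a basepoint-free linear system on $W$, so that the Koszul-type sequence
\[
0 \to C^\bullet_{g^{-\alpha}}(-\lceil f^*D\rceil - \widetilde Y) \to C^\bullet_{g^{-\alpha}}(-\lceil f^*D\rceil) \to C^\bullet_{\bar g^{-\alpha}}(-\lceil \bar f^*D_Y\rceil) \to 0
\]
remains exact after taking $R^0 f_*$ of the appropriate filtered pieces; the vanishing of the relevant $R^1 f_*$ of the leftmost term follows from Corollary~\ref{cor_vanishing} combined with the strictness of the Hodge filtration on $f_+\Mmod(g^{-\alpha})$ (since this module is a direct summand of one underlying a mixed Hodge module). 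This forces equality in \eqref{eq2_restriction}.

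The main obstacle will be ensuring that the restriction map of complexes is compatible with the Hodge filtrations on the nose, i.e.\ that no unexpected shift arises when passing through the ceiling operation and the identification $(f^*D)|_{\widetilde Y} = \bar f^*D_Y$. Concretely, one must verify that $\lceil f^*D\rceil|_{\widetilde Y} - \lceil \bar f^*D_Y\rceil$ is effective, and that the residue map $\Omega^{\bullet}_W(\log E)|_{\widetilde Y}\to \Omega^{\bullet}_{\widetilde Y}(\log E|_{\widetilde Y})$ intertwines the twisted differentials involving $\alpha\cdot \mathrm{dlog}(g)$ correctly; this is where the transversality of $\widetilde Y$ with $E$ becomes essential, and where the discrepancy $k(Z_Y - Z'_Y)$ in \eqref{eq1_restriction} enters.
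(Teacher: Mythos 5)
Your approach differs fundamentally from the paper's, which does not pass through the complex $C^\bullet_{g^{-\alpha}}(-\lceil G\rceil)$ at all but instead uses the $V$-filtration along $Y$ and Saito's base-change formula $i^!(j\circ p)_+\QQ^H_V[n]\simeq (j'\circ p')_+{i''}^!\QQ^H_V[n]$, identifying $({\mathcal H}^1 i^!\Mmod_r(h^{-\alpha}), F_\bullet)$ with $(\Mmod_r(h|_Y^{-\alpha}), F_{\bullet+1})$ and producing the map $\eta\colon F_k\gr^V_{-1}\Mmod\to F_k\Mmod\otimes\shO_Y$. The crucial structural point there is that $F_k\gr^V_{-1}\Mmod$ is a \emph{subquotient} of $F_k\Mmod$, admitting a tautological lift-and-reduce map to $F_k\Mmod/tF_k\Mmod$; this is what produces a morphism \emph{from} $I_k(D_Y)$ (twisted) \emph{to} $I_k(D)\cdot\shO_Y$, yielding the inclusion after checking it is the identity on the open complement of $Z$. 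Your proposed restriction of the complex on $W$ to $\widetilde Y$, followed by $\bar f_*$, naturally goes in the opposite direction: a quotient map of complexes, combined with cohomology-and-base-change $i^*Rf_*\to R\bar f_*\iota^*$, produces a morphism out of $I_k(D)\cdot\shO_Y$, not into it, so on its face it proves (at best, and only when the relevant $R^1$ vanishes) the reverse inclusion $I_k(D)\cdot\shO_Y\subseteq I_k(D_Y)$. This directional mismatch is a genuine gap, not a bookkeeping issue.

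There are two further problems. First, the short exact sequence you write down is not one: $\bigl(\Omega_W^p(\log E)\otimes\shO_{\widetilde Y}\bigr)$ has rank $\binom{n}{p}$ while $\Omega^p_{\widetilde Y}(\log E|_{\widetilde Y})$ has rank $\binom{n-1}{p}$, since $\widetilde Y$ is transverse to $E$ and is not itself a component of $E$; the natural restriction map is a surjection with nontrivial kernel (the conormal piece), so $C^\bullet_{g^{-\alpha}}(-\lceil f^*D\rceil)$ restricted to $\widetilde Y$ is not the complex $C^\bullet_{\bar g^{-\alpha}}(-\lceil \bar f^*D_Y\rceil)$, and there is also a mismatch in the filtration indexing ($F_{k-n}$ versus $F_{k-(n-1)}$). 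Second, your explanation of the discrepancy term $\shO_Y(-k(Z_Y-Z'_Y))$ via ceiling comparison cannot be right: when $\widetilde Y$ meets each component of $E$ transversally, one has $\lceil f^*D\rceil|_{\widetilde Y}=\lceil (f^*D)|_{\widetilde Y}\rceil=\lceil \bar f^*D_Y\rceil$, so the ceilings contribute nothing. The twist actually arises because $I_k(D)$ is normalized against the reduced support $Z$ on $X$ while $I_k(D_Y)$ is normalized against $Z'_Y=(Z|_Y)_{\rm red}$ on $Y$, and $Z|_Y=Z_Y$ need not be reduced; this comes out of the definition $F_k\Mmod(h^\beta)=I_k(D)\otimes\shO_X(kZ+H)h^\beta$ once one compares the $\omega$-twists on the two sides, exactly as in the paper's final bookkeeping with $\omega_Y^{-1}(-kZ_Y-{\rm div}(h|_Y))$.
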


\begin{remark}
Note that when $D$ is a reduced divisor we have $D_Y = Z_Y$, and $D_Y - Z'_Y$ is an integral divisor with support in $Z'_Y$. Therefore Lemma \ref{periodicity} gives
$$I_k (D_Y) = \shO_X\big(-(D_Y-Z'_Y)\big)\cdot I_k(Z'_Y),$$
hence the statement in the theorem coincides with that of \cite[Theorem~A]{MP2}.
\end{remark}

\begin{proof}[Proof of Theorem~\ref{restriction}]
The argument follows the proof of \cite[Theorem~A]{MP2}, with a simplification observed in \cite{Saito-MLCT},
hence we only give the outline of the proof.
Since the statement is local, we may assume that $D=\alpha\cdot {\rm div}(h)$
for some nonzero $h\in\shO_X(X)$.
Consider the following commutative diagram with Cartesian squares:
$$
\begin{tikzcd}
V_Y \rar{i''} \dar{p'} & V\dar{p} \\
U_Y \rar{i'} \dar{j'} & U \dar{j} \\
Y \rar{i} & X,
\end{tikzcd}
$$
where $p$ and $j$ are as in diagram (\ref{cart_diag1}), while $i$ is the inclusion of $Y$ in $X$.
Note that if $n=\dim(X)$, we have a canonical base-change isomorphism
$$ i^! (j\circ p)_+{\mathbf Q}_V^H[n] \simeq (j' \circ p')_+  {i^{''}}^!{\mathbf Q}_V^H[n]$$
proved in \cite[4.4.3]{Saito-MHM}. We also have a canonical isomorphism 
$${i^{''}}^! \QQ_V^H [n] = (\QQ_{V_Y}^H [n-1]) (-1)[-1]$$ 
(see for instance 
\cite[\S3.5]{Saito-MHP}).
Here we use the Tate twist notation, which for a mixed Hodge module $M = (\Mmod, F_\bullet \Mmod, K)$ is given by 
$$M (k)= \big(\Mmod, F_{\bullet - k} \Mmod, K \otimes_{\QQ} \QQ(k)\big).$$
We obtain, in particular, an isomorphism of filtered right $\Dmod_X$-modules
$$\big({\mathcal H}^1i^!\Mmod_r(h^{-\alpha}),F_{\bullet}\big)\simeq\big(\Mmod_r(h\vert_Y^{-\alpha}),F_{\bullet+1}\big).$$

Recall now that if $(V_{\alpha}\Mmod)_{\alpha\in\QQ}$ is the $V$-filtration on $\Mmod=\Mmod_r(h^{-\alpha})$ corresponding to the
smooth hypersurface $Y\subseteq X$, then there is a canonical morphism
$$\sigma\colon {\rm gr}^V_0\Mmod\to {\rm gr}^V_{-1}\Mmod\otimes_{\shO_X}\shO_X(Y)$$
such that 
$${\mathcal H}^1i^!\Mmod\simeq {\rm coker}(\sigma),$$
with the Hodge filtration on the right-hand side induced by the Hodge filtration on $\Mmod$.
We refer to \cite[\S2]{MP2} for details. 

One defines a morphism 
$$\eta\colon F_k{\rm gr}^V_{-1}\Mmod=\frac{F_kV_{-1}\Mmod}{F_kV_{<-1}\Mmod}\longrightarrow F_k\Mmod\otimes_{\shO_X}\shO_Y$$
that maps the class of $u\in F_kV_{-1}\Mmod=F_k\Mmod\cap V_{-1}\Mmod$ to the class of $u$ in $F_k\Mmod\otimes_{\shO_X}\shO_Y$.
After tensoring $\eta$ with $\shO_X(Y)$, the resulting morphism vanishes on the image of the restriction of $\sigma$ to 
$F_k{\rm gr}^V_0\Mmod$, hence we obtain an induced morphism 
\begin{equation}\label{eq2_restriction_thm}
F_{k+1}\Mmod_r(h\vert_Y^{-\alpha})\simeq F_k{\mathcal H}^1i^!\Mmod\simeq F_k{\rm coker}(\sigma)\to F_k\Mmod\otimes_{\shO_X}\shO_Y(Y).
\end{equation}
Applying this with $k$ replaced by $k-n$, it follows from the definition of Hodge ideals and the formula for the equivalence between
left and right $\Dmod$-modules that we have a morphism
$$I_k(D_Y)\otimes_{\shO_Y}\omega_Y\big(kZ'_Y+{\rm div}(h\vert_Y)\big)\to
I_k(D)\otimes_{\shO_X}\omega_X\big(kZ+{\rm div}(h)\big)\otimes_{\shO_X}\shO_Y(Y).$$
By tensoring this with $\omega_Y^{-1}\big(-kZ_Y-{\rm div}(h\vert_Y)\big)$
and composing with the canonical map $I_k(D)\otimes_{\shO_X}\shO_Y\to I_k(D)\cdot\shO_Y$, we obtain a canonical
morphism
$$
\varphi\colon \shO_Y\big(-k(Z_Y-Z'_Y)\big)\otimes I_k(D_Y)\to I_k(D)\cdot\shO_Y.
$$
Note that all constructions are compatible with restrictions to open subsets and when restricting to $Z=X\smallsetminus U$,
the above morphism can be identified with the identity map on $\shO_Y$. Therefore the morphism $\varphi$ is compatible with the
two inclusions in $\shO_Y$, and we deduce the inclusion in (\ref{eq1_restriction}).

Suppose now that $Y$ is general, so that $Z_Y=Z'_Y$ and $Y$ is non-characteristic with respect to $\Mmod$.
For example, this condition holds if $Y$ is transversal to the strata in a Whitney stratification of $Z$ (see 
\cite[\S2]{Dimca_et_al}); in particular, it holds if $Y$ is a general member of a basepoint-free linear system.
We may assume that $Y$ is defined by a global equation $t\in\shO_X(X)$.
In this case, it follows from \cite[Lemme~3.5.6]{Saito-MHP} that ${\rm gr}^V_0\Mmod=0$ and ${\rm gr}_{-1}^V\Mmod=
\Mmod\otimes_{\shO_X}\shO_Y$. It is now straightforward to check that the morphism (\ref{eq2_restriction_thm})
is an isomorphism, hence $\varphi$ is an isomorphism, and we thus have equality in (\ref{eq2_restriction}).
\end{proof}

We deduce the following analogue of inversion of adjunction:

\begin{corollary}\label{cor_restriction}
With the notation of Theorem \ref{restriction}, if $Z_Y$ is reduced and  $I_k (D_Y)_x = \shO_{Y,x}$
for some $x \in Y$,  then $I_k (D)_x = \shO_{X,x}$. 
\end{corollary}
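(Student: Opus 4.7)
The plan is to deduce the corollary directly from the inclusion $I_k(D_Y) \subseteq I_k(D)\cdot \shO_Y$ in Theorem~\ref{restriction} (which applies since $Z_Y$ is assumed reduced), together with a one-line application of Nakayama's lemma. Because there are no further technicalities, the main task is simply to unwind what the containment $I_k(D)_x + I_{Y,x}\subseteq \shO_{X,x}$ says after localizing, and this is where the hypothesis that $x\in Y$ is used in an essential way.

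First I would localize the inclusion at $x$: the assumption $I_k(D_Y)_x=\shO_{Y,x}$ forces $\big(I_k(D)\cdot \shO_Y\big)_x = \shO_{Y,x}$. If $t\in\shO_{X,x}$ is a local equation of $Y$ at $x$, this translates to
\[
I_k(D)_x + t\,\shO_{X,x} = \shO_{X,x}.
\]
Next I would choose $a\in I_k(D)_x$ and $b\in \shO_{X,x}$ with $a+tb=1$. Since $x\in Y$, the element $t$ lies in the maximal ideal $\mathfrak{m}_x$, so $tb\in\mathfrak{m}_x$, and hence $a=1-tb$ is a unit in the local ring $\shO_{X,x}$. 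Therefore the ideal $I_k(D)_x$ contains a unit, which forces $I_k(D)_x=\shO_{X,x}$.

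The only potential obstacle worth flagging is making sure the correct form of the restriction inclusion is applied: Theorem~\ref{restriction} only gives the clean inclusion (\ref{eq2_restriction}) when $Z_Y$ is reduced, which is exactly the hypothesis we have. If $Z_Y$ were not reduced, one would have to carry around the extra twist by $\shO_Y\!\big(-k(Z_Y-Z'_Y)\big)$ from (\ref{eq1_restriction}), which would obstruct the Nakayama step because the localized inclusion would no longer say that $I_k(D)_x + t\,\shO_{X,x}$ surjects onto $\shO_{Y,x}$. With the reducedness assumption in place, however, the argument is essentially immediate.
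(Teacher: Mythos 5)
Your proof is correct and is essentially the only way to prove this; indeed, the paper states the corollary without proof, treating it as an immediate consequence of Theorem \ref{restriction}, and your argument is exactly that deduction spelled out. One small quibble: you label the final step as an application of Nakayama's lemma, but the argument you actually run (writing $a+tb=1$ with $a\in I_k(D)_x$, noting $tb\in\mathfrak{m}_x$, hence $a$ is a unit) is the elementary fact that in a local ring an ideal not contained in the maximal ideal is the unit ideal; no Nakayama is needed. Either way the reasoning is sound.
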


\begin{remark}\label{restriction_higher_codimension}
If $D$ is an effective ${\mathbf Q}$-divisor, with support $Z$, on the smooth variety $X$, and
$Y$ is a smooth subvariety of $X$ such that $Y\not\subseteq Z$ and $Z\vert_Y$ is reduced, then
for every $k\geq 0$ we have
$$I_k(D\vert_Y)\subseteq I_k(D)\cdot\shO_Y.$$
This follows by writing $Y$ locally as a transverse intersection of $r$ smooth divisors on $X$
and applying repeatedly the inclusion (\ref{eq2_restriction}). 
\end{remark}

\begin{remark}
With the notation in Theorem~\ref{restriction}, let $Y_1,\ldots,Y_r$ be general elements in a basepoint-free linear 
system on $X$, where $r\leq n=\dim(X)$.  If $W=Y_1\cap\cdots\cap Y_r$, then for every $k\geq 0$ we have
$$I_k(D\vert_W)=I_k(D)\cdot\shO_W.$$
Indeed, if $W_i=Y_1\cap\cdots\cap Y_i$, and if $(S_{\beta})_{\beta}$ are the strata of a Whitney stratification of $Z$, then it follows
by induction on $i$ that we have a Whitney stratification of $Z\vert_{W_i}$ with strata $(S_{\beta}\cap W_i)_{\beta}$. 
Moreover, $Y_{i+1}$ is transversal to each such stratum. We may thus apply the theorem to each divisor $D\vert_{W_i}$ and smooth hypersurface $Y_{i+1}\cap W_i\subseteq W_i$, to conclude that
$$I_k(D\vert_W)=I_k(D)\cdot\shO_W.$$
\end{remark}

\subsection{Semicontinuity theorem}\label{scn:semicontinuity}

The same argument as in \cite[\S5]{MP2}, based on the Restriction Theorem (in this case Theorem \ref{restriction} above), gives the following semicontinuity statement.
The set-up is as follows:  let $f\colon X\to T$ be a smooth morphism of relative dimension $n$
between arbitrary varieties $X$ and $T$, and $s\colon T\to X$ a morphism such that $f\circ s={\rm id}_T$.
Let $D$ be an effective $\QQ$-Cartier $\QQ$-divisor on $X$, relative over $T$ (that is, we can write $D$ locally
as $\alpha H$, for an effective divisor $H$ and a positive rational number $\alpha$, with $H$ flat over $T$). We assume that we have an effective
divisor $Z$ on $X$, relative over $T$, with ${\rm Supp}(Z)={\rm Supp}(D)$, and such that for every $t\in T$, the
restriction $Z_t$ to the fiber $X_t=f^{-1}(t)$ is reduced. 
For every $x\in X$, we denote by $\mathfrak{m}_x$ the ideal defining $x$ in $X_{f(x)}$.

\begin{theorem}\label{semicontinuity}
With the above notation,  for every $q\geq 1$, the set
$$V_q:=\big\{t\in T\mid I_k(D_t)\not\subseteq \mathfrak{m}_{s(t)}^q\big\},$$
is open in $T$. 
\end{theorem}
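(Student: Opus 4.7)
I would follow the strategy of \cite[\S5]{MP2}: establish a fiberwise equality of Hodge ideals via the Restriction Theorem, and then deduce openness via upper semicontinuity for a coherent sheaf on $T$. The statement is Zariski-local on $T$, so I would first reduce to $T$ affine, and by localizing on $X$ near a point of $s(T)$ also assume $D=\alpha\cdot\operatorname{div}(h)$ for some $\alpha\in\QQ_{>0}$ and $h\in\shO_X(X)$, so that the $\Dmod_X$-module $\Mmod(h^{-\alpha})$ of \S\ref{coverings} is defined globally.

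The heart of the argument is the identity
$$I_k(D_t)=I_k(D)\cdot\shO_{X_t}\qquad\text{for every }t\in T.$$
To establish it, I would choose a Whitney stratification $\{S_\beta\}$ of $Z$ such that every $f\vert_{S_\beta}\colon S_\beta\to T$ is smooth (obtained by refining any initial Whitney stratification via generic smoothness of $f\vert_{S_\beta}$ together with the smoothness of $f$). At any $p\in S_\beta\cap X_t$, the smoothness of $f\vert_{S_\beta}$ at $p$ makes $df_p$ surjective on $T_p S_\beta$, and since $T_pX_t=\ker(df_p)$ we conclude that $T_pS_\beta+T_pX_t=T_pX$. Hence $X_t$ is transverse to every Whitney stratum of $Z$, and therefore non-characteristic with respect to $\Mmod(h^{-\alpha})$. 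Since $Z_t$ is reduced by hypothesis, the non-characteristic case in the proof of Theorem~\ref{restriction} applies and yields the claim when $\dim T=1$; for general $T$ one iterates, writing $X_t$ locally as the intersection of $\dim T$ smooth hypersurfaces cut out by $f$-pullbacks of coordinates on $T$, each non-characteristic at the corresponding stage by the same Whitney argument.

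With the fiberwise equality in hand, let $\mathcal{I}_s\subseteq\shO_X$ be the ideal sheaf of $s(T)$ and set $\mathcal{F}:=\shO_X/(I_k(D)+\mathcal{I}_s^q)$. Since $\mathcal{I}_s^q$ annihilates $\mathcal{F}$, its support lies in the $q$-th infinitesimal thickening of $s(T)$, so $f\vert_{\operatorname{Supp}(\mathcal{F})}$ is finite and $f_*\mathcal{F}$ is a coherent $\shO_T$-module. Its fiber over $t$ has $\CC$-dimension
$$\dim_{\CC}\shO_{X_t,s(t)}/\bigl(I_k(D)\cdot\shO_{X_t}+\mathfrak{m}_{s(t)}^q\bigr)=\dim_{\CC}\shO_{X_t,s(t)}/\bigl(I_k(D_t)+\mathfrak{m}_{s(t)}^q\bigr),$$
using $\mathcal{I}_s^q\cdot\shO_{X_t}=\mathfrak{m}_{s(t)}^q$ (since $\{s(t)\}\subset X_t$ is cut out by the regular sequence obtained by restricting generators of $\mathcal{I}_s$) together with the fiberwise equality of the previous paragraph. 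This dimension is bounded above by $\binom{n+q-1}{q-1}$, with equality exactly when $I_k(D_t)\subseteq\mathfrak{m}_{s(t)}^q$, i.e.\ when $t\in T\smallsetminus V_q$. Upper semicontinuity of fiber dimensions of $f_*\mathcal{F}$ then shows that $T\smallsetminus V_q$ is closed, so $V_q$ is open.

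The main obstacle is securing the fiberwise equality $I_k(D_t)=I_k(D)\cdot\shO_{X_t}$ for every $t$, and not merely generic $t$: the plain inclusion supplied by Theorem~\ref{restriction} is not sufficient, because without equality an open condition on $I_k(D)\cdot\shO_{X_t}$ cannot be transferred to one on $I_k(D_t)$. Producing a Whitney stratification of $Z$ whose strata are all smooth over $T$, so that the non-characteristic case of the Restriction Theorem applies uniformly in $t$, is thus the crucial technical point.
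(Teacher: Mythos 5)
Your upper-semicontinuity device---pushing forward $\shO_X/(I_k(D)+\mathcal{I}_s^q)$ to $T$ and bounding fiber lengths by $\binom{n+q-1}{n}$---is the right ingredient, and it is indeed what the argument in \cite[\S5]{MP2} rests on. But the claim you flag as ``the crucial technical point,'' namely that one can refine a Whitney stratification of $Z$ so that every stratum maps smoothly onto $T$, and hence every fiber $X_t$ is non-characteristic, is false. A stratum of dimension $<\dim T$ can never submerge onto $T$; and even when all strata have large enough dimension, $f$ restricted to a stratum need not be a submersion everywhere. A concrete example: take $X=\AAA^2\times\AAA^1\to T=\AAA^1$, the projection onto the last factor, $h=x^2+y^3-t$, and $Z=V(h)$. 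Every fiber $Z_t$ is reduced and $Z$ is smooth, so the only Whitney stratum is $Z$ itself; yet at the origin $dh=-dt$, so $T^*_ZX$ coincides with $T^*_{X_0}X$ there, $f|_Z$ is not a submersion, and $X_0$ is not non-characteristic for $\Mmod(h^{-\alpha})$. The equality $I_k(D_0)=I_k(D)\cdot\shO_{X_0}$ is therefore not supplied by Theorem~\ref{restriction} in the way your argument needs.

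The point is that fiberwise equality for all $t$ is not needed. The argument in \cite[\S5]{MP2} combines three weaker facts: (a) the inclusion $I_k(D_t)\subseteq I_k(D)\cdot\shO_{X_t}$, valid for \emph{every} $t$ with $Z_t$ reduced, from Theorem~\ref{restriction}; (b) equality only for \emph{general} $t$ (the non-characteristic case); and (c) your upper-semicontinuity step for the total-space ideal. One reduces (via constructibility and Noetherian induction) to $T$ a smooth affine curve; if $\{t\mid I_k(D_t)\subseteq\mathfrak{m}_{s(t)}^q\}$ is dense in $T$, then by (b) we get $I_k(D)\cdot\shO_{X_t}\subseteq\mathfrak{m}_{s(t)}^q$ for general $t$, (c) propagates this to every $t$, and (a) converts it back into $I_k(D_t)\subseteq\mathfrak{m}_{s(t)}^q$ for every $t$. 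In other words, the special fibers never need to be non-characteristic: the argument routes the specialization through the total-space restriction $I_k(D)\cdot\shO_{X_t}$ rather than trying to identify it with $I_k(D_t)$ everywhere, which is exactly why the one-sided inclusion in Theorem~\ref{restriction} is enough.
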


\subsection{Subadditivity theorem}\label{scn:subadditivity}

The calculation for $I_2$ in Example \ref{parameter_dependence} shows that the inclusion 
$$I_k (D_1 + D_2) \subseteq I_k (D_1)$$ 
cannot hold for arbitrary $\QQ$-divisors $D_1$ and $D_2$. However, with an appropriate assumption on the support, we have the 
following stronger subadditivity statement: 

\begin{theorem}\label{subadditivity}
If $D_1$ and $D_2$ are effective $\QQ$-divisors on the smooth variety $X$, whose supports $Z_1$ and $Z_2$
satisfy the property that $Z_1+Z_2$ is reduced, then for every $k\geq 0$ 
we have
$$I_k(D_1+D_2)\subseteq\sum_{i+j=k}I_i(D_1)\cdot I_j(D_2)\cdot \shO_X(-jZ_1-iZ_2)\subseteq 
I_k(D_1)\cdot I_k(D_2).$$
\end{theorem}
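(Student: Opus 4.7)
The approach is the classical diagonal trick for subadditivity. Let $p_1,p_2\colon X\times X\to X$ be the projections and $\Delta\colon X\hookrightarrow X\times X$ the diagonal embedding, and set $D:=p_1^*D_1+p_2^*D_2$. Then $D|_\Delta=D_1+D_2$, and the support of $D$ on $X\times X$ is the reduced divisor $p_1^*Z_1+p_2^*Z_2$, whose restriction to $\Delta\simeq X$ equals $Z_1+Z_2$, again reduced by hypothesis. Remark~\ref{restriction_higher_codimension} (applied with $Y=\Delta$, realized locally as the transverse intersection of the $n$ smooth hypersurfaces $\{x_i-y_i=0\}$, along each of which $p_1^*Z_1+p_2^*Z_2$ continues to restrict to a reduced divisor since the two summands involve disjoint sets of coordinates) therefore yields
\[
I_k(D_1+D_2)\subseteq I_k(D)\cdot\shO_\Delta.
\]

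Next I would compute $I_k(D)$ on $X\times X$ explicitly. Working locally, write $D_i=\alpha_i\cdot\mathrm{div}(h_i)$ with a common denominator $\ell$, and let $j_i\colon U_i\hookrightarrow X$ and $p_i\colon V_i\to U_i$ be the open embeddings and étale cyclic covers from \S\ref{coverings}. The Künneth isomorphism for mixed Hodge modules gives
\[
(j_1\times j_2)_+(p_1\times p_2)_+\QQ^H_{V_1\times V_2}[2n]\simeq (j_1)_+(p_1)_+\QQ^H_{V_1}[n]\boxtimes (j_2)_+(p_2)_+\QQ^H_{V_2}[n],
\]
equivariantly for the natural $\mu_\ell\times\mu_\ell$-actions. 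Extracting the isotypic summand of character $(\ell\alpha_1,\ell\alpha_2)$ identifies the filtered $\Dmod_{X\times X}$-module attached to $D$ (defined via Remark~\ref{twist_individual}) with the external product of the corresponding filtered $\Dmod$-modules on the two factors, for which the Hodge filtration obeys
\[
F_k(\Mmod_1\boxtimes\Mmod_2)=\sum_{i+j=k}(F_i\Mmod_1)\boxtimes_{\shO}(F_j\Mmod_2).
\]
Translating via the defining relation $F_k\Mmod(h^\beta)=I_k(D)\otimes\shO_X(kZ+H)h^\beta$, and using $\lceil D\rceil=p_1^*\lceil D_1\rceil+p_2^*\lceil D_2\rceil$ (since the two summands have disjoint supports), a direct bookkeeping calculation yields the identity
\[
I_k(D)=\sum_{i+j=k}\bigl(I_i(D_1)\boxtimes I_j(D_2)\bigr)\cdot\shO_{X\times X}\bigl(-j\,p_1^*Z_1-i\,p_2^*Z_2\bigr).
\]
Restricting to the diagonal turns the external products of ideals into ordinary products in $\shO_X$ and turns $-j\,p_1^*Z_1-i\,p_2^*Z_2$ into $-jZ_1-iZ_2$; combined with the first displayed containment this delivers the first inclusion of the theorem.

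For the second inclusion I iterate Remark~\ref{rmk_inclusion_ideals}: from $\shO_X(-Z_1)\cdot I_{a-1}(D_1)\subseteq I_a(D_1)$ one deduces $\shO_X(-jZ_1)\cdot I_i(D_1)\subseteq I_{i+j}(D_1)=I_k(D_1)$, and symmetrically $\shO_X(-iZ_2)\cdot I_j(D_2)\subseteq I_k(D_2)$; multiplying these two containments gives
\[
I_i(D_1)\cdot I_j(D_2)\cdot\shO_X(-jZ_1-iZ_2)\subseteq I_k(D_1)\cdot I_k(D_2),
\]
and summing over $i+j=k$ completes the argument.

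The hardest step will be the Künneth identification together with its careful bookkeeping. One must verify that the filtered $\Dmod$-module attached to $D=p_1^*D_1+p_2^*D_2$ on $X\times X$ (which is not of the simple form $\alpha\cdot\mathrm{div}(h)$ for a single rational number $\alpha$) coincides, as the appropriate $\mu_\ell\times\mu_\ell$-isotypic summand, with the external product of the filtered $\Dmod$-modules on the two factors, and then rewrite the resulting formula so that the Hodge ideals $I_i(D_1)$ and $I_j(D_2)$ appear with the correct twist $\shO(-jZ_1-iZ_2)$, rather than the prime variants $I_i'(D_1)$ and $I_j'(D_2)$ that come directly out of the filtration.
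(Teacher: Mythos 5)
Your proof is correct and follows essentially the same route as the paper: reduce to the diagonal via Remark~\ref{restriction_higher_codimension} (whose hypothesis is exactly the reducedness of $Z_1+Z_2$), compute $I_k$ of the exterior sum on $X\times X$ from the external-product decomposition of the corresponding filtered $\Dmod$-module (this is Proposition~\ref{product_case}, which the paper establishes using the same K\"unneth-type isomorphism of mixed Hodge modules after normalizing to a common $\alpha$ via Remark~\ref{eq_renormalization_0} -- you instead keep $\alpha_1\neq\alpha_2$ and pick out the right $\mu_\ell\times\mu_\ell$-isotypic summand, which is equivalent), and deduce the second inclusion from $\shO_X(-Z)\cdot I_{a-1}(D)\subseteq I_a(D)$. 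The only inaccuracy is your parenthetical remark that $D=p_1^*D_1+p_2^*D_2$ is "not of the simple form $\alpha\cdot\mathrm{div}(h)$": after applying Remark~\ref{eq_renormalization_0} to bring $D_1$ and $D_2$ to a common $\alpha$, it in fact is, and this is how the paper reduces to $\Mmod_r((p_1^*h_1\cdot p_2^*h_2)^{-\alpha})$.
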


Note first that, for every $i$ and $j$, the inclusion 
$$F_i \Mmod (h^{- \alpha}) \subseteq F_{i+j} \Mmod (h^{- \alpha})$$
implies the inclusion 
\begin{equation}\label{easy_inclusion}
\shO_X (- jZ) \cdot I_i (D) \subseteq I_{i +j} (D).
\end{equation}
This gives the second inclusion in the statement above.
To prove the first inclusion, as in the proof of \cite[Theorem~B]{MP2} it is enough to show the following:\footnote{Indeed, the Restriction Theorem applies in the form given in 
Remark~\ref{restriction_higher_codimension} for the diagonal embedding $X\hookrightarrow X\times X$, since we are assuming that $Z_1 + Z_2$ is reduced.}

\begin{proposition}\label{product_case}
Let $X_1$ and $X_2$ be smooth varieties and let $D_i$ be effective $\QQ$-divisors on $X_i$, with support 
$Z_i$, for $i=1,2$. If $B_i=p_i^* D_i$,
where $p_i\colon X_1\times X_2\to X_i$ are the canonical projections, then for every $k\geq 0$ we have
$$I_k(B_1+B_2)=\sum_{i+j=k}\big(I_i(D_1)\shO_{X_1}(-jZ_1)\cdot\shO_{X_1\times X_2}\big)\cdot \big(I_j(D_2)\shO_{X_2}(-iZ_2)\cdot\shO_{X_1\times X_2}\big).$$
\end{proposition}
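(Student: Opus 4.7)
The plan is to identify the filtered $\Dmod$-module $\Mmod((h_1 \circ p_1)^{-\alpha_1}(h_2 \circ p_2)^{-\alpha_2})$ on $X_1 \times X_2$, whose Hodge filtration encodes $I_k(B_1+B_2)$, with the external tensor product of $\Mmod(h_i^{-\alpha_i})$ on the factors, and then to translate the convolution formula for filtrations into the stated formula for ideals.

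The statement is local on each factor, so I may assume that $D_i = \alpha_i \cdot \mathrm{div}(h_i)$ for nonzero $h_i \in \shO_{X_i}(X_i)$. Choose $\ell \in \ZZ_{>0}$ with $\ell \alpha_i \in \ZZ$ for $i = 1, 2$, denote $U_i = X_i \smallsetminus Z_i$, and let $q_i \colon V_i \to U_i$ and $j_i \colon U_i \hookrightarrow X_i$ be the maps of Lemma~\ref{lem_decomp1}, so that $V_i = \Spec\,\shO_{U_i}[y_i]/(y_i^\ell - h_i^{-\ell\alpha_i})$. Set $\sigma = q_1 \times q_2$ and $j = j_1 \times j_2$. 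The key input is the K\"unneth formula for mixed Hodge modules (see \cite{Saito-MHM}), which gives a canonical isomorphism of mixed Hodge modules on $X_1 \times X_2$:
\[
(j \circ \sigma)_+ \QQ_{V_1 \times V_2}^H[n_1+n_2] \simeq (j_1 \circ q_1)_+ \QQ_{V_1}^H[n_1] \boxtimes (j_2 \circ q_2)_+ \QQ_{V_2}^H[n_2],
\]
hence an identification of the underlying filtered $\Dmod$-modules. Decomposing both sides under the natural $\mu_\ell \times \mu_\ell$-action and invoking Lemma~\ref{lem_decomp1_v2}, the $(i_1, i_2)$-th isotypical component on the right is $\Mmod(h_1^{-i_1\alpha_1}) \boxtimes \Mmod(h_2^{-i_2\alpha_2})$, which as a $\Dmod$-module on $X_1 \times X_2$ is canonically $\Mmod((h_1 \circ p_1)^{-i_1\alpha_1}(h_2 \circ p_2)^{-i_2\alpha_2})$.

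Selecting the $(1,1)$-component, which corresponds to the divisor $B_1 + B_2$, and using that the Hodge filtration on an external product of filtered $\Dmod$-modules underlying mixed Hodge modules is the convolution filtration, I obtain
\[
F_k \Mmod\big((h_1 \circ p_1)^{-\alpha_1}(h_2 \circ p_2)^{-\alpha_2}\big) = \sum_{i+j=k} F_i \Mmod(h_1^{-\alpha_1}) \boxtimes_{\shO} F_j \Mmod(h_2^{-\alpha_2}).
\]
Now I would rewrite each factor $F_m \Mmod(h_r^{-\alpha_r})$ using Remark~\ref{old_ideal} as $I_m'(D_r) \otimes \shO_{X_r}\big((m+1)Z_r - \lceil D_r\rceil\big) h_r^{-\alpha_r}$, compare the resulting twisting line bundles on the two sides (noting that $\lceil B_1+B_2\rceil = \lceil \alpha_1\rceil p_1^*Z_1 + \lceil \alpha_2\rceil p_2^*Z_2$, since $p_1^*Z_1$ and $p_2^*Z_2$ share no components), and finally pass from the $I'$-version back to $I$ via Lemma~\ref{periodicity}. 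The claim then follows by a direct bookkeeping of line-bundle twists in which the shift $(k-i) = j$ produces precisely the factor $\shO(-jp_1^*Z_1 - ip_2^*Z_2)$.

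The main obstacle is justifying the filtered K\"unneth identification, i.e.\ that the external tensor product of filtered $\Dmod$-modules underlying mixed Hodge modules carries the convolution Hodge filtration and that the $\mu_\ell \times \mu_\ell$-isotypical decomposition is compatible with this filtration. Both are standard features of Saito's formalism, but they require careful attention to normalization conventions (in particular the shift between left and right $\Dmod$-module filtrations, and the behaviour of $\boxtimes$ under those conventions) before the final numerical bookkeeping can be carried through.
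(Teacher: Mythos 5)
Your proposal is correct and follows essentially the same approach as the paper: both reduce to the filtered K\"unneth isomorphism between $\Mmod_r\big((p_1^*h_1\cdot p_2^*h_2)^{-\alpha}\big)$ and $\Mmod_r(h_1^{-\alpha})\boxtimes\Mmod_r(h_2^{-\alpha})$, obtained from the isomorphism $j_+\sigma_+\QQ_{V_1\times V_2}^H\simeq {j_1}_+{q_1}_+\QQ_{V_1}^H\boxtimes{j_2}_+{q_2}_+\QQ_{V_2}^H$ together with the filtered $\mu_\ell\times\mu_\ell$-decomposition (Lemmas~\ref{decomp_filtration} and~\ref{lem_decomp1_v2}). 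The only cosmetic difference is that the paper first uses Remark~\ref{eq_renormalization_0} to normalize to a common exponent $\alpha$ for both factors and then outsources the numerical bookkeeping to \cite[Proposition~4.1]{MP2}, whereas you keep $\alpha_1\neq\alpha_2$; both are fine.
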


\begin{proof}
By Remark \ref{eq_renormalization_0}, we can assume that there exist regular functions $h_1$ on $X_1$ and 
$h_2$ on $X_2$, together with $\alpha \in \QQ_{> 0}$, such that $I_i (D_1)$ and $I_j (D_2)$ are defined by 
$\Mmod_r (h_1^{-\alpha})$ and $\Mmod_r (h_2^{-\alpha})$, respectively. The statement follows precisely as in \cite[Proposition~4.1]{MP2}, 
as long as we show that there is a canonical isomorphism of filtered $\Dmod$-modules
$$\big( \Mmod_r ( (p_1^*h_1 \cdot p_2^* h_2)^{-\alpha}), F\big) \simeq \big( \Mmod_r ( h_1^{-\alpha}) \boxtimes \Mmod_r ( h_2^{-\alpha}) , F\big),$$
where the filtration on the right hand side is the exterior product of the filtrations on the two factors.
But this is a consequence of the canonical isomorphism of mixed Hodge modules 
$$j_* p_* \QQ_{V_1 \times V_2}^H [n_1 + n_2] \simeq {j_1}_* {p_1}_* \QQ_{V_1}^H [n_1]  \boxtimes
{j_2}_* {p_2}_* \QQ_{V_2}^H  [n_2],$$
with the obvious notation as in ($\ref{cart_diag1}$) for $i =1,2$, together with Lemma \ref{decomp_filtration}. 
\end{proof}

\section*{References}
\begin{biblist}
\bib{Bjork}{book}{
       author={Bj{\"o}rk, Jan-Erik},
       title={Analytic $\Dmod$-modules and applications},  
       series={Mathematics and its Applications},  
       publisher={Kluwer Academic Publishers},
       date={1993},
}  
\bib{Dimca}{book}{
       author={Dimca, Alexandru},
       title={Sheaves in topology},  
       series={Universitext},
       publisher={Springer-Verlag, Berlin},
   date={2004},
}      

\bib{Dimca_et_al}{article}{
 author={Dimca, Alexandru},
   author={Maisonobe, Philippe},
   author={Saito, Morihiko},
   author={Torrelli, Tristan},
   title={Multiplier ideals, $V$-filtrations and transversal sections},
   journal={Math. Ann.},
   volume={336},
   date={2006},
   number={4},
   pages={901--924},
   }

\bib{Dutta}{article}{
      author={Dutta, Yajnaseni},
	title={Vanishing for Hodge ideals on toric varieties},
	journal={in preparation}, 
	date={2018}, 
}
\bib{EV}{book}{
       author={Esnault, H\'el\`ene},
       author={Viehweg, Eckart},
       title={Lectures on vanishing theorems},  
       series={DMV Seminar},  
       volume={20},
       publisher={Birkh\"auser},
       date={1992},
}

\bib{Fulton}{book}{
   author={Fulton, William},
   title={Introduction to toric varieties},
   series={Annals of Mathematics Studies},
   volume={131},
   note={The William H. Roever Lectures in Geometry},
   publisher={Princeton University Press, Princeton, NJ},
   date={1993},
}

\bib{GKKP}{article}{
   author={Greb, Daniel},
   author={Kebekus, Stefan},
   author={Kov{\'a}cs, S{\'a}ndor J.},
   author={Peternell, Thomas},
   title={Differential forms on log canonical spaces},
   journal={Publ. Math. Inst. Hautes \'Etudes Sci.},
   number={114},
   date={2011},
   pages={87--169},
}

\bib{HTT}{book}{
   author={Hotta, R.},
   author={Takeuchi, K.},
   author={Tanisaki, T.},
   title={D-modules, perverse sheaves, and representation theory},
   publisher={Birkh\"auser, Boston},
   date={2008},
}

\bib{Lazarsfeld}{book}{
       author={Lazarsfeld, Robert},
       title={Positivity in algebraic geometry II},  
       series={Ergebnisse der Mathematik und ihrer Grenzgebiete},  
       volume={49},
       publisher={Springer-Verlag, Berlin},
       date={2004},
}      
\bib{MP1}{article}{
      author={Musta\c t\u a, Mircea},
      author={Popa, Mihnea},
	title={Hodge ideals},
	journal={preprint arXiv:1605.08088, to appear in Memoirs of the AMS}, 
	date={2016}, 
}
\bib{MP2}{article}{
      author={Musta\c t\u a, Mircea},
      author={Popa, M.},
      title={Restriction, subadditivity, and semicontinuity theorems for Hodge ideals},
      journal={Int. Math. Res. Not.}, 
      date={2018}, 
      number={11},
      pages={3587--3605},
}

\bib{MP3}{article}{
      author={Musta\c t\u a, Mircea},
      author={Popa, Mihnea},
      title={Hodge ideals for $\QQ$-divisors, $V$-filtration, and minimal exponent},
      journal={preprint arXiv:1807.01935}, 
      date={2018}, 
}

\bib{Popa}{article}{
      author={Popa, Mihnea},
      title={$\Dmod$-modules in birational geometry},
      journal={to appear in the Proceedings of the ICM, Rio de Janeiro}, 
      date={2018}, 
}

\bib{PS}{article}{
      author={Popa, Mihnea}, 
      author={Schnell, Christian}, 
      title={Generic vanishing theory via mixed Hodge modules}, 
      journal={Forum Math. Sigma},
      volume={1},
      date={2013}, 
      pages={60 pp},
}

\bib{Saito-MHP}{article}{
   author={Saito, Morihiko},
   title={Modules de Hodge polarisables},
   journal={Publ. Res. Inst. Math. Sci.},
   volume={24},
   date={1988},
   number={6},
   pages={849--995},
}

\bib{Saito-MHM}{article}{
   author={Sa{}ito, Morihiko},
   title={Mixed Hodge modules},
   journal={Publ. Res. Inst. Math. Sci.},
   volume={26},
   date={1990},
   number={2},
   pages={221--333},
}
\bib{Saito-LOG}{article}{
   author={Saito, Morihiko},
   title={Direct image of logarithmic complexes and infinitesimal invariants of cycles},
   conference={
      title={Algebraic cycles and motives. Vol. 2},
   },
   book={
      series={London Math. Soc. Lecture Note Ser.},
      volume={344},
      publisher={Cambridge Univ. Press, Cambridge},
   },
   date={2007},
   pages={304--318},
   }
   
      \bib{Saito-HF}{article}{
   author={Saito, M.},
   title={On the Hodge filtration of Hodge modules},
   journal={Mosc. Math. J.},
   volume={9},
   date={2009},
   number={1},
   pages={161--191},
}

\bib{Saito-MLCT}{article}{
      author={Saito, Morihiko},
      title={Hodge ideals and microlocal $V$-filtration},
      journal={preprint arXiv:1612.08667}, 
      date={2016}, 
}

\bib{Saito-YPG}{article}{
   author={Saito, M.},
   title={A young person's guide to mixed Hodge modules},
   conference={
      title={Hodge theory and $L^2$-analysis},
   },
   book={
      series={Adv. Lect. Math. (ALM)},
      volume={39},
      publisher={Int. Press, Somerville, MA},
   },
   date={2017},
   pages={517--553},
}

\bib{Viehweg}{article}{
      author={Viehweg, Eckart},
      title={Vanishing theorems},
      journal={J. Reine Angew. Math.},
      number={335},
      date={1982},
      pages={1--8},
}
\bib{Zhang}{article}{
      author={Zhang, Mingyi},
      title={Hodge filtration for $\QQ$-divisors with quasi-homogeneous isolated singularities},
      journal={in preparation}, 
      date={2018}, 
}
\end{biblist}

\end{document}